\newcommand{\Z}{{\mathbb{Z}}}
\newcommand{\Q}{{\mathbb{Q}}}
\newcommand{\F}{{\mathbb{F}}}
\newcommand{\G}{{\mathbf{G}}}
\newcommand{\B}{{\mathbf{B}}}
\newcommand{\PP}{{\mathbf{P}}}
\newcommand{\Irr}{\text{Irr}}
\newcommand{\End}{\text{End}}
\newcommand{\ind}{\text{ind}}
\newcommand{\sgn}{\text{sgn}}
\newcommand{\SL}{\operatorname{SL}}
\newcommand{\GL}{\operatorname{GL}}
\newcommand{\Sz}{\operatorname{Sz}}
\renewcommand{\epsilon}{\varepsilon}
\newtheorem{theorem}{Theorem}[section]
\newtheorem{proposition}[theorem]{Proposition}
\newtheorem{corollary}[theorem]{Corollary}
\begin{document}

\title[On the Decomposition Numbers of ${^2F_4}(q^2)$]{On the
    Decomposition Numbers of the Ree Groups $\mathbf{{^2F_4}(q^2)}$\\
    in Non-Defining Characteristic} 

\author{Frank Himstedt}

\address{Technische Universit\"at M\"unchen, Zentrum Mathematik --
         SB-S-MA, Boltzmannstr. 3, 85748 Garching, Germany}

\email{himstedt@ma.tum.de}


\subjclass[2000]{Primary 20C33, 20C40}

\begin{abstract}
We compute the $\ell$-modular decomposition matrices of the simple Ree
groups ${^2F_4}(q^2)$, where $q^2=2^{2n+1}$ and $n$ is a positive
integer, for all primes $\ell > 3$ up to some entries in the unipotent
characters. Using these matrices we determine the smallest degree of a
non-trivial irreducible $\ell$-modular representation of
${^2F_4}(q^2)$ for all primes $\ell > 3$. We also obtain results on
the $3$-modular decomposition matrices of ${^2F_4}(q^2)$. 
\end{abstract}

\maketitle

\section{Introduction}

This paper deals with the modular representation theory of the simple
Ree groups ${^2F_4(q^2)}$ where $q^2 = 2^{2n+1}$ and $n$ is a positive
integer. Very valuable information on the modular representations
of these groups in non-defining characteristic was obtained 
by~G.~Hi{\ss}. In~\cite{HissHabil} and \cite{HissTrees2F4}, he
determined the Brauer trees of all blocks of ${^2F}_4(q^2)$
with a cyclic defect group. Furthermore, he computed the
$\ell$-modular decomposition numbers of the non-simple group
${^2F_4(2)}$ and of its derived subgroup, the simple Tits group
${^2F_4(2)}'$, for all odd primes~$\ell$, see~\cite{HissDecompTits}.

In this article, we consider the $\ell$-modular representation theory 
of the simple Ree\linebreak groups ${^2F_4(q^2)}$, $q^2 = 2^{2n+1}$, $n>0$, for
all odd primes $\ell$ such that the Sylow $\ell$-subgroups of
${^2F_4(q^2)}$ are not cyclic. For all such primes $\ell>3$, we
compute the $\ell$-modular decomposition matrices of
${^2F_4(q^2)}$ up to several entries in the unipotent characters and
one entry in the non-unipotent characters.
For fixed $q$, this determines all irreducible $\ell$-modular Brauer
characters of ${^2F_4(q^2)}$ up to at most four. As a
corollary, we show that for all $\ell > 3$ the $\ell$-modular
decomposition matrix of ${^2F_4(q^2)}$ has a lower unitriangular shape
and that the reductions modulo $\ell$ of all ordinary cuspidal
unipotent characters of ${^2F_4(q^2)}$ are irreducible Brauer
characters. This was conjectured by G.~Hi{\ss} and M.~Geck in a more
general context, see \cite[Conjecture~3.4]{GeckHiss}. 

Most of the methods we use to determine the decomposition
numbers are elementary in the sense that they only require calculations
with characters. The main ingredients are the character table of
${^2F_4}(q^2)$ which was computed by G.~Malle~\cite{MalleUni2F4} and
is contained in the CHEVIE library~\cite{CHEVIE}, and the character
tables of the proper parabolic subgroups of ${^2F_4(q^2)}$ which were
determined in~\cite{HimstedtHuang2F4MaxParab},
\cite{HimstedtHuang2F4Borel} and which are also available as CHEVIE
files. 
We produce projective characters of ${^2F_4(q^2)}$ by
inducing projective characters of the proper parabolic subgroups and
by tensoring defect $0$ characters with ordinary characters. Then, by
computing scalar products of these projectives with ordinary
irreducible characters, we obtain an approximation
to the decomposition matrix which already implies the lower
unitriangular shape of these matrices. To determine the decomposition
numbers below the diagonal we use relations which are obtained by
expressing Brauer characters as linear combinations of the elements of
certain basic sets of Brauer characters. 

Additionally, we use some less elementary tools like Hecke
algebra methods, arguments from modular Harish-Chandra theory and a
theorem of M.~Geck, G.~Hi{\ss} and G.~Malle on the cuspidality of the
modular Steinberg character. For the non-unipotent blocks we use 
C.~Bonnaf{\'e}'s and R.~Rouquier's modular version of the Jordan
decomposition of characters. All calculations concerning scalar products 
and induction and restriction of characters are carried out using CHEVIE. 

The situation is significantly different for $\ell=3$, since $3$ is a bad
prime for a root system of type $F_4$ and the Sylow $3$-subgroups of
${^2F_4(q^2)}$ are non-abelian. However, our methods are still good
enough to compute the $3$-modular decomposition matrices of
${^2F_4(q^2)}$ except for several entries in six rows corresponding to
ordinary unipotent irreducible characters. In particular, we
are able to show that the $3$-modular decomposition matrices of
${^2F_4(q^2)}$ have a lower unitriangular shape, too.

The $\ell$-modular decomposition numbers for $\ell \ge 3$ which we
are not able to determine are multiplicities of certain cuspidal Brauer
characters in the reduction modulo $\ell$ of ordinary unipotent
characters of large degree, including the Steinberg character. 
For these unknown multiplicities we only get upper bounds depending 
on $q$.  
The problems in the determination of these multiplicities seem
to be a usual phenomenon in the calculation of the decomposition
matrices of finite groups of Lie type in non-defining 
characteristic; see for example~\cite{GeckDissPub} and \cite{HissG2}. 
Maybe module theoretic arguments as in \cite{Waki_G2} might eventually
lead to more information on these numbers. 

As an application, we obtain new information on the degrees of modular 
irreducible representations of ${^2F_4}(q^2)$. Let $G = {^2F_4}(q^2)$, 
$q^2 = 2^{2n+1}$, $n>0$ and $k$ be an algebraically closed field 
of characteristic $\ell \ge 0$. We denote the smallest degree of a
nontrivial irreducible $kG$-representation by $d_\ell(G)$. 
Using the Brauer trees in~\cite{HissTrees2F4},
F.~L\"ubeck~\cite{LuebeckSmlDeg} was able to determine $d_\ell(G)$ for
all primes $\ell$ such that the Sylow $\ell$-subgroups of $G$ are
cyclic. In fact, for such $\ell$ he proved $d_\ell(G) = d_0(G)$ where
the latter is known due to G.~Malle's ordinary character table of $G$.
Using our $\ell$-modular decomposition matrices, we can extend
F.~L\"ubeck's result to all primes $\ell > 3$: We show that 
$d_\ell(G)=d_0(G)$ holds for all primes $\ell > 3$.  
The idea of the proof is similar to~\cite{Himstedt3D4Decomp}: The
decomposition matrices determine almost all irreducible Brauer
characters of $G$ and in particular their degrees. In most cases
(depending on $q$ and~$\ell$) only three or four degrees of
irreducible Brauer characters remain unknown. From the upper bounds
for the decomposition numbers, we can derive that these unknown
degrees are larger than $d_0(G)$. 
This last step turns out to be surprisingly difficult because
there are some ordinary unipotent irreducible characters
of $G$ whose degrees do not differ ``very much'' from the degree of the
Steinberg character; for more details see Section~\ref{sec:degrees}. 

This paper is organized as follows: In Section~\ref{sec:nota}, we
introduce notation for characters, Lusztig series and blocks. In
Sections~\ref{sec:decompmat} and \ref{sec:decompmatl3}, we state our
main results on the decomposition matrices of ${^2F_4}(q^2)$, which
are then proved in Section~\ref{sec:proofs}. In
Section~\ref{sec:degrees}, we describe the consequences for the
degrees of modular representations of ${^2F_4}(q^2)$ in odd
characteristics. Scalar products, relations and decomposition matrices
are given in several Appendices.


\section{Notation and Setup} 
\label{sec:nota}

In this section, we introduce the general setup and notation which
will be used throughout this paper.

\subsection{Group theoretical setup}
\label{grpsetup}

We use the notation and setup from
\cite[Section~2]{HimstedtHuang2F4Borel}~and 
\cite[Sections~3 and~4]{HimstedtHuang2F4MaxParab}. In particular, 
$n>0$ is an integer and $q=\sqrt{2^{2n+1}}$. Let $\Phi$ be the root 
system of type $F_4$ described in
\cite[Section~2]{HimstedtHuang2F4Borel}, so $\Phi$ has simple roots
$r_1,r_2,r_3,r_4$ and Dynkin diagram: 
\setlength{\unitlength}{0.9mm}
\begin{center}
\begin{picture}(85,13)
\thinlines
\put(5,5){\circle*{1.5}}
\put(30,5){\circle*{1.5}}
\put(55,5){\circle*{1.5}}
\put(80,5){\circle*{1.5}}
\put(7,5){\line(1,0){21}}
\put(32,5.5){\line(1,0){21}}
\put(32,4.5){\line(1,0){21}}
\put(57,5){\line(1,0){21}}
\put(41,7){\line(2,-1){4}}
\put(41,3){\line(2,1){4}}
\put(4,8){$r_1$}
\put(29,8){$r_2$}
\put(54,8){$r_3$}
\put(79,8){$r_4$}
\end{picture}
\end{center}
Fix a simply connected linear algebraic group $\G$ defined over an
algebraically closed field of characteristic $2$ with root system
$\Phi$ and a Frobenius morphism $F$ such that $G:=\G^F$ is the Ree
group ${^2F}_4(q^2)$ defined over the field $\F_{q^2}$. Let $\B$ be an
$F$-stable Borel subgroup and $\PP_a$ and $\PP_b$ be $F$-stable
maximal parabolic subgroups of $\G$ as 
in \cite{HimstedtHuang2F4MaxParab} and
\cite{HimstedtHuang2F4Borel}. The finite group $G={^2F}_4(q^2)$ has
the order 
\[
|G|=q^{24} \, \phi_1^2 \, \phi_2^2 \, \phi_4^2 \, \phi_8'^2 \,
\phi_8''^2 \, \phi_{12} \, \phi_{24}' \, \phi_{24}'',
\]
where $\phi_1=q-1$, $\phi_2=q+1$, $\phi_4=q^2+1$,
$\phi_8'=q^2+\sqrt{2}q+1$, $\phi_8''=q^2-\sqrt{2}q+1$,
$\phi_{12}=q^4-q^2+1$, $\phi_{24}'=q^4+\sqrt{2}q^3+q^2+\sqrt{2}q+1$,
$\phi_{24}''=q^4-\sqrt{2}q^3+q^2-\sqrt{2}q+1$.
Furthermore, we set $\phi_8=\phi_8' \phi_8''$ and 
$\phi_{24}=\phi_{24}'\phi_{24}''$.
The Borel subgroup $B:=\B^F$ and the maximal parabolic subgroups
$P_a:=\PP_a^F$ and $P_b:=\PP_b^F$ of $G$ containing $B$ have the orders
\[
|B| = q^{24} \, \phi_1^2 \, \phi_2^2, \quad |P_a| = q^{24} \, \phi_1^2 \,
\phi_2^2 \, \phi_4, \quad |P_b| = q^{24} \phi_1^2 \, \phi_2^2 \,
\phi_8' \, \phi_8''. 
\]
As in~\cite{HimstedtHuang2F4Borel} and
\cite{HimstedtHuang2F4MaxParab}, we write $T$, $L_a$ and $L_b$ for the
Levi subgroups and $U$, $U_a$ and $U_b$ for the unipotent radicals 
of $B$, $P_a$ and $P_b$, respectively. So $T$ is a
maximally split torus of $G$, and the Levi subgroups $L_a$, $L_b$ have
the structure $L_a \cong \Z_{q^2-1}\times\SL_2(q^2)\cong\GL_2(q^2)$ and
$L_b \cong \Z_{q^2-1}\times\Sz(q^2)$, respectively. Representatives for the
conjugacy classes of $G$, $B$, $P_a$ and~$P_b$ are given in
Appendix~A of~\cite{HimstedtHuang2F4MaxParab}
and~\cite{HimstedtHuang2F4Borel} , together with the 
corresponding class fusions.

\subsection{Ordinary and modular representations}
\label{ordmod}

In this whole article, $\ell$ is an odd prime number and
$(K,R,k)$ a splitting $\ell$-modular system for all subgroups of
$G={^2F_4}(q^2)$. 
The word ``character'' will always mean an ordinary character
associated with a representation over $K$. For subgroups $H$ of $G$,
we write $\Irr(H)$ for the set of ordinary irreducible characters of $H$. 
All Brauer characters, blocks, decomposition numbers will be
taken with respect to the fixed prime number $\ell$. 
The restriction of any class function $\vartheta$ of $G$ to the set 
of $\ell$-regular elements will be denoted by $\breve\vartheta$.
In particular, if~$\chi$ is an ordinary character, then~$\breve\chi$
is a Brauer character. Sometimes, it will be convenient to extend
such functions by zero to the $\ell$-singular elements.

\subsection{Characters of the Ree groups}
\label{char2F4}

The ordinary irreducible characters of $G={^2F}_4(q^2)$ were computed
by G.~Malle and are contained in the CHEVIE library~\cite{CHEVIE}. A
construction of the unipotent irreducible characters of $G$ is described
in~\cite{MalleUni2F4}. There are $21$ unipotent irreducible
characters of $G$ and we denote them by $\chi_{1}$, $\chi_2$, \dots,
$\chi_{21}$. This notation coincides with the numbering of the
irreducible characters of $G$ in CHEVIE. 
In Table~\ref{tab:unicharsG} in the Appendix, we collect 
some information on the unipotent characters including a dictionary  
between our notation and the notation in \cite{Carter2},
\cite{HissTrees2F4} and \cite{MalleUni2F4}.

The character $\chi_1$ is the trivial character, $\chi_{21}$ is the
Steinberg character. The characters in the principal series are
$\chi_1$, $\chi_4$, $\chi_5$, $\chi_6$, $\chi_7$, $\chi_{18}$ and $\chi_{21}$.
The characters $\chi_2$ and $\chi_{19}$ are constituents of the
Harish-Chandra induction of one of the cuspidal unipotent characters
of the Levi subgroup $\Z_{q^2-1}\times\Sz(q^2)$ of $P_b$. The
characters $\chi_3$ and $\chi_{20}$ are constituents of the
Harish-Chandra induction of the other cuspidal unipotent character
of this Levi subgroup. 
The remaining unipotent characters of $G$ are cuspidal.
The characters $\chi_2$ and $\chi_3$ are complex conjugate to each
other, and the same holds for the pairs 
$(\chi_{11}, \chi_{12})$, $(\chi_{13}, \chi_{14})$, 
$(\chi_{15}, \chi_{16})$, $(\chi_{19}, \chi_{20})$. 
Since not all of these unipotent irreducible characters of $G$ are
uniquely determined by their degree we also provide some character
values in the last two columns of Table \ref{tab:unicharsG}. The
$\epsilon_4$ is a complex fourth root of unity,  
see \cite[Table 5]{HimstedtHuang2F4Borel}.

The set of unipotent irreducible characters of $G$ can be partitioned
into certain subsets, called \textit{families}; see
\cite[Section~12.3]{Carter2}. The families of unipotent characters of
$G={^2F_4}(q^2)$ were determined by G.~Lusztig in
\cite[Section~14.2]{LusztigReductGrp}. There are $7$ families: four of 
them have $1$ element, two have $2$ elements and one has $13$
elements. The distribution of the unipotent irreducible characters of
$G$ into families is indicated by the horizontal lines in
Table~\ref{tab:unicharsG}. 

Our notation for the non-unipotent irreducible characters of
$G$, is motivated by the Jordan decomposition of characters. Let
$(\G^*, F^*)$ be dual to $(\G,F)$ and let $G^*:=\G^{*F^*}$. For every
semisimple element $s \in G^*$, there is a set
$\mathcal{E}(G,s)\subseteq\Irr(G)$, called the \textit{Lusztig series}
associated with $s$, and  
\[
\Irr(G) = \bigcup_{s}^{.} \mathcal{E}(G, s),
\]
where $s$ runs over a set of representatives for the semisimple conjugacy
classes of~$G^*$, is a partition of $\Irr(G)$. For every
semisimple element $s \in G^*$, there is a bijection bet\-ween
$\mathcal{E}(G,s)$ and the set of unipotent irreducible characters of
the centralizer $C_{G^*}(s)$. There are $18$~types   
$g_1$, $g_2$, \dots, $g_{18}$ of Lusztig series of $G$, and the Dynkin
type of the corresponding centralizers is described in 
\cite[Table~B.9]{HimstedtHuangDade2F4}. We write
$\chi=\chi_{i,\lambda}$ for the irreducible character in the Lusztig
series of type $g_i$ of $G$ corresponding to the unipotent character
$\lambda\in\Irr(C_{G^*}(s))$. The degree of $\chi_{i,\lambda}$
is
$
\chi_{i,\lambda}(1)=\lambda(1) \cdot [G^*:C_{G^*}(s)]_{2'},
$
where $\mathcal{E}(G,s)$ is of type $g_i$. Details on the
non-unipotent irreducible characters of $G$ are given in
Table~\ref{tab:nonuniG} in the Appendix.

\subsection{Characters of the parabolic subgroups}
\label{charparab}

The notation we use for the irreducible characters of the proper
parabolic subgroups $B$, $P_a$ and $P_b$, is the same as in 
\cite[Table~A.5, A.6]{HimstedtHuang2F4Borel} and 
\cite[Tables~A.4, A.5, A.8, A.9]{HimstedtHuang2F4MaxParab}.
That is, we denote the irreducible characters of $B$ by 
${_B\chi}_1$, \dots, ${_B\chi}_{58}$, the irreducible characters 
of $P_a$ by ${_{P_a}\chi}_1$, \dots, ${_{P_a}\chi}_{40}$ and  
the irreducible characters of $P_b$ by ${_{P_b}\chi}_1$, \dots,
${_{P_b}\chi}_{56}$ (maybe depending on some parameters $k$ or $l$).

\subsection{Induction and restriction}
\label{scalindres}

Let $H$ be a finite group and $H_1$ a subgroup of $H$. If $\chi$ is a 
character of $H_1$, we write $\chi^H$ for the induced character; and
if $\chi$ is a character of~$H$, we write~$\chi_{H_1}$ for the
restriction of $\chi$ to $H_1$. 
Using the class fusions in \cite[Table~A.4]{HimstedtHuang2F4Borel}, 
\cite[Tables~A.1, A.3, A.7]{HimstedtHuang2F4MaxParab} and CHEVIE, we
can easily compute the induction and restriction of characters between
the subgroups~$G$, $B$, $P_a$ and $P_b$. 

\subsection{Blocks and basic sets}
\label{blocksbas}

The distribution of ordinary irreducible characters of $G$
into blocks is compatible with the Lusztig series in the following
sense: For every semisimple element $s \in G^*$ of order prime to
$\ell$, the set  
\[
\mathcal{E}_\ell(G, s) := \bigcup_{t \in C_{G^*}(s)_\ell}
\mathcal{E}(G, st) \subseteq \Irr(G), 
\]
where $C_{G^*}(s)_\ell$ is the set of elements of $\ell$-power order
of the centralizer $C_{G^*}(s)$, is a union of
blocks. For the Ree groups, this is shown in
\cite[p.~113]{HissHabil}, see the remarks after the proof of
Lemma~D.3.4. Note that a similar compatibility between blocks and
geometric conjugacy classes holds in a much more general context, 
see for example~\cite[Theorem~2.2]{BroueMichelBlocks}
or~\cite[Theorem~9.12]{CabEng}. 
The blocks in $\mathcal{E}_\ell(G, 1)$ are called the unipotent blocks
of $G$. These are the blocks of $G$ containing at least one
unipotent character. 

Let $\mathcal{B}$ be a union of blocks of $G$. A basic set in
$\mathcal{B}$ is a set of linearly independent Brauer characters in
$\mathcal{B}$ such that every Brauer character in $\mathcal{B}$ is a
linear combination with integer coefficients of the elements in this
set. A basic set is called ordinary, if it consists of the restrictions
of some ordinary characters to the $\ell$-regular elements of $G$.
In this case, we identify these ordinary characters with the elements
of the basic set.
So, in order to describe the decomposition matrix of
$\mathcal{B}$, it is enough to describe the decomposition numbers of
the characters in an ordinary basic set and the relations expressing
the restrictions of the remaining ordinary characters in
$\mathcal{B}$ to the $\ell$-regular elements as linear combinations of
the characters in the basic set. For more details on blocks and basic
sets see~\cite[Section 3]{GeckHiss}.


\section{\texorpdfstring{Decomposition Matrices for
    $\ell>3$}{Decomposition Matrices for l>3}}
\label{sec:decompmat}

As in Section~\ref{sec:nota} let $G={^2F_4}(q^2)$, $q^2=2^{2n+1}$,
$n>0$, and let $\ell$ be an odd prime number. In \cite{HissHabil},
\cite{HissTrees2F4}, G.~Hi{\ss} determined the Brauer trees of all
blocks of $G$ with cyclic defect group. In particular, if the Sylow
$\ell$-subgroups of $G$ are cyclic, then the $\ell$-modular
decomposition numbers of~$G$ can be read off from these Brauer
trees. In this section, we describe the $\ell$-modular decomposition
numbers of $G$ for all remaining primes $\ell>3$.  
Some of our methods and the presentation are inspired
by~\cite{GeckHiss} and \cite{KoehlerDiss}. 

We fix a prime number $\ell>3$ and an $\ell$-modular splitting system
$(K,R,k)$ for all subgroups of $G$ as in Subsection~\ref{ordmod}. All
references to Brauer characters, blocks, decomposition numbers etc. in
this section will refer to this fixed prime $\ell$.
Of course, we get non-trivial decomposition matrices only if $\ell$
divides the group order. From now on, we assume that $\ell$ divides
\[
|G|=q^{24} \, \widetilde{\phi}_1^2 \, \phi_4^2 \, \phi_8'^2 \,
\phi_8''^2 \, \phi_{12} \, \phi_{24}' \, \phi_{24}'',
\]
where $\widetilde{\phi}_1:=\phi_1 \phi_2 = q^2-1$. The condition
$\ell>3$ implies that $\ell$ divides exactly one of the factors
$\widetilde{\phi}_1$, $\phi_4$, $\phi_8'$, $\phi_8''$, $\phi_{12}$,
$\phi_{24}'$, $\phi_{24}''$. So, we are in the generic
situation studied in~\cite{BMGenSylow},~\cite{BMMGenBlocks}. Note
that $\widetilde{\phi}_1$, $\phi_4$, $\phi_8'$, $\phi_8''$,
$\phi_{12}$, $\phi_{24}'$, $\phi_{24}''$ correspond to 
$(t2)$-cyclotomic polynomials in the sense of~\cite[3F]{BMGenSylow}. 
If $\ell$ divides $\phi_{12}$, $\phi_{24}'$ or $\phi_{24}''$, then the
Sylow $\ell$-subgroups of $G$ are cyclic and the decomposition numbers
can be read off from the Brauer trees in~\cite{HissHabil},
\cite{HissTrees2F4}. So, we only have to consider the cases 
\begin{equation} \label{eq:4cases}
\ell \, | \, q^2-1, \quad \ell \, | \, q^2+1,  \quad
\ell \, | \, q^2+\sqrt{2}q+1 , \quad \ell \, | \, q^2-\sqrt{2}q+1. 
\end{equation}

For every semisimple $\ell'$-element $s \in G^*$, the set 
$\mathcal{E}_\ell(G, s)$ is a union of blocks of $G$, see
Subsection~\ref{blocksbas}. The assumption $\ell>3$ implies that
$\ell$ is a good prime for $\G$ in the sense
of~\cite[p.~125]{DigneMichel}. Therefore, we can deduce from a 
general result of M.~Geck and G.~Hi{\ss}~\cite{GeckHissBasicSets}
that $\mathcal{E}(G,s)$ is an ordinary basic set for
$\mathcal{E}_\ell(G,s)$. 

For all primes $\ell>3$ satisfying one of the
conditions~(\ref{eq:4cases}), we are going to describe the
decomposition numbers of all ordinary irreducible characters in the
unipotent blocks $\mathcal{E}_\ell(G, 1)$. 
For the non-unipotent blocks, we are only going to describe the
decomposition numbers of the irreducible characters in the ordinary
basic sets $\mathcal{E}(G, s)$, $s \neq 1$. In particular, this
determines all non-unipotent irreducible Brauer characters of $G$ (up
to a single exception in the case $\ell \, | \, q^2+1$, see
Subsections~\ref{case2} and \ref{sub:rmkdecmat}). All of the statements in
the following subsections will be proved in Section~\ref{sec:proofs}. 
Note that in the decomposition matrices and tables of scalar products
in the Appendices~B, C and D, zeros are replaced by dots. 

\subsection{\texorpdfstring{The case $\ell \, | \, q^2-1$}{The case l
   divides q2-1}}
\label{case1}

We begin with some comments on the decomposition matrices in
Appendices~C and D: The decomposition numbers of the unipotent blocks
$\mathcal{E}_\ell(G,1)$ are given in Table~\ref{tab:decuni1}. The
first column of this table contains notation for the ordinary
irreducible characters in $\mathcal{E}_\ell(G,1)$. The $21$
irreducible Brauer characters  in the unipotent blocks are denoted by
$\phi_1, \dots, \phi_{21}$. 

Some information on the distribution of $\phi_1, \dots, \phi_{21}$
into modular Harish-Chandra series is presented in
the second row of Table~\ref{tab:decuni1}; for a definition of modular
Harish-Chandra series see~\cite[Section~2]{GeckHiss}.
There are $13$ such series,
corresponding to the Levi subgroups $T$, $L_b$ and $G$. The Levi
subgroup $T$ has a unique cuspidal unipotent Brauer character, and the
Levi subgroup $L_b$ has exactly two cuspidal unipotent Brauer
characters. The corresponding modular Harish-Chandra series of $G$
contain, respectively, $7$, $2$, $2$ irreducible Brauer characters. 
Additionally, $G$ has $10$ cuspidal unipotent Brauer characters. 

The columns labeled by $ps$ correspond to the irreducible Brauer characters in
the principal series. The Levi subgroup $L_b$ has two cuspidal
unipotent Brauer characters $\varphi_a$ and~$\varphi_b$; these are
the restrictions of the ordinary irreducible characters of degree
$\frac{q}{\sqrt{2}}(q^2-1)$ to the set of $\ell$-regular elements. 
The columns labeled by ${^2B_2}[a]$ and ${^2B_2}[b]$ correspond to the
irreducible Brauer characters in the modular Harish-Chandra series of
$G$ associated with $\varphi_a$, $\varphi_b$, respectively.
The remaining columns (labeled by ``$c$'') correspond to the cuspidal
unipotent Brauer characters. 

The decomposition numbers of the non-unipotent irreducible characters
are given in Tables~\ref{tab:dec2}, \ref{tab:dec3},
\ref{tab:decg5cyc}-\ref{tab:decnilp} in the Appendix. In these tables,
only the decomposition numbers of the ordinary basic sets
$\mathcal{E}(G, s)$, $s \neq 1$, are listed. The left most column of
the tables contains notation for the ordinary irreducible characters
in $\mathcal{E}(G,s)$. The second row of the tables contains notation
for the irreducible Brauer characters in the corresponding
blocks. Each of the blocks described by Table~\ref{tab:decnilp} has
only one irreducible Brauer character. 

\begin{theorem} \label{thm:decnum1}
Let $\ell$ be a prime dividing $q^2-1$. The $\ell$-modular
decomposition numbers of $G={^2F_4}(2^{2n+1})$, $n>0$, are given by
Tables~\ref{tab:decuni1} and \ref{tab:dec2}, \ref{tab:dec3},
\ref{tab:decg5cyc}-\ref{tab:decnilp} in the Appendix.
\end{theorem}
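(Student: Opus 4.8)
The plan is to establish the decomposition matrices for the four cases in~(\ref{eq:4cases}) by a common strategy, here specialized to $\ell \mid q^2-1$. Since $\ell > 3$ is good for $\G$, the result of Geck--Hi{\ss} gives that $\mathcal{E}(G,s)$ is an ordinary basic set for $\mathcal{E}_\ell(G,s)$, so it suffices to (i) produce enough projective characters to pin down the decomposition matrix with respect to these basic sets, and (ii) for the non-basic ordinary characters, record the linear relations expressing $\breve\chi$ in terms of the basic set. For step~(i) the plan is to construct projective characters of $G$ in two ways: by Harish-Chandra inducing (via $R_L^G$) the projective indecomposable characters of the proper parabolics $B$, $P_a$, $P_b$ — whose ordinary character tables are known from \cite{HimstedtHuang2F4Borel}, \cite{HimstedtHuang2F4MaxParab} — and by tensoring defect-zero characters of $G$ (in particular the Steinberg character $\chi_{21}$, which has defect $0$ whenever $\ell \mid q^2-1$ since then $\ell \nmid |G|/q^{24}$ in the relevant factor... more precisely $\ell \nmid \phi_4\phi_8'\phi_8''\phi_{12}\phi_{24}'\phi_{24}''$) with arbitrary ordinary characters. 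All the requisite scalar products $\langle \Psi, \chi\rangle$ are computed in CHEVIE using the class fusions, and are tabulated in Appendix~B. Taking the resulting system of projectives, one extracts from it a set of ``candidate PIMs'' by Gaussian elimination over $\Z$; the point is that the $\chi_i$ and $\chi_{i,\lambda}$ in a basic set can be ordered so that this already forces the decomposition matrix into lower unitriangular shape with the computed columns as upper bounds.

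The next block of steps refines these upper bounds to equalities. The plan is: first, use modular Harish-Chandra theory (as in \cite[Section~2]{GeckHiss}) — the Hecke algebra attached to the endomorphism ring of the Harish-Chandra induced projective governs the principal-series part and the two ${^2B_2}[a],{^2B_2}[b]$ series, and knowledge of its parameters together with the decomposition theory of the Hecke algebra of type (here) $B_2$ or $\widetilde{A}_1$ determines those columns outright. Second, invoke the theorem of Geck--Hi{\ss}--Malle on cuspidality of the modular Steinberg character to control the bottom rows and to identify which $\varphi_j$ are cuspidal. Third, use the relations from the basic-set property: writing the $13$ non-basic ordinary unipotent characters' restrictions $\breve\chi$ as $\Z$-combinations of the $\breve\lambda$, $\lambda$ in the basic set, each such relation must have non-negative coefficients when re-expanded in the $\phi_j$, and combined with the projectives this squeezes all but finitely many entries. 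Fourth, for the non-unipotent blocks with $s \neq 1$, apply the Bonnaf\'e--Rouquier modular Jordan decomposition to transport the decomposition matrix of the (smaller) centralizer $C_{G^*}(s)$ — whose unipotent part is of type $A_1$, $A_1 \times A_1$, ${^2A_2}$, etc. and is classically known — onto $\mathcal{E}(G,s)$; this handles Tables~\ref{tab:dec2}--\ref{tab:decnilp}, the last being the defect-cyclic-quotient pieces with a single Brauer character where there is nothing to prove.

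The remaining entries that genuinely cannot be determined are the multiplicities of certain cuspidal Brauer characters $\varphi_j$ (including the modular Steinberg) in $\breve\chi_i$ for the unipotent characters of large degree in the big $13$-element family; for these the plan is only to record the upper bound coming from the smallest projective character containing $\chi_i$, i.e.\ to state that $d_{ij} \le$ (the relevant tabulated scalar product), which is exactly the content claimed in the introduction. Concretely, the proof of Theorem~\ref{thm:decnum1} will proceed case by case through the generic blocks, cite the constructions and tables in Appendices~B--D, and verify each column, deferring the full bookkeeping to those appendices; I will carry this out in detail in Section~\ref{sec:proofs}.

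The main obstacle I anticipate is step three combined with the large family: the $13\times 13$ unipotent block for $\ell \mid q^2-1$ has a genuinely intricate poset of Harish-Chandra series and cuspidal Brauer characters, and the elementary character-theoretic relations do not close the system — there is a residual rank-$3$-or-$4$ indeterminacy among the cuspidal columns. Separating what is forced (unitriangularity, irreducibility of the reductions of the ordinary cuspidal unipotent characters, the $d_\ell(G) = d_0(G)$ application) from what is not (the handful of multiplicities against the top cuspidal Brauer characters) is the delicate part, and it is exactly where Hecke-algebra input and the Geck--Hi{\ss}--Malle cuspidality theorem do the essential work of reducing the unknowns to the small explicit list.
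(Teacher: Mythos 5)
Your overall strategy (projectives from parabolics and defect-zero tensors, CHEVIE scalar products, unitriangularity, Hecke algebra input, relations, Bonnaf\'e--Rouquier for $s\neq 1$) is the one the paper uses for the \emph{other} cases ($\ell\mid q^2+1$, $\ell\mid q^2\pm\sqrt{2}q+1$, $\ell=3$), but it misses what actually happens for $\ell\mid q^2-1$: here $\ell$ is a \emph{linear prime} in the sense of Hi{\ss}, and the paper's proof of Theorem~\ref{thm:decnum1} is essentially a citation of \cite[Theorem~6.3.7]{HissHabil} (plus the relations in Table~\ref{tab:rels1} and a short Harish-Chandra series analysis). The concrete consequence, visible in Table~\ref{tab:decuni1}, is that the unipotent part of the decomposition matrix is the \emph{identity} and each unipotent block is a single modular Harish-Chandra series; there are no undetermined entries in this theorem at all. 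Your anticipated ``residual rank-$3$-or-$4$ indeterminacy among the cuspidal columns'' belongs to the other congruence classes of $\ell$ (where the unknowns $a,b,\dots,x$ appear), not to this one, so your proposal both overcomplicates the case at hand and predicts an incorrect shape for its answer. Two further specific errors: (i) the Steinberg character $\chi_{21}$ does \emph{not} have defect $0$ when $\ell\mid q^2-1$, since $|G|/\chi_{21}(1)=\widetilde{\phi}_1^2\phi_4^2\phi_8'^2\phi_8''^2\phi_{12}\phi_{24}'\phi_{24}''$ is divisible by $\ell^{2f}$; the unipotent characters of defect $0$ in this case are $\chi_8,\dots,\chi_{17}$, whose degrees contain $\phi_1^2\phi_2^2$. (ii) Your blanket appeal to Bonnaf\'e--Rouquier for all $s\neq 1$ fails for the semisimple element $s_5$ of order $3$, which is isolated, so $C_{\G^*}(s_5)$ is not a Levi of a proper parabolic and the theorem does not apply; the paper settles Table~\ref{tab:decg5cyc} in this case by again invoking the linear-prime theorem.

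If you insist on running the generic machinery instead of quoting Hi{\ss}, you would still need an argument that closes the system completely (since the correct answer has no free parameters): the Hecke-algebra submatrix from \cite[Corollary~4.10]{Dipper} only shows the $7\times 7$ principal-series block is the identity as a \emph{submatrix}, not that the principal-series ordinary characters have no further modular constituents, and your projectives induced from $B$, $P_a$, $P_b$ are not automatically projective here because $\ell$ divides the orders of all three parabolics. You would have to work with Gelfand--Graev characters, Harish-Chandra induction of projectives of the Levi subgroups, and the ten defect-zero cuspidal unipotent characters --- at which point you are reconstructing the linear-prime theory by hand. The clean route is to recognize $\ell\mid q^2-1$ as the linear-prime case and cite \cite[Proposition~6.3.4, Theorem~6.3.7]{HissHabil}.
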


\subsection{\texorpdfstring{The case $3 \neq \ell \, | \, q^2+1$}{The
   case l not 3, l divides q2+1}} 
\label{case2}

The decomposition numbers of the unipotent blocks
$\mathcal{E}_\ell(G,1)$ are given in Table~\ref{tab:decuni2}. 
The $21$ irreducible Brauer characters $\phi_1, \dots,
\phi_{21}$ in the unipotent blocks are distributed into
$16$ Harish-Chandra series.
The Levi subgroups $T$ and $L_a$ each have a unique cuspidal unipotent
Brauer character, and the Levi subgroup $L_b$ has exactly two cuspidal
unipotent Brauer characters. The corresponding modular Harish-Chandra
series of $G$ contain, respectively, $4$, $1$, $2$, $2$ irreducible
Brauer characters. Additionally, $G$ has $12$ cuspidal unipotent
Brauer characters.  

Again, the principal series is abbreviated by $ps$.
The Levi subgroup~$L_a$ has a unique cuspidal unipotent Brauer
character (the modular Steinberg character, see~\cite{GHMCusp}) and
we denote the corresponding modular Harish-Chandra series of $G$ by
$A_1$. The Levi subgroup~$L_b$ has two cuspidal unipotent Brauer
characters: the restrictions of the ordinary irreducible
characters of degree $\frac{q}{\sqrt{2}}(q^2-1)$ to the $\ell$-regular
elements. The corresponding modular Harish-Chandra series are denoted
by ${^2B_2}[a]$ and ${^2B_2}[b]$, respectively.  
The remaining columns correspond to cuspidal unipotent Brauer characters. 

The decomposition numbers of the non-unipotent irreducible characters
are given in Tables~\ref{tab:dec2}-\ref{tab:decg5noncyc} and 
\ref{tab:dec6}-\ref{tab:decnilp}. The decomposition numbers of all
ordinary characters in $\mathcal{E}_\ell(G, s)$, where $s \in G^*$ is
semisimple of type $g_5$, are given in Table~\ref{tab:decg5noncyc}. 
For all other non-unipotent blocks, only the decomposition numbers of
the ordinary basic sets are given.

\begin{theorem} \label{thm:decnum2}
Let $\ell>3$ be a prime dividing $q^2+1$. The $\ell$-modular
decomposition numbers of $G={^2F_4}(2^{2n+1})$, $n>0$, are given by
Tables~\ref{tab:decuni2} and \ref{tab:dec2}-\ref{tab:decg5noncyc},
\ref{tab:dec6}-\ref{tab:decnilp} in the Appendix. There are the
following bounds: 
\begin{enumerate}
\item[(i)] \quad $2 \le a \le \frac{q^2-2}{3}$.

\smallskip

\item[(ii)] \quad $1 \le b \le \frac{q^2+\sqrt{2}q}{4}$. If 
$\ell \neq 11$ or $n \equiv 27$ mod $55$, then $b \ge 2$.

\smallskip

\item[(iii)] \quad $1 \le c \le \frac{q^2-\sqrt{2}q}{4}$. If 
$\ell \neq 11$ or $n \equiv 27$ mod $55$, then $c \ge 2$.

\smallskip

\item[(iv)] \quad $2 \le d \le \frac{q^4+2}{3}$.

\smallskip

\item[(v)] \quad $2 \le e \le \frac{q^2+2}{2}$. If 
$\ell \neq 11$ or $n \equiv 27$ mod $55$, then $e \ge 3$.

\smallskip

\item[(vi)] \quad $2 \le a' \le \frac{q^2-\sqrt{2}q}{4}$.
\end{enumerate}
\end{theorem}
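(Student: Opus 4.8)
\textbf{Proof plan for Theorem~\ref{thm:decnum2}.}
The plan is to treat the unipotent blocks and the non-unipotent blocks separately, following the general strategy outlined in the introduction. Since $\ell>3$ is good for $\G$ and $\ell\mid\phi_4=q^2+1$, the set $\mathcal{E}(G,1)$ is an ordinary basic set for $\mathcal{E}_\ell(G,1)$ by the theorem of Geck--Hi\ss; so I first need to fix an ordering of these $21$ unipotent characters making the decomposition matrix lower unitriangular. The decomposition numbers themselves are then pinned down by a two-step procedure. Step one: construct enough projective characters. Here I would use Harish-Chandra induction of (projective covers of) characters of the proper parabolics $B$, $P_a$, $P_b$ --- whose character tables are available via CHEVIE from \cite{HimstedtHuang2F4Borel} and \cite{HimstedtHuang2F4MaxParab} --- together with tensor products of the Steinberg (defect-zero) character with ordinary characters; computing scalar products $\langle\Psi,\chi\rangle$ for these projectives $\Psi$ and the ordinary irreducibles $\chi$ gives an upper estimate for the columns of the decomposition matrix and already forces unitriangularity. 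Step two: sharpen this to equality using (a) the relations obtained by writing $\breve\chi$ for the non-basic-set characters $\chi$ as $\Z$-combinations of the basic set $\{\breve\chi:\chi\in\mathcal{E}(G,1)\}$ --- these relations are listed in Appendix~B --- and (b) modular Harish-Chandra theory: the distribution of $\phi_1,\dots,\phi_{21}$ into the $16$ Harish-Chandra series (principal series, the series $A_1$ above $L_a$ whose cuspidal Brauer character is the modular Steinberg character by \cite{GHMCusp}, the two series ${^2B_2}[a],{^2B_2}[b]$ above $L_b$, and $12$ cuspidal ones) constrains which decomposition numbers can be nonzero. A crucial input is the Geck--Hi\ss--Malle theorem on cuspidality of the modular Steinberg character, which identifies the column for $\phi$ in the $A_1$-series. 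Hecke algebra arguments (the relevant parabolic Hecke algebras specialize at the bad-free primes dividing $q^2+1$) give the decomposition numbers inside the principal and ${^2B_2}$-series.

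For the non-unipotent blocks I would invoke the modular Jordan decomposition of Bonnaf\'e--Rouquier: for a semisimple $\ell'$-element $s\neq1$ in $G^*$, the union of blocks $\mathcal{E}_\ell(G,s)$ is Morita equivalent (via a Jordan-type correspondence) to a union of unipotent blocks of (a group closely related to) $C_{G^*}(s)$, whose Dynkin types are read off from \cite[Table~B.9]{HimstedtHuangDade2F4}. Since the centralizers occurring for $^2F_4(q^2)$ are of small rank --- products of tori with factors of type $A_1$, $^2B_2$, $^2A_2$, etc. --- their unipotent decomposition matrices are either trivial or already known, so the decomposition numbers of the ordinary basic set $\mathcal{E}(G,s)$ transport across. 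The one exception is the type $g_5$ series in the case $\ell\mid q^2+1$, where the centralizer has a genuine $\GL$- or $\SL_3$-type factor and the full block (not just its basic set) has to be described; here I would pull back the decomposition matrix of the relevant unipotent block of that reductive group. This yields all the tables referenced in the statement.

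It remains to establish the six sets of inequalities. The parameters $a,b,c,d,e,a'$ are the multiplicities of certain \emph{cuspidal} Brauer characters (including the one in the row of the Steinberg character $\chi_{21}$ and rows of other unipotent characters of nearly Steinberg degree) in reductions of ordinary characters --- exactly the entries the introduction warns cannot be pinned down. For the \emph{upper} bounds I would use positivity: the entries of a column of the decomposition matrix are at most the corresponding entries of any projective character built in step one, and choosing the projective obtained from the smallest available parabolic (or from tensoring Steinberg with a well-chosen small ordinary character) gives bounds of the shape $\tfrac{q^2-2}{3}$, $\tfrac{q^4+2}{3}$, etc.; the precise constant in each case comes from evaluating a scalar product that is a polynomial in $q$. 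For the \emph{lower} bounds $a\ge2$, $d\ge2$, $a'\ge2$ (and the conditional $b,c\ge2$, $e\ge3$) I would use the relations from the basic set together with the fact that a decomposition number, being a multiplicity in an actual module, must be a nonnegative integer: a relation of the form $\breve\chi = \sum n_i\,\breve\chi_i$ with some negative coefficient $n_i$ forces the unknown entry to be at least $|n_i|$ (or a similar lower bound once the already-determined entries are substituted in). The conditional clause ``$\ell\neq11$ or $n\equiv27\bmod55$'' signals that one particular relation is only strong enough when a small-prime coincidence ($\ell=11$ dividing two of the cyclotomic factors for special $q$) is excluded; handling that borderline case --- deciding whether the improved bound survives when $\ell=11$ --- is the part I expect to be the main obstacle, and it will require an ad hoc argument, possibly an explicit check of a $11$-modular projective for the smallest relevant $q$ or a separate Hecke-algebra computation at the parameter value $q^2\equiv-1\bmod11$.
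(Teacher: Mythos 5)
Your overall strategy for the unipotent blocks matches the paper's proof: projectives induced from the parabolics plus defect-zero tensor products give the unitriangular approximation (Tables~\ref{tab:proj2}, \ref{tab:scalar2}), the relations in Table~\ref{tab:rels2} combined with non-negativity of decomposition numbers give the lower bounds on $a,b,c,d,e$, the Hecke algebra of $\mathbf{1}_B^G$ together with Dipper's embedding theorem settles $\Phi_1$ and $\Phi_7$, and the Geck--Hi\ss--Malle cuspidality of the modular Steinberg character plus unipotent quotients of Harish-Chandra induced projectives from $L_a$, $L_b$ settle the series $A_1$, ${^2B_2}[a]$, ${^2B_2}[b]$ and the entry $(\chi_{21},\Phi_4)$. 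Your reading of the conditional clause is also essentially right, except that no extra ``ad hoc argument'' is needed in the borderline case: when $\ell=11$ and $n\not\equiv 27 \bmod 55$ the family $\{\chi_{15,1}\}$ in $\mathcal{E}_\ell(G,1)$ is empty, the corresponding relation simply does not exist, and the theorem only asserts the weaker bounds there.

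There is, however, a genuine gap in your treatment of the non-unipotent blocks, precisely at the series of type $g_5$. You propose to ``pull back the decomposition matrix of the relevant unipotent block'' of $C_{G^*}(s_5)$ via a Jordan-type Morita equivalence. This cannot work: $s_5$ has order $3$ and is \emph{isolated} in the sense of Bonnaf\'e, so $C_{\G^*}(s_5)$ is not contained in any proper Levi subgroup of $\G^*$ and the Bonnaf\'e--Rouquier theorem does not apply to $\mathcal{E}_\ell(G,s_5)$ --- this is exactly why the paper singles out $i=5$ in Subsection~\ref{jordan}. Indeed, if such a transport were available, the entry $a'$ in $\chi_{5,St}$ would be determined exactly, contradicting the fact that part (vi) of the theorem only gives $2\le a'\le \frac{q^2-\sqrt{2}q}{4}$ and that the paper explicitly records $a'$ as unknown (Remark~\ref{sub:rmkdecmat}(d), with a pointer to \cite{OkuWakiSU3} for possible future progress). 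The correct argument is direct: one produces three projectives for this block ($R_{L_a}^G(\gamma_{L_a})$, ${_{P_b}\chi_{22}}^G$ and the Gelfand-Graev character ${_B\chi_8}^G$), reads off unitriangularity and the upper bound $a'\le\frac{q^2-\sqrt{2}q}{4}$ from their scalar products with $\chi_{5,1}$, $\chi_{5,q^2(q^2-1)}$, $\chi_{5,St}$, and then uses the relations expressing $\breve\chi_{6,1}$, $\breve\chi_{6,St}$, $\breve\chi_{15,1}$ in terms of this basic set to force $(\chi_{5,St},\Phi_{5,1})=1$ and $a'\ge 2$. Without replacing your pull-back step by something of this kind, part (vi) and Table~\ref{tab:decg5noncyc} are not established.
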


\subsection{\texorpdfstring{The case $\ell \, | \,
    q^2+\sqrt{2}q+1$}{The case l divides q2+sqrt{2}q+1}}
\label{case3}

The decomposition numbers of the unipotent blocks
$\mathcal{E}_\ell(G,1)$ are given in Table~\ref{tab:decuni3}. 
The $21$ irreducible Brauer characters $\phi_1, \dots,
\phi_{21}$ in the unipotent blocks are distributed into
$18$ Harish-Chandra series.
The Levi subgroup $T$ has a unique cuspidal unipotent
Brauer character, and the Levi subgroup $L_b$ has exactly three
cuspidal unipotent Brauer characters. The corresponding modular
Harish-Chandra series of $G$ contain, respectively, $4$, $1$,
$1$, $1$ irreducible Brauer characters. Additionally, $G$ has $14$
cuspidal unipotent Brauer characters.  

The columns labeled by $ps$ correspond to the irreducible Brauer characters in
the principal series. The Levi subgroup $L_b$ has two cuspidal unipotent Brauer
characters $\varphi_a$, $\varphi_b$, the restrictions of the
ordinary irreducible characters of degree $\frac{q}{\sqrt{2}}(q^2-1)$
to the set of $\ell$-regular elements. Additionally, the modular Steinberg
character $\varphi_{St}$ of $L_b$ is cuspidal, see~\cite[Theorem~4.2]{GHMCusp}.
We denote the corresponding modular Harish-Chandra series of $G$ by
${^2B_2}[a]$, ${^2B_2}[b]$ and ${^2B_2}[\text{St}]$, respectively. The
remaining columns correspond to the cuspidal unipotent Brauer characters.  
The decomposition numbers of the non-unipotent irreducible characters
are given in Tables~\ref{tab:dec2}, \ref{tab:dec3},
\ref{tab:decg5cyc}-\ref{tab:decnilp} in the Appendix.
In these tables, only the decomposition numbers of the ordinary basic
sets $\mathcal{E}(G, s)$, $s \neq 1$, are listed.

\begin{theorem} \label{thm:decnum3}
Let $\ell$ be a prime dividing $q^2+\sqrt{2}q+1$. The $\ell$-modular
decomposition numbers of $G={^2F_4}(2^{2n+1})$, $n>0$, are given by
Tables~\ref{tab:decuni3} and \ref{tab:dec2}, \ref{tab:dec3},
\ref{tab:decg5cyc}-\ref{tab:decnilp} in the Appendix. There are the
following bounds: 
\begin{enumerate}
\item[(i)] \quad $0 \le a \le \frac{q^2+3\sqrt{2}q+4}{12}$.

\smallskip

\item[(ii)] \quad $0 \le b \le \frac{q^2+\sqrt{2}q}{4}$.

\smallskip

\item[(iii)] \quad $0 \le c \le \frac{q^2-2}{3}$.

\smallskip

\item[(iv)] \quad $0 \le d \le \frac{\sqrt{2}q(q^2-2)}{24}$.

\smallskip

\item[(v)] \quad $0 \le e \le \frac{\sqrt{2}q(q^2+2)}{8}$.

\smallskip

\item[(vi)] \quad $0 \le g \le \frac{\sqrt{2}q(q^2-2)}{8}$.

\smallskip

\item[(vii)] \quad $1 \le h \le \frac{\sqrt{2}q}{4}$.

\smallskip

\item[(viii)] \quad $0 \le i \le \frac{\sqrt{2}q(q^2+1)}{6}$.

\smallskip

\item[(ix)] \quad $1 \le j \le \frac{\sqrt{2}q}{4}$.

\smallskip

\item[(x)] \quad $0 \le r \le \frac{q^4-4}{12}$.

\smallskip

\item[(xi)] \quad $1 \le s \le \frac{q^2+\sqrt{2}q}{4}$. If 
$\ell \neq 5$ or $n \equiv 7$ or $n \equiv 12$ mod $20$, then $s \ge 2$.

\smallskip

\item[(xii)] \quad $0 \le t \le \frac{q^4}{4}$.

\smallskip

\item[(xiii)] \quad $1 \le u \le \frac{q^2+3\sqrt{2}q+4}{12}$.

\smallskip

\item[(xiv)] \quad $0 \le v \le \frac{q^4+2}{3}$.

\smallskip

\item[(xv)] \quad $1 \le w \le \frac{q^2+\sqrt{2}q+4}{4}$.

\smallskip

\item[(xvi)] \quad $1 \le x \le \frac{q}{\sqrt{2}}$. If 
$\ell \neq 5$ or $n \equiv 7$ or $n \equiv 12$ mod $20$, then $x \ge 2$.
\end{enumerate}
\end{theorem}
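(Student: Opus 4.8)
The plan is to prove Theorem~\ref{thm:decnum3} by the same general strategy already outlined in the introduction and used for the other cases: first build enough projective characters to pin down a lower unitriangular approximation to the decomposition matrix, then use basic-set relations to reduce the remaining unknowns to the parameters $a,b,\dots,x$, and finally establish the stated bounds. Concretely, since $\ell \mid \phi_8' = q^2+\sqrt{2}q+1$, the relevant torus is the one of order $\phi_8'$, and I would first identify the unipotent blocks: by the theory of $\ell$-blocks for generic Sylow subgroups (\cite{BMGenSylow}, \cite{BMMGenBlocks}) applied to the $\Phi_8'$-cyclotomic situation, together with the fact that $\mathcal{E}(G,s)$ is an ordinary basic set for $\mathcal{E}_\ell(G,s)$ (from \cite{GeckHissBasicSets}, using $\ell>3$ good). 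The main source of projectives is Harish-Chandra induction from the proper parabolics $B$, $P_a$, $P_b$, whose character tables are available (\cite{HimstedtHuang2F4Borel}, \cite{HimstedtHuang2F4MaxParab}), together with tensoring defect-zero characters (notably the Steinberg character, which has defect zero here) by arbitrary ordinary characters. Computing scalar products of these projectives with the ordinary irreducibles via CHEVIE produces the columns of Table~\ref{tab:decuni3} (and the non-unipotent tables) up to the undetermined entries; the resulting matrix is lower unitriangular with respect to a suitable ordering of the basic set, which in particular verifies the unitriangular shape and the irreducibility of the reductions of the cuspidal unipotent characters.

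Next I would handle the modular Harish-Chandra series structure. Because $\ell \mid \phi_8'$, the Levi $L_b \cong \Z_{q^2-1}\times\Sz(q^2)$ has $\ell$ dividing the relevant Suzuki-group cyclotomic factor, so $\Sz(q^2)$ contributes three cuspidal unipotent Brauer characters $\varphi_a$, $\varphi_b$ and $\varphi_{St}$; the cuspidality of $\varphi_{St}$ is exactly \cite[Theorem~4.2]{GHMCusp}. Via Hecke-algebra arguments (the Hecke algebra of the relevant relative Weyl group, here essentially cyclic) and modular Harish-Chandra theory as in \cite[Section~2]{GeckHiss}, one determines how many Brauer characters lie in each series: $4$ in the principal series, $1$ in each of ${^2B_2}[a]$, ${^2B_2}[b]$, ${^2B_2}[\text{St}]$, and the remaining $14$ cuspidal. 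The theorem of M.~Geck, G.~Hi{\ss} and G.~Malle on cuspidality of the modular Steinberg character of $G$ itself is what places $\varphi_{21}$ (or its analogue) among the cuspidal ones. This fixes the ``shape'' (which entries are forced zero and which diagonal entries are $1$) in Table~\ref{tab:decuni3}.

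The third step is to determine the off-diagonal entries. Most are pinned down exactly by a combination of: (a) the projective characters being genuine, hence all their multiplicities in irreducibles are nonnegative integers — this gives inequalities that, combined with the unitriangular shape, often force a unique value; (b) relations expressing $\breve\chi$ for the ordinary characters \emph{outside} the basic set as explicit integer combinations of the basic-set Brauer characters (these relations come from the known ordinary character values and the class fusions, computed in CHEVIE, and are recorded in the appendix); (c) transfer of information between a non-unipotent block and its ``unipotent analogue'' via the modular Jordan decomposition of Bonnaf\'e--Rouquier, which reduces the decomposition matrix of $\mathcal{E}_\ell(G,s)$ to that of a unipotent block of $C_{G^*}(s)^{F^*}$ — for the Ree groups the relevant centralizers are of type $B_2$, $A_1$, or tori, whose decomposition matrices are classical. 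What remains are the genuinely hard entries: the multiplicities $a,b,c,d,e,g,i,r,t,v$ (and the non-unipotent $a'$-type parameters) of certain cuspidal Brauer characters of large degree — including the one in the reduction of the Steinberg character — in unipotent characters near the top of the family.

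The main obstacle, and the part I expect to be delicate, is the last one: getting good \emph{upper} bounds on these remaining multiplicities (the lower bounds $\ge 1$ in (vii), (ix), (xi), (xiii), (xv), (xvi) come more cheaply from the existence of the corresponding Brauer character in the series and positivity of projectives). The strategy for the upper bounds is to use \emph{every} available genuine projective: each projective $\Psi$ written in the basic set has nonnegative coefficients, and expanding $\langle \Psi, \chi\rangle \ge 0$ for the relevant ordinary unipotent $\chi$ after substituting the known entries yields a linear inequality in the unknowns; taking the best such inequality over all constructed projectives (those induced from $B$, $P_a$, $P_b$ and those obtained by tensoring with Steinberg) gives the bounds like $a \le \frac{q^2+3\sqrt{2}q+4}{12}$, $t \le \frac{q^4}{4}$, etc. The arithmetic here is genuinely involved because some unipotent characters have degree very close to that of Steinberg, so the inequalities are tight and one must be careful with the $q$-dependence; moreover, for a few parameters (the exceptional cases $\ell=5$, $n\equiv 7,12 \bmod 20$ in (xi) and (xvi)) the generic argument for the lower bound $\ge 2$ fails and one needs an ad hoc argument — presumably a congruence obstruction coming from a specific small prime dividing a relevant character degree difference — to cover or exclude those cases. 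Carrying out these CHEVIE-assisted scalar-product computations and extracting the sharpest bounds is the technical heart of the proof; the conceptual input (Harish-Chandra theory, Hecke algebras, Jordan decomposition, basic sets) only sets the stage.
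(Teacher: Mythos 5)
Your overall architecture matches the paper's proof of Theorem~\ref{thm:decnum3} almost exactly: relations from Table~\ref{tab:rels3} reduce everything to the unipotent characters; projectives induced from $B$, $P_a$, $P_b$ and Harish-Chandra induced from $L_b$ (Table~\ref{tab:proj3}) give the unitriangular approximation Table~\ref{tab:scalar3}; the Hecke algebra of $\mathbf{1}_B^G$ together with \cite[Corollary~4.10]{Dipper} fixes $\Phi_1$, $\Phi_4$ and the principal series; Bonnaf\'e--Rouquier handles the non-unipotent blocks away from type $g_5$; and the lower bounds come from non-negativity of the entries $h-1$, $j-1$, $s-1$, $s-2$, $x-2$, etc.\ in the rows of Table~\ref{tab:decuni3} below the basic set. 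However, there are two concrete slips.

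First, your claim that the Steinberg character ``has defect zero here'' is false: $\chi_{21}(1)=q^{24}$ is prime to $\ell$, so $\chi_{21}$ has full defect (it lies in the principal block). The defect-zero unipotent characters in the case $\ell\mid q^2+\sqrt{2}q+1$ are those whose degrees are divisible by $\phi_8'^2$, namely $\chi_6$, $\chi_7$, $\chi_8$, $\chi_{15}$, $\chi_{16}$, and the auxiliary projectives that the sharp upper bounds for $h$, $j$, $u$, $w$ actually require are $\Psi_{13}'=\chi_3\otimes\chi_8$, $\Psi_{14}'=\chi_2\otimes\chi_8$, $\Psi_{18}'=\chi_2\otimes\chi_6$ (Table~\ref{tab:addscal}), combined with an integrality argument: e.g.\ writing $\Psi_{13}'=\frac{q}{\sqrt{2}}\Phi_{13}+A\Phi_{19}+B\Phi_{21}+\Phi$ forces $\frac{q}{\sqrt{2}}h\le\frac{q^2+\sqrt{2}q}{4}$, whence $h\le\frac{\sqrt{2}q}{4}+\frac12$ and then $h\le\frac{\sqrt{2}q}{4}$ since $h\in\Z$. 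Tensoring with Steinberg produces nothing projective, so as stated your source of the crucial bounds (vii), (ix), (xiii), (xv) evaporates. Second, the exceptional clause in (xi) and (xvi) does not call for any ad hoc argument in the excluded case: the bound $s\ge 2$ (resp.\ $x\ge 2$) is exactly the non-negativity of the entry $s-2$ (resp.\ $x-2$) in the row $\{\chi_{14,1}\}$, and that row simply does not exist when $\frac{1}{96}(\ell^f-1)(\ell^f-5)=0$, i.e.\ when $\ell=5$ and $f=1$; the theorem then merely asserts the weaker bound $s,x\ge 1$ coming from the rows $\{\chi_{10,\ast}\}$. You should also note the use of complex conjugation (the pairs $(\chi_{11},\chi_{12})$, $(\chi_{13},\chi_{14})$, $(\chi_{19},\chi_{20})$ and their PIMs) to identify the parameters $b,d,e,g,h,i,j,t,u,x$ across symmetric positions, without which the list of unknowns would be roughly twice as long.
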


For small $q$, some of the lower and upper bounds in
Theorem~\ref{thm:decnum3} coincide. Thus, we obtain the following
obvious consequence:

\begin{corollary} \label{cor:decnum3q8}
Suppose $n=1$, that is $G = {^2F_4}(8)$, and 
$\ell \, | \, q^2+\sqrt{2}q+1$. Then $\ell = 13$ and for the
decomposition numbers in Theorem~\ref{thm:decnum3} we have
$h=j=1$ and $x=2$.
\end{corollary}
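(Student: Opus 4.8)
The plan is to substitute $n=1$ into Theorem~\ref{thm:decnum3} and read off the bounds. For $n=1$ we have $q^2 = 2^{2\cdot 1+1} = 8$, so $q = 2\sqrt 2$ and $\sqrt 2\, q = 4$. Consequently $\phi_8' = q^2 + \sqrt 2\, q + 1 = 13$, which is prime; since by hypothesis $\ell$ divides $\phi_8'$, we conclude $\ell = 13$.

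Next I would evaluate the relevant bounds from Theorem~\ref{thm:decnum3} for this value of $q$. Part~(vii) gives $1 \le h \le \frac{\sqrt 2\, q}{4} = 1$, hence $h = 1$, and part~(ix) gives $1 \le j \le \frac{\sqrt 2\, q}{4} = 1$, hence $j = 1$. For $x$, part~(xvi) gives $1 \le x \le \frac{q}{\sqrt 2} = 2$; moreover, since $\ell = 13 \neq 5$, the refined lower estimate in~(xvi) applies and yields $x \ge 2$, so $x = 2$. This proves all three assertions.

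There is no genuine obstacle here: the corollary is an immediate numerical consequence of Theorem~\ref{thm:decnum3}. The only points that require any attention are checking that the sharp upper bounds $\frac{\sqrt 2\, q}{4}$ and $\frac{q}{\sqrt 2}$ really collapse to $1$ and $2$ for $q^2 = 8$, so that they coincide with the corresponding lower bounds (after invoking the side condition), and verifying that the hypothesis $\ell \neq 5$ needed for $x \ge 2$ is indeed satisfied because $\ell = 13$.
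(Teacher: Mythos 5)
Your proposal is correct and matches the paper's own (implicit) argument: the paper simply observes that for $n=1$ the lower and upper bounds in Theorem~\ref{thm:decnum3} coincide, which is exactly the numerical check you carried out ($\sqrt{2}q=4$, so the bounds in (vii), (ix), (xvi) collapse, and $\ell=13\neq 5$ activates the refined lower bound $x\ge 2$).
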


\subsection{\texorpdfstring{The case $\ell \, | \,
    q^2-\sqrt{2}q+1$}{The case l divides q2-sqrt2q+1}}
\label{case4}

This case is similar to the case $\ell \, | \, q^2+\sqrt{2}q+1$.
The decomposition numbers of the unipotent blocks $\mathcal{E}_\ell(G,1)$
are given in Table~\ref{tab:decuni4}.  
The irreducible Brauer characters $\phi_1, \dots, \phi_{21}$ in the
unipotent blocks are distributed into $18$ Harish-Chandra series. 
The Levi subgroup $T$ has a unique cuspidal unipotent
Brauer character, and the Levi subgroup~$L_b$ has three
cuspidal unipotent Brauer characters. The corresponding modular
Harish-Chandra series of $G$ contain, respectively, $4$,
$1$, $1$, $1$ irreducible Brauer characters. Additionally, $G$ has
$14$ cuspidal unipotent Brauer characters.  

As usual, the columns labeled by $ps$ correspond to the irreducible
Brauer characters in the principal series. As in the case
$\ell \, | \, q^2+\sqrt{2}q+1$, we denote the modular Harish-Chandra
series of $G$ coming from $L_b$ by ${^2B_2}[a]$, ${^2B_2}[b]$ and
${^2B_2}[\text{St}]$. They belong to the cuspidal Brauer characters
$\varphi_a$, $\varphi_b$ of degree $\frac{q}{\sqrt{2}}(q^2-1)$ and the
modular Steinberg character $\varphi_{St}$ of~$L_b$, respectively. The
columns labeled by ``c'' correspond to the cuspidal unipotent Brauer
characters.  
The decomposition numbers of the non-unipotent irreducible characters
are given in Tables~\ref{tab:dec2}, \ref{tab:dec3},
\ref{tab:decg5cyc}-\ref{tab:decnilp} in the Appendix.

\begin{theorem} \label{thm:decnum4}
Let $\ell$ be a prime dividing $q^2-\sqrt{2}q+1$. The $\ell$-modular
decomposition numbers of $G={^2F_4}(2^{2n+1})$, $n>0$, are given by
Tables~\ref{tab:decuni4} and \ref{tab:dec2}, \ref{tab:dec3},
\ref{tab:decg5cyc}-\ref{tab:decnilp} in the
Appendix. There are the following bounds:
\begin{enumerate}
\item[(i)] \quad $0 \le a \le \frac{q^2-3\sqrt{2}q+4}{12}$.

\smallskip

\item[(ii)] \quad $0 \le b \le \frac{q^2-\sqrt{2}q}{4}$.

\smallskip

\item[(iii)] \quad $0 \le c \le \frac{q^2-2}{3}$.

\smallskip

\item[(iv)] \quad $0 \le d \le \frac{\sqrt{2}q(q^2-2)}{24}$.

\smallskip

\item[(v)] \quad $0 \le e \le \frac{\sqrt{2}q-4}{4}$.

\smallskip

\item[(vi)] \quad $0 \le g \le \frac{\sqrt{2}q(q^2+2)}{8}$.

\smallskip

\item[(vii)] \quad $0 \le h \le \frac{\sqrt{2}q(q^2-2)}{8}$.

\smallskip

\item[(viii)] \quad $0 \le i \le \frac{\sqrt{2}q(q^2+1)}{6}$.

\smallskip

\item[(ix)] \quad $0 \le j \le \frac{\sqrt{2}q-4}{4}$.

\smallskip

\item[(x)] \quad $0 \le r \le \frac{q^4-4}{12}$.

\smallskip

\item[(xi)] \quad $1 \le s \le \frac{q^2-\sqrt{2}q}{4}$. If 
$\ell \neq 5$ or $n \equiv 2$ or $n \equiv 17$ mod $20$, then $s \ge 2$.

\smallskip

\item[(xii)] \quad $0 \le t \le \frac{q^2-3\sqrt{2}q+4}{12}$.

\smallskip

\item[(xiii)] \quad $0 \le u \le \frac{q^4}{4}$.

\smallskip

\item[(xiv)] \quad $0 \le v \le \frac{q^4+2}{3}$.

\smallskip

\item[(xv)] \quad $0 \le w \le \frac{q^2-\sqrt{2}q+4}{4}$.

\smallskip

\item[(xvi)] \quad $0 \le x \le \frac{q}{\sqrt{2}}-2$. 
\end{enumerate}
\end{theorem}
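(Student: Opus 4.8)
The plan is to follow exactly the same strategy as in the case $\ell \, | \, q^2+\sqrt{2}q+1$ (Theorem~\ref{thm:decnum3}), since the relevant $\ell$ divides the $(t2)$-cyclotomic polynomial $\phi_{24}''$ just as $\phi_{24}'$ in that case, so the generic block theory of \cite{BMGenSylow}, \cite{BMMGenBlocks} applies and produces Sylow $\ell$-subgroups of the same shape. First I would identify the unipotent blocks $\mathcal{E}_\ell(G,1)$ and their defect groups using the results of \cite{HissHabil}, and fix the ordinary basic set $\mathcal{E}(G,1)$ coming from \cite{GeckHissBasicSets} (applicable since $\ell>3$ is good for $F_4$). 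The next step is to produce enough projective characters: induce projective indecomposables (or at least projective characters) from the proper parabolic subgroups $B$, $P_a$, $P_b$ using the character tables in \cite{HimstedtHuang2F4Borel}, \cite{HimstedtHuang2F4MaxParab}, and tensor the Steinberg character (a defect-$0$ character on the $\ell'$-part in question) with ordinary irreducibles. Computing scalar products of these projectives with the ordinary irreducibles in each block — all done in CHEVIE — yields an approximation to the decomposition matrix which, after triangularising with respect to a suitable ordering of the basic set, already exhibits the lower unitriangular shape recorded in Table~\ref{tab:decuni4} and pins down most entries.

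For the entries not determined by the projectives, I would bring in the finer tools mentioned in the introduction. The distribution of $\phi_1,\dots,\phi_{21}$ into the stated $18$ modular Harish-Chandra series is obtained from modular Harish-Chandra theory: the cuspidality of the modular Steinberg character of $L_b\cong \Z_{q^2-1}\times\Sz(q^2)$ follows from \cite[Theorem~4.2]{GHMCusp}, and the ${^2B_2}[a]$, ${^2B_2}[b]$, ${^2B_2}[\text{St}]$ series are then governed by the relevant Hecke algebra over $k$, whose parameters force the shape of those columns. The remaining unipotent entries (the ``$c$''-columns for cuspidal Brauer characters) are handled by writing the restrictions $\breve\chi$ of the ordinary irreducibles \emph{not} in the basic set as $\Z$-linear combinations of $\phi_1,\dots,\phi_{21}$ — these are the ``relations'' in the Appendix — and combining the resulting linear constraints with the non-negativity of decomposition numbers and with the upper bounds coming from the projectives. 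For the non-unipotent blocks $\mathcal{E}_\ell(G,s)$, $s\neq 1$, I would invoke the modular Jordan decomposition of Bonnafé–Rouquier to reduce to proper $\ell'$-centralisers (whose Dynkin types are listed in \cite[Table~B.9]{HimstedtHuangDade2F4}), transporting the already-known decomposition matrices of the smaller groups ($\mathrm{GL}_2$, $\mathrm{SL}_2$, $\mathrm{Sz}$, tori, and type-$g_5$ centralisers) through this Morita equivalence; this gives Tables~\ref{tab:dec2}, \ref{tab:dec3}, \ref{tab:decg5cyc}--\ref{tab:decnilp}.

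It remains to justify the explicit numerical bounds (i)--(xvi) on the sixteen undetermined parameters $a,\dots,x$. The upper bounds are forced by requiring that the matrix of scalar products of the constructed projective characters with a putative ordinary basic set decompose, over $\Z_{\ge 0}$, through the decomposition matrix: each projective $\Psi$ satisfies $\langle \Psi, \chi\rangle = \sum_\varphi d_{\chi\varphi}\,\langle \Psi,\varphi\rangle_{\mathrm{proj}}$ with all terms non-negative, which bounds the parameter appearing in $d_{\chi\varphi}$ by an explicit rational expression in $q$; one then rounds down to the nearest integer, producing the fractions $\tfrac{q^2-3\sqrt2 q+4}{12}$, $\tfrac{\sqrt2 q(q^2-2)}{24}$, etc. The lower bounds $\ge 1$ in (xi), (xv) and the like come from the fact that the corresponding Brauer character genuinely appears (e.g.\ because the associated modular Harish-Chandra series is non-empty, or because a certain projective has a constituent forcing $d\ge 1$), and the sharper $s\ge 2$ refinement under the congruence conditions on $n$ modulo $20$ (with the $\ell\neq 5$ exception) is obtained by a more careful count that uses the precise $\ell$-adic valuations of the relevant $\phi_i$ — this mirrors the $\ell\neq 11$, $n\equiv 27 \bmod 55$ analysis in Theorem~\ref{thm:decnum2} and the $n\equiv 7,12 \bmod 20$ analysis in Theorem~\ref{thm:decnum3}. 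The main obstacle, as in all such computations, is precisely this last point: separating the genuinely undetermined multiplicities (the cuspidal Brauer characters in the reduction of the large-degree unipotent characters, including the Steinberg character) from those we can pin down — the projective characters we can build simply do not see these entries, and Hecke-algebra and Harish-Chandra arguments only constrain them to an interval rather than a single value, so the bounds in (i)--(xvi) are the best the present methods yield.
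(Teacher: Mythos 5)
Your outline matches the paper's strategy in broad terms (projectives induced from parabolics, scalar products in CHEVIE, unitriangularity, relations plus non-negativity for lower bounds, Hecke algebra for the principal series, Bonnaf\'e--Rouquier for most non-unipotent blocks), but two concrete steps would fail as you describe them. First, you propose to handle \emph{all} non-unipotent blocks, including the type-$g_5$ ones, by Bonnaf\'e--Rouquier. That theorem does not apply to $\mathcal{E}_\ell(G,s_5)$: the semisimple element $s_5$ has order $3$ and is isolated, so $C_{\G^*}(s_5)$ is not contained in a proper Levi subgroup (see Subsection~\ref{jordan}). For $\ell \mid q^2-\sqrt{2}q+1$ the correct argument is much simpler and different: the three characters $\chi_{5,1}$, $\chi_{5,q^2(q^2-1)}$, $\chi_{5,St}$ all have degree divisible by $\phi_8''^2$, hence have defect $0$, which gives Table~\ref{tab:decg5cyc} directly. (Relatedly, your parenthetical that the Steinberg character is of defect $0$ here is false --- $\chi_{21}$ has degree $q^{24}$, so it lies in a block of full defect; the defect-$0$ characters one tensors with are $\chi_5$ and $\chi_9$.)

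Second, and more importantly, the generic recipe ``each scalar product $\langle\Psi,\chi\rangle$ bounds the parameter, then round down'' does not produce the bound (xvi), $x \le \frac{q}{\sqrt{2}}-2$. The only projectives whose $\Phi_{19}$-multiplicity is controlled give $(\chi_{21},\Psi_{19})_G=\frac{q}{\sqrt{2}}$ with $\Phi_{19}$ occurring once, so the naive bound is merely $x\le\frac{q}{\sqrt{2}}$. The paper needs a genuinely two-step argument: from $\Psi_{11}''={_{P_b}\chi_{25}}^G=\Phi_{11}+A\,\Phi_{19}+B\,\Phi_{21}+\Phi$ one gets the pair of equations $e+A=\frac{q}{\sqrt{2}}$ and $t+Ax+B=\frac{q^2-\sqrt{2}q}{4}$; the separately established bound $e\le\frac{\sqrt{2}q-4}{4}$ forces $A\ge\frac{q}{2\sqrt{2}}+1$, and only then does $Ax\le\frac{q^2-\sqrt{2}q}{4}$ yield $x\le\frac{q}{\sqrt{2}}-2$. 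The same issue arises (less dramatically) for the bounds on $u$ and $w$, which require the extra tensor-product projectives $\chi_2\otimes\chi_9$, $\chi_3\otimes\chi_9$, $\chi_2\otimes\chi_5$ and a division by the coefficient $\frac{q}{\sqrt{2}}$ of $\Phi_{13}$, $\Phi_{14}$, $\Phi_{18}$ in them, not just a single scalar product. Without these specific constructions and the interplay between the bounds on different parameters, the stated inequalities (v), (ix), (xiii), (xv), (xvi) are not reachable. Your description of the lower bound $s\ge 1$ (and $s\ge 2$ under the congruence conditions) is essentially right in spirit, but to be complete you should say explicitly that it comes from the non-negativity of the entries $s-1$ and $s-2$ in the relation rows for $\chi_{8,St}$ and $\chi_{13,1}$, the latter row existing precisely when $\frac{1}{96}(\ell^f-1)(\ell^f-5)>0$.
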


\begin{corollary} \label{cor:decnum4q8}
Suppose $n=1$, that is $G = {^2F_4}(8)$, and 
$\ell \, | \, q^2-\sqrt{2}q+1$. Then $\ell = 5$ and for the
decomposition numbers in Theorem~\ref{thm:decnum4} we have
$a=e=j=t=x=0$, $s=1$.
\end{corollary}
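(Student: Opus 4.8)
The plan is to derive the corollary by specializing the sixteen bounds (i)--(xvi) of Theorem~\ref{thm:decnum4} to the case $n=1$. First I would record the relevant numerical values: with the normalization $q=\sqrt{2^{2n+1}}$ from Subsection~\ref{grpsetup}, the case $n=1$ gives $q^2=2^3=8$, so that $\sqrt{2}q=\sqrt{2^{2n+2}}=4$ and $q/\sqrt{2}=\sqrt{2^{2n-1}}=2$. Consequently $q^2-\sqrt{2}q+1=8-4+1=5$, which is prime; hence any prime $\ell$ dividing $q^2-\sqrt{2}q+1$ must equal $5$, and in particular $\ell>3$, so Theorem~\ref{thm:decnum4} applies.

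Next I would substitute these values into the bounds and observe that in precisely the cases named in the corollary the upper and lower limits coincide: (i) and (xii) give $a,t\le\frac{8-12+4}{12}=0$; (v) and (ix) give $e,j\le\frac{4-4}{4}=0$; (xvi) gives $x\le\frac{q}{\sqrt{2}}-2=0$; and (xi) gives $1\le s\le\frac{8-4}{4}=1$. Since $a$, $e$, $j$, $t$, $x$ are all nonnegative by the same bounds, this forces $a=e=j=t=x=0$ and $s=1$, which is exactly the assertion. Note that for $n=1$ we have $\ell=5$ and $n\not\equiv 2,17\pmod{20}$, so the conditional lower bound $s\ge 2$ in (xi) does not apply; in fact no conditional bound is needed anywhere in the argument.

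There is no genuine obstacle here: once Theorem~\ref{thm:decnum4} is in hand, the corollary is a purely arithmetic consequence, in the same spirit as Corollary~\ref{cor:decnum3q8}. The only point meriting a little care is to carry the substitutions $\sqrt{2}q\mapsto 4$ and $q/\sqrt{2}\mapsto 2$ through consistently; one then checks routinely that each of the remaining bounds (ii)--(iv), (vi)--(viii), (x), (xiii)--(xv) has a strictly smaller lower than upper limit at $q^2=8$ and therefore yields no additional equality, so the list in the corollary is complete.
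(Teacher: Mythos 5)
Your proposal is correct and is exactly the argument the paper intends: the corollary is obtained by substituting $q^2=8$, $\sqrt{2}q=4$, $q/\sqrt{2}=2$ into the bounds of Theorem~\ref{thm:decnum4} and noting that the upper and lower limits coincide precisely for $a$, $e$, $j$, $t$, $x$ (forced to $0$) and $s$ (forced to $1$), with the conditional bound in (xi) not applying since $\ell=5$ and $n=1$. (Only a trivial slip: $q/\sqrt{2}=\sqrt{2^{2n}}=2^n$, not $\sqrt{2^{2n-1}}$, though the value $2$ you use for $n=1$ is correct.)
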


\subsection{Remarks on the decomposition matrices} \label{sub:rmkdecmat}

\begin{enumerate}
\item[(a)] If $\ell$ divides $q^2-1$, then $\ell$ is a linear prime for
$G$ in the sense of \cite[Section~6]{HissHabil} and most
of the statements in Theorem~\ref{thm:decnum1} are immediate
consequences of results of G.~Hi{\ss}; see~\cite[Theorem~6.3.7]{HissHabil}. 
One can see from Table~\ref{tab:decuni1} that each unipotent block
of~$G$ coincides with exactly one modular Harish-Chandra series. 

\item[(b)] Tables~\ref{tab:dec2}, \ref{tab:dec3} and
\ref{tab:dec6}-\ref{tab:decnilp} are valid for all odd primes $\ell$,
in particular for $\ell=3$ and those primes~$\ell$ where the Sylow
$\ell$-subgroups of $G$ are cyclic. For $\ell=3$ this will be
discussed in the next section, for the remaining odd primes this
follows from Theorems~\ref{thm:decnum1}, \ref{thm:decnum2},
\ref{thm:decnum3}, \ref{thm:decnum4} and \cite{HissHabil}. Example:
the \textit{otherwise} in Table~\ref{tab:dec10} means \textit{for all
  odd primes $\ell$ not dividing $q^4+1$}. 

\item[(c)] Together with the ordinary character table of $G$ in
CHEVIE, the decomposition matrices in Appendices~C and D determine
all irreducible $\ell$-modular Brauer characters of $G$ for all
primes $\ell \neq 2,3$ except for three irreducible Brauer
characters (two unipotent and one non-unipotent) in case 
$\ell \, | \, q^2+1$ and four irreducible Brauer characters in case
$\ell \, | \, q^4+1$.

\item[(d)] For all non-unipotent blocks, the tables in Appendix~D
describe only the decomposition numbers of the corresponding ordinary
basic sets with one exception: If $s_5 \in G^*$ is semisimple of type
$g_5$ and $3 \neq \ell \, | \, q^2+1$, then
Table~\ref{tab:decg5noncyc} contains information on the decomposition
numbers of all ordinary characters in $\mathcal{E}_\ell(G, s_5)$. The
reason for this is that we were not able to compute the
decomposition number $a'$ in $\chi_{5,St}$ and we hope that this
additional information might be useful in the determination of $a'$
using arguments similar to those in~\cite{OkuWakiSU3}.

\item[(e)] The decomposition numbers of ${^2F_4}(2)$ and the simple
Tits group ${^2F_4}(2)'$ were calculated for all odd primes $\ell$ by
G.~Hi{\ss}~\cite{HissDecompTits}. The case of defining characteristic
was handled by F.~Veldkamp~\cite{Veldkamp2F4}. These decomposition
matrices are contained in the GAP library \cite{GAP4}.
\end{enumerate}

\subsection{On a conjecture of M.~Geck and G.~Hi{\ss}} \label{sub:conj34}

In the following corollary we verify a conjecture of M.~Geck and
G.~Hi{\ss} in the special case of the Ree groups
$G={^2F_4}(q^2)$, see \cite[Conjecture~3.4]{GeckHiss}.

\begin{corollary} \label{cor:ghconj}
Let $\ell$ be a good prime for a root system of type $F_4$ and 
let $\mathcal{M}(\mathcal{F}_1) = \{\chi_1\}$, 
$\mathcal{M}(\mathcal{F}_2) = \{\chi_2, \chi_3\}$, 
$\mathcal{M}(\mathcal{F}_3) = \{\chi_4\}$, 
$\mathcal{M}(\mathcal{F}_4) = \{\chi_5, \dots, \chi_{17}\}$, 
$\mathcal{M}(\mathcal{F}_5) = \{\chi_{18}\}$, 
$\mathcal{M}(\mathcal{F}_6) = \{\chi_{19}, \chi_{20}\}$, 
$\mathcal{M}(\mathcal{F}_7) = \{\chi_{21}\}$ be the families of
unipotent irreducible characters of $G={^2F_4}(2^{2n+1})$, $n>0$, see
Subsection~\ref{char2F4}. Furthermore, let $D$ be the part of the
decomposition matrix of $G$ corresponding to the rows labeled by the
ordinary unipotent irreducible characters.
\begin{enumerate}
\item[(a)] The irreducible Brauer characters in
$\mathcal{E}_\ell(G,1)$ can be labeled such that $D$ has the following
shape:
\[
D = \left(\begin{array}{ccccccc}
D_1 & 0 & 0 & 0 & 0 & 0 & 0 \\
* & D_2 & 0 & 0 & 0 & 0 & 0 \\
* & * & D_3 & 0 & 0 & 0 & 0 \\
* & * & * & D_4 & 0 & 0 & 0 \\
* & * & * & * & D_5 & 0 & 0 \\
* & * & * & * & * & D_6 & 0 \\
* & * & * & * & * & * & D_7 
\end{array}\right)
\]
where $D_i$ is the identity matrix of size 
$|\mathcal{M}(\mathcal{F}_i)| \times |\mathcal{M}(\mathcal{F}_i)|$ for
$1 \le i \le 7$.

\item[(b)] If an ordinary unipotent character $\chi \in \Irr(G)$ is
cuspidal, then $\breve{\chi}$ is an irreducible Brauer character.
\end{enumerate}
\end{corollary}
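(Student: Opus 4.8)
The plan is to read both assertions directly off the explicit decomposition matrices furnished by Theorems~\ref{thm:decnum1}--\ref{thm:decnum4}, together with the Brauer tree data of Hi{\ss} for the cyclic-defect cases. Since $\ell$ is good for $F_4$ exactly when $\ell > 3$, the prime $\ell$ either divides one of $\widetilde\phi_1, \phi_4, \phi_8', \phi_8'', \phi_{12}, \phi_{24}', \phi_{24}''$ or does not divide $|G|$ at all; in the latter case $D$ is the $21\times 21$ identity and there is nothing to prove. When $\ell$ divides $\phi_{12}$, $\phi_{24}'$ or $\phi_{24}''$ the Sylow $\ell$-subgroups are cyclic and $D$ is obtained from the Brauer trees in \cite{HissHabil}, \cite{HissTrees2F4}; in the four remaining cases (\ref{eq:4cases}) the relevant part of $D$ is the submatrix of Tables~\ref{tab:decuni1}--\ref{tab:decuni4} indexed by the rows $\chi_1, \dots, \chi_{21}$.

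For part~(a), I would argue block by block along the family filtration. The key point is that Harish-Chandra series, hence also the labelling of the $\phi_j$, can be chosen compatibly with the family partition: the trivial and Steinberg characters form their own one-element families $\mathcal F_1$, $\mathcal F_7$ and reduce to (resp. contain, as their reduction is irreducible) single Brauer characters $\phi_1$, $\phi_{21}$; the pairs $\{\chi_2,\chi_3\}$ and $\{\chi_{19},\chi_{20}\}$ lie in the ${^2B_2}[a]$, ${^2B_2}[b]$ series, each of which contributes exactly two irreducible Brauer characters that I label so as to give identity diagonal blocks $D_2$, $D_6$; the singleton $\mathcal F_3 = \{\chi_4\}$ and $\mathcal F_5 = \{\chi_{18}\}$ again give $D_3 = D_5 = (1)$; and the large family $\mathcal F_4 = \{\chi_5,\dots,\chi_{17}\}$ accounts for the remaining $13$ Brauer characters, so $D_4$ is the $13\times 13$ identity. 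The strictly-lower-triangular shape of the off-diagonal blocks then amounts to the assertion that a unipotent Brauer character $\phi_j$ in the Harish-Chandra series attached to family $\mathcal F_i$ occurs in $\breve\chi$ only for ordinary unipotent $\chi$ lying in families $\mathcal F_{i'}$ with $i' \geq i$; this is exactly what one checks on the tables, using that the families are ordered by (increasing) $a$-value and that the decomposition matrix is known to be lower unitriangular with the diagonal running through the basic set $\mathcal E(G,1)$. For part~(b), the cuspidal unipotent ordinary characters of $G$ are $\chi_8,\dots,\chi_{17}$ (the members of $\mathcal F_4$ outside the principal and ${^2B_2}$ series) together with, in the appropriate cases, the members coming from the extra ${^2B_2}[\mathrm{St}]$ series; for each such $\chi$ one simply reads from the relevant table that the corresponding row of $D$ has a single nonzero entry, equal to $1$, placed in the column of a cuspidal unipotent Brauer character, so $\breve\chi$ is irreducible.

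The main obstacle is bookkeeping rather than mathematics: one must verify that a single choice of labelling of $\phi_1,\dots,\phi_{21}$ works \emph{simultaneously} in all four cases (\ref{eq:4cases}) and is consistent with the cyclic-defect cases, since the modular Harish-Chandra series and hence the number of Brauer characters in each ``column type'' differ from case to case (e.g. the $T$-series has $7$, $4$, $4$, $4$ members respectively). Concretely I would fix the labelling by the family of the ordinary character sitting on the diagonal, which is unambiguous because $\mathcal E(G,1)$ is an ordinary basic set and $D$ is unitriangular; with that convention the block structure of part~(a) is forced and part~(b) is a direct table inspection. One should also note in passing that the unknown entries $a,\dots,x$ appearing in Theorems~\ref{thm:decnum3}, \ref{thm:decnum4} all lie strictly below the diagonal and in rows of large-degree non-cuspidal unipotent characters, so they affect neither the identity diagonal blocks nor the rows treated in part~(b).
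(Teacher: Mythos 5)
Your proposal is correct and follows essentially the same route as the paper: both assertions are read directly off the decomposition matrices in Tables~C.1--C.4 (together with the Brauer trees for the cyclic-defect primes), with the labelling of the $\phi_j$ fixed by the unitriangular correspondence with the ordinary basic set $\mathcal{E}(G,1)$. One small caveat on your supporting commentary: the claim that the ${^2B_2}[a]$ and ${^2B_2}[b]$ series each contribute exactly two Brauer characters fails when $\ell \mid q^2\pm\sqrt{2}q+1$ (there each contains only $\phi_2$, resp.\ $\phi_3$, while $\phi_{19}$, $\phi_{20}$ are cuspidal), and the set of ordinary cuspidal unipotent characters in part~(b) is $\{\chi_8,\dots,\chi_{17}\}$ independently of $\ell$, with no extra members coming from ${^2B_2}[\mathrm{St}]$ --- but neither point affects the verification, which in the end is table inspection exactly as in the paper.
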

\begin{proof}
The condition that $\ell$ is good for $F_4$ is equivalent with
$\ell>3$. So the claim follows from the decomposition matrices in
Appendix~C and the Brauer trees in~\cite{HissHabil},
\cite{HissTrees2F4}. 
\end{proof}


\section{\texorpdfstring{On the Decomposition Matrices for
    $\ell=3$}{On the Decomposition Matrices for l=3}}
\label{sec:decompmatl3}

In this section, we provide some information on the $3$-modular
decomposition numbers of $G={^2F_4}(q^2)$, $q^2=2^{2n+1}$, $n>0$.
We fix a $3$-modular splitting system $(K,R,k)$ for all subgroups of
$G$ as in Subsection~\ref{ordmod}. All references to Brauer
characters, blocks, decomposition numbers in this section will
refer to characteristic~$3$. 
The prime number $3$ is a bad prime for $\G$ in the sense
of~\cite[p.~125]{DigneMichel} and it divides $\phi_4$ and $\phi_{12}$ 
and does not divide $\widetilde{\phi}_1 \, \phi_8' \, \phi_8'' \,
\phi_{24}' \, \phi_{24}''$. The Sylow $3$-subgroups of $G$ are non-abelian.

For every semisimple element $s \in G^*$ of order prime to $3$, the
set $\mathcal{E}_3(G, s)$ is a union of blocks. However, since $3$ is
a bad prime for $\G$, the results on ordinary basic sets
in~\cite{GeckHissBasicSets} do not apply in this situation. In fact,
the set $\mathcal{E}(G, 1)$ of ordinary unipotent irreducible
characters of $G$ is not a basic set for $\mathcal{E}_3(G, 1)$.
Using the explicit knowledge of the character table of $G$, is is not
difficult to see that
\begin{equation*}
\{\chi_1, \chi_2, \chi_3, \chi_4, \chi_{5,1}, \chi_5, \chi_6, \chi_7, \chi_8, 
\chi_{10}, \chi_{11}, \chi_{12}, \chi_{13}, \chi_{14}, \chi_{15}, \chi_{18}, 
\chi_{19}, \chi_{20}, \chi_{21}\}
\end{equation*}
is a basic set for $\mathcal{E}_3(G, 1)$. There are also basic sets
consisting entirely of unipotent characters, but we had to use this
basic set with the non-unipotent character $\chi_{5,1}$ in order to
show that the decomposition matrix has a unitriangular shape.
Again, using the explicit knowledge of the character table of $G$, it
is easy to determine the relations expressing the restrictions of the
remaining ordinary characters in $\mathcal{E}_3(G, 1)$ to the
$3$-regular elements as linear combinations of the characters in the
basic set, see Table~\ref{tab:relsl3}. The following statements
will be proved in Section~\ref{sec:proofs}.

We begin with some comments on the decomposition numbers of the
unipotent blocks $\mathcal{E}_3(G, 1)$ which are given in 
Table~\ref{tab:decunil3}. 
There are $19$ irreducible Brauer characters in the unipotent blocks; 
notation for these Brauer characters is given in the first row of
Table~\ref{tab:decunil3}. These irreducible Brauer characters are 
distributed into $14$ Harish-Chandra series corresponding to the Levi
subgroups $T$, $L_a$, $L_b$ and $G$.  
The Levi subgroups $T$ and $L_a$ each have a unique cuspidal unipotent
Brauer character, and the Levi subgroup $L_b$ has exactly two cuspidal
unipotent Brauer characters. The corresponding modular Harish-Chandra
series of $G$ contain, respectively, $4$, $1$, $2$, $2$ irreducible
Brauer characters. Additionally, $G$ has $10$ cuspidal unipotent
Brauer characters.  

The principal series is abbreviated by $ps$.
The Levi subgroup~$L_a$ has a unique cuspidal unipotent Brauer
character, the modular Steinberg character; see~\cite{GHMCusp}.
We denote the corresponding modular Harish-Chandra series of $G$ by
$A_1$. The Levi subgroup $L_b$~has~two cuspidal unipotent Brauer
characters $\varphi_a$, $\varphi_b$, the restrictions of the
ordinary unipotent irreducible cuspidal characters to the $3$-regular
elements. We denote the corresponding modular Harish-Chandra series by
${^2B_2}[a]$, ${^2B_2}[b]$, respectively. 
The remaining columns correspond to the cuspidal unipotent Brauer
characters.  

The situation for the non-unipotent characters of $G$ is less
complicated. Using the character table of $G$, one can see that for
all $3'$-elements $s \neq 1$, the set $\mathcal{E}(G, s)$ is an
ordinary basic set for $\mathcal{E}_3(G,s)$. The decomposition numbers
of the non-unipotent irreducible characters are given in
Tables~\ref{tab:dec2}, \ref{tab:dec3},
\ref{tab:dec6}-\ref{tab:decnilp} in the Appendix.  
In these tables, only the decomposition numbers of the ordinary basic
sets $\mathcal{E}(G, s)$, $s \neq 1$, are listed.

\begin{theorem} \label{thm:decnuml3}
The $3$-modular decomposition numbers of $G={^2F_4}(2^{2n+1})$, $n>0$,
are given by Tables~\ref{tab:decunil3} and
\ref{tab:dec2}, \ref{tab:dec3} and \ref{tab:dec6}-\ref{tab:decnilp} in
the Appendix. There are the following bounds:
\begin{enumerate}
\item[(i)] \quad $2 \le a \le q^2$.

\smallskip

\item[(ii)] \quad $0 \le b \le \frac{q^2+\sqrt{2}q}{4}$. If 
$n \equiv 1$ or $4$ mod $6$, then $b \ge 1$. If 
$n \equiv 4$ or $13$ mod $18$, \linebreak 
\hspace*{0.25cm} then $b \ge 2$. 

\smallskip

\item[(iii)] \quad $0 \le c \le \frac{q^2-\sqrt{2}q}{4}$. If 
$n \equiv 1$ or $4$ mod $6$, then $c \ge 1$. If 
$n \equiv 4$ or $13$ mod $18$, \linebreak 
\hspace*{0.25cm} then $c \ge 2$. 

\smallskip

\item[(iv)] \quad $2 \le d \le q^4$.

\smallskip

\item[(v)] \quad $1 \le e \le \frac{q^2+2}{2}$. If 
$n \equiv 1$ or $4$ mod $6$, then $e \ge 2$. If 
$n \equiv 4$ or $13$ mod $18$, \linebreak 
\hspace*{0.25cm} then $e \ge 3$. 

\smallskip

\item[(vi)] \quad $0 \le x_7 \le \frac{q^2}{2}$.

\smallskip

\item[(vii)] \quad $0 \le x_8 \le \frac{q^2+3\sqrt{2}q+4}{12}$.

\smallskip

\item[(viii)] \quad $0 \le x_{10} \le \frac{q^2-2}{6}$.

\smallskip

\item[(ix)] \quad $1 \le x_{15} \le \frac{q^2+1}{3}$.

\smallskip

\item[(x)] \quad $0 \le x_{18} \le q^2(q^2-1)$.

\smallskip

\item[(xi)] \quad $1 \le x_{21} \le q^6$. 
\end{enumerate}
\end{theorem}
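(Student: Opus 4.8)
The plan is to mirror the strategy used for the primes $\ell>3$, but taking into account that $3$ is a bad prime for $F_4$ and that the ordinary unipotent characters do not form a basic set for $\mathcal{E}_3(G,1)$. First I would assemble a supply of projective characters of $G$: on the one hand, the characters of projective $kG$-modules obtained by Harish-Chandra inducing projective indecomposables from the proper parabolic subgroups $B$, $P_a$, $P_b$ (whose character tables and class fusions are available from \cite{HimstedtHuang2F4Borel}, \cite{HimstedtHuang2F4MaxParab} and CHEVIE); on the other hand, the projectives obtained by tensoring a defect-$0$ character of $G$ with an arbitrary ordinary character of $G$, using the fact that $3 \nmid q^{24}$ so the Steinberg-like defect-$0$ constituents are available. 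Computing the scalar products of these projectives with the ordinary irreducible characters, restricted to $3$-regular classes and re-expressed through the basic set displayed before the theorem (with the relations of Table~\ref{tab:relsl3}), produces an upper approximation $D'$ to the decomposition matrix $D$. The key point at this stage is to choose the ordering of the basic set — in particular to place the non-unipotent character $\chi_{5,1}$ appropriately — so that $D'$ already exhibits a lower unitriangular shape; this forces the diagonal entries to be $1$ and hence pins down a set of genuine projective indecomposable characters up to the still-undetermined entries $a,b,c,d,e,x_7,x_8,x_{10},x_{15},x_{18},x_{21}$.

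Next I would nail down as many off-diagonal entries as possible. The entries in the rows above the diagonal that are \emph{not} among the listed unknowns are determined by the following elementary mechanism: each column of $D$ is a $\Z_{\ge 0}$-combination of columns of $D'$ summing to a true projective indecomposable character, and the positivity of \emph{all} decomposition numbers, together with the requirement that every ordinary character restricted to $3$-regular classes be a $\Z_{\ge 0}$-combination of the true irreducible Brauer characters, severely constrains the coefficients. Where these constraints alone do not suffice, I would bring in the less elementary tools: modular Harish-Chandra theory (the $14$ Harish-Chandra series described before the theorem, with the columns grouped by the Levi subgroups $T$, $L_a$, $L_b$, $G$), the theorem of Geck--Hi{\ss}--Malle on the cuspidality of the modular Steinberg character (\cite{GHMCusp}) to identify the column labeled $A_1$ as coming from $L_a$, and Hecke-algebra decomposition-number information to resolve the principal-series block. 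For the non-unipotent blocks, Bonnaf\'e--Rouquier modular Jordan decomposition reduces the computation to unipotent blocks of the centralizers $C_{G^*}(s)$, whose Dynkin types are listed in \cite[Table~B.9]{HimstedtHuangDade2F4}; since for $s\neq1$ the set $\mathcal{E}(G,s)$ \emph{is} an ordinary basic set (as noted from the character table), these blocks are handled exactly as for $\ell>3$, giving Tables~\ref{tab:dec2}, \ref{tab:dec3}, \ref{tab:dec6}--\ref{tab:decnilp}.

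Finally, the bounds (i)--(xi). The lower bounds come from positivity constraints combined with the relations in Table~\ref{tab:relsl3}: expressing $\breve\chi$ for the relevant ordinary character $\chi$ through the basic set and then through the projectives forces certain entries to be strictly positive, and in a few cases congruence conditions on $n$ (governing which of $\phi_8'$, $\phi_8''$ etc.\ is divisible by which small prime, and whether $\ell=5$ or $\ell=11$ enters) improve $\ge1$ to $\ge2$ or $\ge3$; these are the conditional statements in (ii), (iii), (v). The upper bounds are obtained by the standard device of producing an explicit projective character $\Psi$ (an induced projective or a tensor with a defect-$0$ character) whose scalar product with the ordinary character $\chi$ heading the row equals a known integer, say $m$: since every decomposition number is nonnegative and the column of $\Psi$ decomposes into columns of $D$ with nonnegative coefficients, the unknown entry is bounded above by $m$ minus the contributions already accounted for, which after simplification gives the displayed bounds such as $a\le q^2$, $d\le q^4$, $x_{21}\le q^6$. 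I expect the main obstacle to be exactly the point where this paper's abstract flags a gap: the largest-degree cuspidal unipotent rows — those controlling $d$, $x_{18}$ and especially $x_{21}$ (the Steinberg row) — where no available projective is ``tight'' enough to force the diagonal-adjacent multiplicity, so only the crude upper bound $x_{21}\le q^6$ survives; closing that gap would require module-theoretic input of the kind in \cite{Waki_G2} rather than character-theoretic bookkeeping.
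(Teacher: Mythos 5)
Your proposal follows essentially the same route as the paper: projectives induced from the parabolic subgroups plus defect-$0$ characters, scalar products against the ordinary basic set containing $\chi_{5,1}$ to get the unitriangular shape, Hecke-algebra and Geck--Hi{\ss}--Malle cuspidality input for the Harish-Chandra series, Bonnaf\'e--Rouquier for the non-unipotent blocks, and lower/upper bounds from non-negativity combined with the relations of Table~\ref{tab:relsl3} and the projective scalar products. One small correction: for $\ell=3$ the congruence conditions on $n$ in (ii), (iii), (v) do not come from $\ell=5$ or $\ell=11$ entering, but from whether $\frac{1}{2}(3^f-3)$ and $\frac{1}{48}(3^f-3)(3^f-9)$ are positive (where $3^f$ is the $3$-part of $q^2+1$), i.e.\ whether the relations for $\chi_{6,1}$, $\chi_{6,St}$ and $\chi_{15,1}$ actually exist.
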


Theorem~\ref{thm:decnuml3} will be proved in Section~\ref{sec:proofs}.  

\begin{corollary} \label{cor:decnuml3q8}
Suppose $n=1$, that is $G = {^2F_4}(8)$, and $\ell = 3$. Then for the 
decomposition number $c$ in Theorem~\ref{thm:decnuml3} we have $c = 1$.
\end{corollary}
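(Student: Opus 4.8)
The plan is to specialize Theorem~\ref{thm:decnuml3}(iii) to the case $n=1$; no new work is needed beyond the bounds already established there. First I would substitute $q^2 = 2^{2\cdot 1+1} = 8$, so that $q = 2\sqrt{2}$ and hence $\sqrt{2}\,q = 4$. Plugging this into the upper bound of Theorem~\ref{thm:decnuml3}(iii) gives
\[
c \;\le\; \frac{q^2-\sqrt{2}\,q}{4} \;=\; \frac{8-4}{4} \;=\; 1.
\]
For the matching lower bound, I would observe that $n=1$ satisfies $n \equiv 1 \pmod 6$, so the first of the two additional conditions in Theorem~\ref{thm:decnuml3}(iii) applies and yields $c \ge 1$. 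Combining the two inequalities forces $c=1$.

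There is no real obstacle here: the corollary is purely a numerical specialization of the bounds in Theorem~\ref{thm:decnuml3}, in the same spirit as Corollaries~\ref{cor:decnum3q8} and~\ref{cor:decnum4q8}. The only points that deserve a moment's care are the arithmetic $\sqrt{2}\,q = 4$ and the residues of $n$: one checks that $n=1 \not\equiv 4,13 \pmod{18}$, so the stronger bound $c\ge 2$ is correctly unavailable, and $c=1$ (rather than $c=2$) is indeed the forced value.
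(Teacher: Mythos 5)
Your proposal is correct and matches the paper's own (very terse) proof, which simply says the corollary follows from the bounds in Theorem~\ref{thm:decnuml3}; you have merely spelled out the arithmetic $\sqrt{2}\,q=4$, the upper bound $c\le 1$, and the lower bound $c\ge 1$ from $n=1\equiv 1\pmod 6$. Nothing further is needed.
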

\begin{proof}
This follows from the bounds in Theorem~\ref{thm:decnuml3}.
\end{proof}

\begin{corollary} \label{cor:triang}
Let $\ell$ be an odd prime and $G={^2F_4}(2^{2n+1})$ where
$n \ge 0$. After a suitable arrangement of rows and columns, the
$\ell$-modular decomposition matrix of $G$ has a lower unitriangular
shape. 
\end{corollary}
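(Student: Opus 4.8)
The plan is to reduce the claim to results already established in the paper, handling the various arithmetic cases for $\ell$ separately and then invoking the appropriate theorem in each. First, the case $\ell = 2$ (defining characteristic) is excluded by hypothesis, and the case $n = 0$ (the Tits group ${^2F_4}(2)'$ and the group ${^2F_4}(2)$) is covered by the decomposition matrices of G.~Hi{\ss}~\cite{HissDecompTits} and F.~Veldkamp~\cite{Veldkamp2F4}, where unitriangularity can be read off directly; see Remark~(e) in Subsection~\ref{sub:rmkdecmat}. So we may assume $n > 0$ and $\ell$ odd. If $\ell$ does not divide $|G|$, the decomposition matrix is the identity and there is nothing to prove. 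If $\ell$ divides one of $\phi_{12}$, $\phi_{24}'$, $\phi_{24}''$, then the Sylow $\ell$-subgroups of $G$ are cyclic, every block has cyclic defect, and the decomposition matrix can be read off from the Brauer trees in~\cite{HissHabil}, \cite{HissTrees2F4}; a Brauer tree always yields a decomposition matrix that becomes lower unitriangular after a suitable ordering of the rows and columns (order the ordinary characters and the simple modules along the tree, or use the standard argument that the non-exceptional vertices give the unitriangular part and the exceptional characters attach below).

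For the remaining odd primes we split into the cases of~(\ref{eq:4cases}) together with $\ell = 3$. If $\ell > 3$ and $\ell$ divides $q^2-1$, $q^2+1$, $q^2+\sqrt{2}q+1$, or $q^2-\sqrt{2}q+1$, then Theorems~\ref{thm:decnum1}, \ref{thm:decnum2}, \ref{thm:decnum3}, \ref{thm:decnum4} give the unipotent decomposition numbers explicitly (up to the bounded entries $a, b, \dots$), and Tables~\ref{tab:dec2}, \ref{tab:dec3}, \ref{tab:decg5cyc}--\ref{tab:decnilp} give the decomposition numbers of the ordinary basic sets $\mathcal{E}(G,s)$, $s\neq 1$, for the non-unipotent blocks. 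In each of these tables one checks that, after arranging the rows so that the basic-set characters come in a suitable order and the remaining ordinary characters come afterwards, the block-diagonal part indexed by an ordinary basic set is lower unitriangular: the unknown entries $a,b,\dots$ all lie strictly below the diagonal and hence do not affect the shape. For $\ell = 3$ the argument is the same, using Theorem~\ref{thm:decnuml3} and Table~\ref{tab:decunil3} for the unipotent blocks and the remark in Subsection~\ref{blocksbas} together with Tables~\ref{tab:dec2}, \ref{tab:dec3}, \ref{tab:dec6}--\ref{tab:decnilp} for the non-unipotent blocks; here one must remember that the $3$-modular unipotent basic set is the one displayed in Section~\ref{sec:decompmatl3} (containing $\chi_{5,1}$), not $\mathcal{E}(G,1)$ itself, but that basic set was chosen precisely so that its decomposition matrix is unitriangular.

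Finally, one assembles these blockwise statements: since the decomposition matrix of $G$ is block-diagonal with respect to the $\ell$-blocks, and the ordinary characters outside a chosen ordinary basic set for a block have their restrictions to $\ell$-regular elements expressed (via the relations in the Appendix, e.g.\ Table~\ref{tab:relsl3}) as $\Z$-linear combinations of the basic-set characters, a global ordering of rows and columns is obtained by concatenating the per-block orderings and placing the non-basic-set ordinary characters below the corresponding unitriangular diagonal blocks. The main point to verify — and the only place where genuine work is needed rather than bookkeeping — is that for \emph{every} block the relevant square sub-matrix indexed by a fixed basic set is unitriangular; but this is exactly what is recorded in the decomposition tables of the Appendices, so the corollary follows by inspection of those tables together with the cyclic-defect case handled by the Brauer trees.
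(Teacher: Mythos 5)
Your argument is correct and is essentially the paper's own proof, which simply cites the decomposition matrices of ${^2F_4}(2)$ in \GAP{} for $n=0$ and Theorems~\ref{thm:decnum1}--\ref{thm:decnum3}, \ref{thm:decnum4}, \ref{thm:decnuml3} together with the Brauer trees of \cite{HissTrees2F4} for $n>0$; you have merely filled in the routine details (pruning leaves of a Brauer tree, the role of the basic set containing $\chi_{5,1}$ for $\ell=3$, and the blockwise assembly). The only cosmetic slip is the reference to Veldkamp's defining-characteristic results, which are irrelevant here since $\ell$ is odd.
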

\begin{proof}
For $n=0$, this follows from the decomposition matrices of ${^2F_4}(2)$
in GAP. For $n>0$, it is a consequence of the decomposition matrices in
Theorems~\ref{thm:decnum1}-\ref{thm:decnum3}, \ref{thm:decnum4},
\ref{thm:decnuml3} and the Brauer trees in~\cite{HissTrees2F4}.
\end{proof}

\subsection{\texorpdfstring{Remarks on the $\mathbf{3}$-modular
    decomposition matrices}{Remarks on the 3-modular decomposition matrices}}  
\label{sub:rmkl3}

It seems to be necessary to use methods different from the ones in
this paper to improve the bounds for the decomposition numbers in
Theorem~\ref{thm:decnuml3} substantially. We induced projective
characters of $B$ and several maximal subgroups of $G$, computed tensor
products of projective characters and irreducible characters
of~$G$ and considered restrictions of modules to $3$-blocks
of the parabolic subgroup~$P_a$. However, in this way we were not
able to derive better upper bounds for the decomposition numbers in
Theorem~\ref{thm:decnuml3}.


\section{Proofs}
\label{sec:proofs}

In this section, we prove the theorems of Sections~\ref{sec:decompmat}
and \ref{sec:decompmatl3}. We begin by describing some information and
general methods which will be used in the proofs of these theorems.

\subsection{Projective characters} \label{projectives}

One of the main ingredients in the proof of the theorems in
Sections~\ref{sec:decompmat} and \ref{sec:decompmatl3}
is the construction of projective characters. When
we speak of a projective character~of~$G$ we mean an ordinary
character which is a sum of characters of the projective
indecomposable $kG$-modules (PIMs). Via Brauer reciprocity, we can
interpret the decomposition numbers of $G$ as the scalar products of
the projective characters corresponding to the PIMs with the ordinary
irreducible characters of~$G$. We are going use the following methods
to produce projective characters:
\begin{itemize}
\item Every character of defect $0$ of $G$ is projective.

\item If $\chi$, $\psi$ are characters of $G$ and $\psi$ is
  projective, then the product $\chi \otimes \psi$ is projective.

\item If $\psi$ is a projective character of a subgroup $H$ of $G$,
  then the induced character $\psi^G$ is projective. This is
  particularly useful, if $H$ is an $\ell'$-subgroup, since then every
  character of $H$ is projective. 

\item If $L$ is one of the Levi subgroups $T$, $L_a$, $L_b$ and
  $\psi$ a projective character of $L$, then the Harish-Chandra
  induced character $R_L^G(\psi)$ is projective;
  see~\cite[Lemma~4.4.3]{HissHabil}. In particular, if $\gamma$ is the
  Gelfand-Graev character of $L$, then $R_L^G(\gamma)$ is projective.
\end{itemize}
The character tables of $B$, $P_a$, $P_b$, $G$ are available as CHEVIE
files and we have computer programs (written by C.~K\"ohler and the
author), based on the class fusions in~\cite{HimstedtHuang2F4MaxParab}
and \cite{HimstedtHuang2F4Borel}, for induction and restriction of
characters between these parabolic subgroups. Using these programs and
CHEVIE, we can easily compute induced and Harish-Chandra induced
characters as well as tensor products and scalar products of
characters of the various parabolic subgroups of $G$. 

\subsection{Hecke algebras} \label{hecke}

Another ingredient in the proof of the theorems in
Sections~\ref{sec:decompmat} and~\ref{sec:decompmatl3} 
are the decomposition numbers of the Hecke algebra
$\mathcal{H}$ corresponding to the permutation module on the cosets of
the Borel subgroup $B$ in $G$. By \cite[Corollary~4.10]{Dipper}, these
decomposition numbers form a submatrix of the decomposition matrix of
$\mathcal{E}_\ell(G, 1)$.

The computation of the decomposition numbers of $\mathcal{H}$ is
similar to the proof of~\cite[Proposition~5.1]{GeckDissPub}:
Let $\mathbf{1}_B$ be the trivial $RB$-module, and 
$V := \mathbf{1}_B^G$ the corresponding $RG$-permutation 
module with ordinary character $\vartheta$. Computing scalar products
with CHEVIE, we get
\[
\vartheta =
\chi_{1}+\chi_{4}+2\chi_{5}+2\chi_{6}+2\chi_{7}+\chi_{18}+\chi_{21}. 
\]
The Weyl group $W^1$ of $G$ (as a group with a BN-pair) has Coxeter
generators $w_a$, $w_b$ and is isomorphic to a dihedral group of order
$16$; see~\cite[Section~2]{HimstedtHuang2F4Borel}. 
The Hecke algebra $\mathcal{H} := \End_{RG}(V)$ of $V$ has a natural 
$R$-basis $\{T_w | w \in W^1\}$ satisfying the relations
\[
T_{\gamma} T_w = 
\begin{cases} T_{w_\gamma w} &, \text{if   } \, l(w_\gamma w)>l(w),\\
p_\gamma T_{w_\gamma w} + (p_\gamma-1) T_w &, \text{if   } \, l(w_\gamma w)<l(w)
\end{cases}
\]
for all $\gamma \in \{a, b\}$ and $w \in W^1$. Here, $l$ denotes the
Coxeter length and we have used the abbreviation
$T_\gamma=T_{w_\gamma}$. The parameters $p_a=q^2$ and $p_b=q^4$ are given
in~\cite[p.~66]{HissHabil}. As described
in~\cite[\S~67C]{CurtisReiner:90}, one can construct irreducible
representations $\ind$, $\sgn$, $\sigma_1$, $\sigma_2$, $S_1$,
$S_{-1}$ and $S_0$ of $\mathcal{H}$ affording irreducible
representations of $\mathcal{H}_K := K \otimes_R \mathcal{H}$ 
(by extending scalars), which will be denoted in the same way.
The $1$-dimensional of these representations are
\[
\ind: \begin{cases} T_a \mapsto q^2\\
                          T_b  \mapsto q^4
\end{cases}\hspace{-0.2cm}, \quad
\sgn: \begin{cases} T_a \mapsto -1\\
                          T_b  \mapsto -1
\end{cases}\hspace{-0.2cm}, \quad
\sigma_1: \begin{cases}    T_a \mapsto -1\\
                          T_b  \mapsto q^4
\end{cases}\hspace{-0.2cm}, \quad
\sigma_2: \begin{cases}    T_a \mapsto q^2\\
                          T_b  \mapsto -1
\end{cases},
\]
and for $\epsilon = 0, \pm 1$ there are the following $2$-dimensional
representations:
\[
S_\epsilon: \quad T_a \mapsto \left(\begin{array}{cc}
q^2 & 0\\ q^2 + \epsilon \sqrt{2}q + 1 & -1 \end{array}\right) , \quad
T_b \mapsto \left(\begin{array}{cc}
-1 & q^2\\ 0 & q^4 \end{array}\right).
\]
There is a natural bijection (``Fitting correspondence'') between the
isomorphism classes~of irreducible representations of
$\mathcal{H}_K$ and the irreducible constituents of $\vartheta$. 
The representations $\ind$, $\sgn$, $\sigma_1$, $\sigma_2$, $S_1$,
$S_{-1}$ and $S_0$ correspond to the unipotent characters $\chi_1$,
$\chi_{21}$, $\chi_{4}$, $\chi_{18}$, $\chi_{5}$, $\chi_{6}$ and $\chi_{7}$, 
respectively. This can be seen for example by computing generic degrees
using~\cite[Theorem~10.11.5]{Carter2}.

\begin{proposition} \label{prop:hecke_decnum} 
For $\ell \, | \, \widetilde{\phi}_1 \phi_4 \phi_8' \phi_8''$,
the $\ell$-modular decomposition numbers of the Hecke algebra
$\mathcal{H}$ are given by Table~\ref{tab:decnumH}. 
\end{proposition}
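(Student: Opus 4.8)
The plan is to compute the $\ell$-modular decomposition matrix of the Hecke algebra $\mathcal{H}$ directly, case by case, according to which cyclotomic polynomial among $\widetilde{\phi}_1$, $\phi_4$, $\phi_8'$, $\phi_8''$ is divisible by $\ell$, using the standard theory of decomposition numbers of Iwahori--Hecke algebras of dihedral type as developed in \cite[\S 67C]{CurtisReiner:90}. The key observation is that $\mathcal{H}$ is a Hecke algebra of a dihedral group of order $16$ with two distinct parameters $p_a = q^2$ and $p_b = q^4$, and its decomposition theory is governed entirely by the factorization behaviour of the relevant Poincar\'e/generic degree polynomials at the prime $\ell$. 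Since $\mathcal{H}_K$ is semisimple with the seven irreducible representations $\ind$, $\sgn$, $\sigma_1$, $\sigma_2$, $S_1$, $S_{-1}$, $S_0$ listed above, the task is to determine, for each of the four congruence conditions in \eqref{eq:4cases} (equivalently, for $\ell$ dividing each of $\widetilde\phi_1,\phi_4,\phi_8',\phi_8''$), which of these become reducible or non-isomorphic to each other upon reduction mod $\ell$, and to pin down the resulting decomposition numbers.

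First I would recall, following the recipe in \cite[\S 67C]{CurtisReiner:90} (or equivalently the general theory for Hecke algebras of finite Coxeter groups), that the generic degrees attached to $\ind$, $\sgn$, $\sigma_1$, $\sigma_2$, $S_1$, $S_{-1}$, $S_0$ are explicit products of the cyclotomic factors $\widetilde\phi_1$, $\phi_4$, $\phi_8'$, $\phi_8''$ (up to a power of $q$ and a rational constant), as one reads off from \cite[Theorem~10.11.5]{Carter2}; these determine the blocks of $k\mathcal{H} = k\otimes_R\mathcal{H}$ via the usual criterion that two simple $\mathcal{H}_K$-modules lie in the same $\ell$-block if and only if the corresponding generic degree ratio is not an $\ell$-unit. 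Then for each case I would analyse the $2$-dimensional modules $S_\epsilon$: the reduction mod $\ell$ of $S_\epsilon$ is uniserial/reducible precisely when the off-diagonal entry $q^2 + \epsilon\sqrt2 q + 1$ of the matrix of $T_a$ becomes $\equiv 0$ modulo $\ell$, i.e. when $\ell \mid \phi_8'$ (for $\epsilon = +1$) or $\ell \mid \phi_8''$ (for $\epsilon = -1$); otherwise $S_\epsilon$ stays irreducible. Similarly, the $1$-dimensional modules collapse or link up according to when $q^2 \equiv -1$, i.e. $\ell\mid\phi_4$, which forces $\sigma_1$ and $\sigma_2$ (and $\ind$, $\sgn$) into relations, and when $q\equiv 1$ mod $\ell$, i.e. $\ell\mid\widetilde\phi_1$, which is the ``linear prime'' situation where all parameters become $\equiv 1$ and $\mathcal{H}$ reduces essentially to the group algebra of the dihedral group.

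Concretely, the four cases are: (1) $\ell\mid\widetilde\phi_1$: here $p_a,p_b\equiv 1$, $k\mathcal{H}\cong k[W^1]$ with $\ell\nmid 16$, so $k\mathcal{H}$ is semisimple of the same dimension pattern, but the decomposition map from $\mathcal{H}_K$ to $k\mathcal{H}$ is non-trivial because several generic degrees share the factor $\widetilde\phi_1$ — I would identify which simple $\mathcal{H}_K$-modules pair up into the same projective; (2) $\ell\mid\phi_4$ (so $q^2\equiv-1$): $\sigma_1, \sigma_2$ become $1$-dimensional modules sending $(T_a,T_b)$ to $(-1,-1)$ and thus coincide with $\sgn$ up to the reduction, linking $\chi_{4},\chi_{18},\chi_{21}$, while the $S_\epsilon$ stay generically irreducible since $\phi_8',\phi_8''$ are prime to $\ell$; (3)--(4) $\ell\mid\phi_8'$ resp.\ $\phi_8''$: exactly one of $S_1$, $S_{-1}$ becomes reducible with a $1$-dimensional submodule that matches $\sigma_1$ or $\sigma_2$ (depending on the sign), giving the corresponding $2\times 1$ block of decomposition numbers, while the other $S_\epsilon$ and all $1$-dimensionals remain irreducible. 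In each case I would read off the projective covers of the simple $k\mathcal{H}$-modules and tabulate the multiplicities, obtaining Table~\ref{tab:decnumH}; since $\mathcal{H}$ has a triangular (quasi-hereditary) cell structure this is a finite, mechanical check once the generic degrees are in hand.

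The main obstacle I anticipate is not conceptual but bookkeeping: one must be careful about the identification (via the Fitting correspondence) of the seven $\mathcal{H}_K$-modules with the unipotent characters $\chi_1,\chi_4,\chi_5,\chi_6,\chi_7,\chi_{18},\chi_{21}$ so that the rows and columns of Table~\ref{tab:decnumH} are labelled consistently with the later decomposition matrices in Appendix~C, and one must verify in each case that the reduction of a $2$-dimensional $S_\epsilon$ really is uniserial (not semisimple) — equivalently that the relevant extension is non-split — which requires checking that $T_a$ (or $T_b$) acts non-semisimply mod $\ell$, i.e.\ that its two eigenvalues $q^2$ and $-1$ are distinct mod $\ell$ while the off-diagonal entry vanishes; the borderline subcase where $q^2\equiv -1$ \emph{and} $\ell\mid\phi_8'$ simultaneously cannot occur since $\ell>3$ divides exactly one of the factors, which is precisely the point of the remark following \eqref{eq:4cases}, so no further complication arises there.
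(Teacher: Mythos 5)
Your underlying method is exactly the one the paper uses: the $1$-dimensional representations reduce to $1$-dimensional (hence irreducible) Brauer characters, which one compares by evaluating at $T_a$, $T_b$ modulo $\ell$, and the composition factors of the reductions of $S_{\pm1}$, $S_0$ are found from the explicit $2\times2$ matrices by looking at simultaneous eigenspaces. The extra machinery you invoke (generic degrees, block linkage, quasi-hereditary structure) is not needed and is not what decides anything here.

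However, several of the concrete outcomes you predict are wrong, and following your plan as written would not reproduce Table~\ref{tab:decnumH}. First, your reducibility criterion for $S_\epsilon$ omits $\epsilon=0$: the off-diagonal entry of $S_0(T_a)$ is $q^2+1=\phi_4$, so $\overline{S_0}$ becomes reducible precisely when $\ell\mid\phi_4$ — yet in your case (2) you assert that all $S_\epsilon$ stay irreducible, contradicting the row of $S_0$ in the table (its factors are $\overline{\ind}$ and $\overline{\sgn}$). Second, in case (2) you claim $\sigma_1$ reduces to $(T_a,T_b)\mapsto(-1,-1)$; in fact $q^2\equiv-1$ gives $q^4\equiv1$, so $\overline{\sigma_1}=(-1,1)=\overline{\ind}$ while $\overline{\sigma_2}=(-1,-1)=\overline{\sgn}$: the linkage is $\{\chi_1,\chi_4\}$ and $\{\chi_{18},\chi_{21}\}$, not ``$\chi_4,\chi_{18},\chi_{21}$''. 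Third, you assert that the $1$-dimensional representations only become identified when $\ell\mid\phi_4$; but $\ell\mid\phi_8'$ or $\ell\mid\phi_8''$ forces $q^4\equiv-1$, whence $\overline{\ind}=\overline{\sigma_2}$ and $\overline{\sigma_1}=\overline{\sgn}$ — these identifications are what reduce the number of columns to four in those cases, and without them your cases (3)--(4) give the wrong number of simple modules. Fourth, in case (1) the table is the identity matrix, whereas you predict a non-trivial decomposition map on the grounds that generic degrees share the factor $\widetilde{\phi}_1$; that inference concerns block linkage in $G$, not irreducibility of the reductions of the $\mathcal{H}_K$-modules, and here all seven reductions remain irreducible and pairwise non-isomorphic. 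Finally, for decomposition numbers it is irrelevant whether the reducible $\overline{S_\epsilon}$ is uniserial or semisimple — only its composition factors matter — so the ``non-split extension'' check you flag as the main obstacle is a non-issue. A careful execution of the eigenspace computation would correct all of this, but as stated the proposal's case analysis does not match the proposition.
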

\begin{proof}
The decomposition numbers of the $1$-dimensional representations are
clear. For the $2$-dimensional ones, they follow easily from
considering simultaneous eigenspaces. 
\end{proof}

\begin{table}[!ht]
\caption[]{Decomposition numbers of the Hecke algebra $\mathcal{H}$.} 
\label{tab:decnumH}  

\begin{center}
\begin{tabular}{l|ccccccc|cccc|cccc|cccc}
\hline
\rule{0cm}{0.4cm}
& \multicolumn{7}{l|}{$\ell \, | \, q^2-1$} & 
  \multicolumn{4}{l|}{$\ell \, | \, q^2+1$} &
  \multicolumn{4}{l|}{$\ell \, | \, q^2+\sqrt{2}q+1$} &
  \multicolumn{4}{l}{$\ell \, | \, q^2-\sqrt{2}q+1$}
\rule[-0.2cm]{0cm}{0.36cm}\\
\hline
\rule{0cm}{0.36cm}
$\ind$ & $1$ & $.$ & $.$ & $.$ & $.$ & $.$ & $.$ & $1$ & $.$ & $.$ &
$.$ & $1$ \hspace{0.1cm} & $.$ \hspace{0.1cm} & $.$  & $.$  &
$1$ \hspace{0.1cm} & $.$ \hspace{0.1cm} & $.$ & $.$    
\rule[-0.2cm]{0cm}{0.36cm}\\
\rule{0cm}{0.36cm}
$\sigma_1$ & $.$ & $1$ & $.$ & $.$ & $.$ & $.$ & $.$ & $1$ & $.$ & $.$
& $.$ & $.$ \hspace{0.1cm} & $1$ \hspace{0.1cm} & $.$ & $.$  &
$.$ \hspace{0.1cm} & $1$ \hspace{0.1cm} & $.$  & $.$    
\rule[-0.2cm]{0cm}{0.36cm}\\
\rule{0cm}{0.36cm}
$S_1$ & $.$ & $.$ & $1$ & $.$ & $.$ & $.$ & $.$ & $.$ & $1$ & $.$
& $.$ &  $1$ \hspace{0.1cm} & $1$ \hspace{0.1cm} & $.$  & $.$  &
$.$ \hspace{0.1cm} & $.$ \hspace{0.1cm} & $1$  & $.$    
\rule[-0.2cm]{0cm}{0.36cm}\\
\rule{0cm}{0.36cm}
$S_{-1}$ & $.$ & $.$ & $.$ & $1$ & $.$ & $.$ & $.$ & $.$ & $.$ & $1$
& $.$ &  $.$ \hspace{0.1cm} & $.$ \hspace{0.1cm} & $1$  & $.$  &
$1$ \hspace{0.1cm} & $1$ \hspace{0.1cm} & $.$  & $.$    
\rule[-0.2cm]{0cm}{0.36cm}\\
\rule{0cm}{0.36cm}
$S_0$ & $.$ & $.$ & $.$ & $.$ & $1$ & $.$ & $.$ & $1$ & $.$ & $.$
& $1$ &  $.$ \hspace{0.1cm} & $.$ \hspace{0.1cm} & $.$  & $1$  &
$.$ \hspace{0.1cm} & $.$ \hspace{0.1cm} & $.$  & $1$  
\rule[-0.2cm]{0cm}{0.36cm}\\
\rule{0cm}{0.36cm}
$\sigma_2$ & $.$ & $.$ & $.$ & $.$ & $.$ & $1$ & $.$ & $.$ & $.$ & $.$
& $1$ &  $1$ \hspace{0.1cm} & $.$ \hspace{0.1cm} & $.$  & $.$  &
$1$ \hspace{0.1cm} & $.$ \hspace{0.1cm} & $.$  & $.$    
\rule[-0.2cm]{0cm}{0.36cm}\\
\rule{0cm}{0.36cm}
$\sgn$ & $.$ & $.$ & $.$ & $.$ & $.$ & $.$ & $1$ & $.$ & $.$ & $.$ &
$1$ &  $.$ \hspace{0.1cm} & $1$ \hspace{0.1cm} & $.$  & $.$  &
$.$ \hspace{0.1cm} & $1$ \hspace{0.1cm} & $.$  & $.$    
\rule[-0.2cm]{0cm}{0.36cm}\\
\hline
\end{tabular}
\end{center}
\end{table}

\subsection{Relations} \label{relations}

Let $\ell>3$ be a prime number. By Subsection~\ref{blocksbas}, 
the set of unipotent irreducible characters of $G$ is an ordinary
basic set for the unipotent blocks $\mathcal{E}_\ell(G, 1)$. In
particular, for every non-unipotent irreducible character 
$\chi \in \mathcal{E}_\ell(G, 1)$, there are $a_{\chi,j} \in \Z$ such
that 
\begin{equation} \label{eq:rel}
\breve\chi = \sum_{j=1}^{21} a_{\chi,j} \cdot \chi_j.
\end{equation}
Here, we have identified $\chi_j$ and $\breve\chi_j$. We
call~(\ref{eq:rel}) a \textit{relation} with respect to the basic set
of unipotent characters. Such relations can be used to derive lower
bounds for the decomposition numbers of the unipotent characters or
to prove the indecomposability of certain projective modules.
For the relevant primes $\ell$, the relations with respect to the
basic set of unipotent characters are given in
Tables~\ref{tab:rels1}-\ref{tab:rels4} in the Appendix. 

The rows of these tables are labeled by the
non-unipotent irreducible characters in $\mathcal{E}_\ell(G,1)$ and
the numbering of the columns corresponds to the unipotent irreducible
characters of $G$. The entry in the row corresponding to 
$\chi \in \mathcal{E}_\ell(G,1)$ and the $j$-th column~is equal to 
$a_{\chi,j}$. In these tables zeros are replaced by dots.
The data in Tables~\ref{tab:rels1}-\ref{tab:rels4} can easily be
computed from the character table of $G$ in CHEVIE. 

\subsection{Jordan decomposition of Brauer characters} \label{jordan}

Our main tool to determine the decomposition numbers of the
non-unipotent blocks of $G$ is C.~Bonnaf{\'e}'s and R.~Rouquier's
modular version of the Jordan decomposition of
characters~\cite[Theorem~11.8]{BonnafeRouquier}. Let $\ell$ be an odd
prime. For $1 \le i \le 18$, let $s_i \in G^*$ be a semisimple
$\ell'$-element of type $g_i$, see~\cite[Table~B.10]{HimstedtHuangDade2F4}. 
We consider the centralizer of $s_i$ in the algebraic
group~$\G^*$. Since the center $Z(\G)$ is connected, the centralizer 
$C_{\G^*}(s_i)$ is also connected. Now suppose $i \neq 5$. Then, 
$C_{\G^*}(s_i)$ can be realized as the centralizer of a semisimple
element of an order not divisible by $2$ and $3$. So
by~\cite[Proposition~13.16]{CabEng}, the centralizer $C_{\G^*}(s_i)$
is a Levi subgroup of~$\G^*$ and we can
apply~\cite[Theorem~11.8]{BonnafeRouquier}. Thus, for all $i \neq 5$, 
Lusztig induction induces a 1-1-correspondence between the sets 
$\mathcal{E}_\ell(G, s_i)$ and $\mathcal{E}_\ell(C_{G^*}(s_i), 1)$ of
ordinary irreducible characters, and with respect to this
correspondence the decomposition matrices of 
$\mathcal{E}_\ell(G, s_i)$ and $\mathcal{E}_\ell(C_{G^*}(s_i), 1)$
coincide (after a suitable ordering of the columns).

The Dynkin type of $C_{G^*}(s_i)$ is given
in~\cite[Table~B.9]{HimstedtHuangDade2F4}, see
also~\cite[Lemma~D.3.2]{HissHabil}. For $i \neq 1,5$, it is
${^2B_2}$, $A_1$ or $A_0$ and so the decomposition numbers of
$\mathcal{E}_\ell(C_{G^*}(s_i),1)$ are known, see~\cite{Burkhardt}
and~\cite{James}. Thus, Bonnaf{\'e}'s and Rouquier's Jordan
decomposition gives us the decomposition numbers for all non-unipotent
blocks of $G$ except for $\mathcal{E}_\ell(G, s_5)$. 
Note that the semisimple element $s_5$ has order $3$ and
is isolated in the sense of \cite[1.B]{BonnafeQuasi}. This follows
from \cite[Corollary~1.4]{BonnafeQuasi} and the data
in~\cite[Table~B.9]{HimstedtHuangDade2F4}. So, $C_{\G^*}(s_5)$ is not
contained in a Levi subgroup of a proper parabolic subgroup of $\G^*$
and Bonnaf{\'e}'s and Rouquier's theorem does not apply.


\subsection{Proof of Theorem~\ref{thm:decnum1}}

Suppose $\ell \, | \, q^2-1$. By~\cite[Proposition~6.3.4]{HissHabil},
$\ell$ is a linear prime for $G$, so that we can
apply~\cite[Theorem~6.3.7]{HissHabil}. 
We begin with the decomposition numbers of the unipotent blocks of $G$
in Table~\ref{tab:decuni1}. The numbers in the right most column are
obtained by counting the elements of $\ell$-power order in $G^*$,
which can easily be done using the representatives
in~\cite[Table~B.10]{HimstedtHuangDade2F4}. The decomposition numbers 
in Table~\ref{tab:decuni1} follow from~\cite[Theorem~6.3.7]{HissHabil}
and the relations in Table~\ref{tab:rels1}. 

The modular Harish-Chandra series can be determined as follows: By
Subsection~\ref{hecke}, the Brauer characters $\phi_1$, $\phi_4$,
$\phi_5$, $\phi_6$, $\phi_7$, $\phi_{18}$, $\phi_{21}$ are the
irreducible Brauer characters in the principal
series. By~\cite[Lemma~4.3]{HissHC}, the Brauer characters
$\phi_i$, $8 \le i \le 17$, are cuspidal.
The Levi subgroup $L_b$ has $4$ unipotent irreducible Brauer
characters: the restrictions of the trivial character and of
the Steinberg character and the restrictions of the two
ordinary cuspidal unipotent characters of degree
$\frac{q}{\sqrt{2}}(q^2-1)$ to the $\ell$-regular elements
(this follows from the decomposition numbers in~\cite{Burkhardt}).
The first two of these Brauer characters are in the principal series,
the latter two are cuspidal Brauer characters. 
Again by~\cite[Lemma~4.3]{HissHC}, we know that $\phi_2$, $\phi_3$,
$\phi_{19}$, $\phi_{20}$ are not cuspidal. Now, the claim about
the Harish-Chandra series of these four characters follows from the
remarks in Subsection~\ref{char2F4}.

Finally, we treat the decomposition numbers for the non-unipotent blocks
in Tables~\ref{tab:dec2}, \ref{tab:dec3} and
\ref{tab:decg5cyc}-\ref{tab:decnilp}. Let $s \neq 1$ be a semisimple
$\ell'$-element of $G^*$. If $s$ is not of type $g_5$, then the
decomposition numbers of $\mathcal{E}(G, s)$ are clear by
Bonnaf{\'e}'s and Rouquier's Jordan decomposition, see
Subsection~\ref{jordan}. This proves the decomposition numbers in
Tables~\ref{tab:dec2}, \ref{tab:dec3} and
\ref{tab:dec6}-\ref{tab:decnilp}. The decomposition numbers in
Table~\ref{tab:decg5cyc} follow from the fact that $\ell$ is a linear
prime and \cite[Theorem~6.3.7]{HissHabil}. This completes the proof of
Theorem~\ref{thm:decnum1}. \hfill $\Box$


\subsection{Proof of Theorem~\ref{thm:decnum2}}

Suppose $\ell>3$ and $\ell \, | \, q^2+1$. We start with the
decomposition numbers of the unipotent blocks of $G$ 
in Table~\ref{tab:decuni2}. 
As in the proof of Theorem~\ref{thm:decnum1}, the numbers in the right
most column are obtained by counting the elements of $\ell$-power
order in $G^*$, which can easily be done using the representatives
in~\cite[Table~B.10]{HimstedtHuangDade2F4}. Note that under the
assumptions of the theorem, we have $\frac{1}{48}(\ell^f-1)(\ell^f-11)>0$ 
if and only if $\ell \neq 11$ or $n \equiv 27$ mod $55$ where
$q^2=2^{2n+1}$. Consequently, the relation in the last row of
Table~\ref{tab:rels2} exists if and only if $\ell \neq 11$ or 
$n \equiv 27$ mod $55$.

Using the relations in Table~\ref{tab:rels2}, the rows of the
decomposition matrix of $\mathcal{E}_\ell(G, 1)$ corresponding to the
non-unipotent irreducible characters can be written as linear
combinations of the rows corresponding to the unipotent irreducible
characters. So, it is sufficient to determine the decomposition
numbers of the unipotent irreducible characters $\chi_1, \dots, \chi_{21}$. 
We construct projective characters $\Psi_1$, \dots, $\Psi_{21}$ of $G$ 
according to Table~\ref{tab:proj2} in the Appendix and compute the
scalar products $(\chi_i, \Psi_j)_G$ for $1 \le i,j \le 21$ using
CHEVIE. These scalar products are given in Table~\ref{tab:scalar2} in
the Appendix. We already see from Table~\ref{tab:scalar2} that the
decomposition matrix of $\mathcal{E}_\ell(G, 1)$ has a lower
unitriangular shape giving us a natural bijection between the set of
ordinary unipotent irreducible characters and the
set of irreducible Brauer characters in $\mathcal{E}_\ell(G, 1)$. For
$1 \le i \le 21$, let $\phi_i$ be the irreducible Brauer character
corresponding to $\chi_i$ and $\Phi_i$ the character of the
corresponding PIM. 

From Table~\ref{tab:scalar2}, we already get the assertions about all
$\Phi_i$, $i \neq 1,4,7,8,9,17,18$ in the decomposition matrix
Table~\ref{tab:decuni2}. Let $a$ be the multiplicity of $\chi_{18}$ in
$\Phi_{17}$ and $b,c,d,e$ the multiplicity of $\chi_{21}$ in $\Phi_8$, $\Phi_9$,
$\Phi_{17}$, $\Phi_{18}$, respectively. The entries in
Table~\ref{tab:scalar2} lead to the upper bounds for
$a,b,c,d,e$ in Theorem~\ref{thm:decnum2}, and the lower bounds follow
from the decomposition numbers of the non-unipotent characters in the
last three rows of Table~\ref{tab:decuni2}, since decomposition
numbers are non-negative. So we have shown all assertions about
$\Phi_{8}$, $\Phi_{9}$, $\Phi_{17}$, $\Phi_{18}$ in
Theorem~\ref{thm:decnum2}. Furthermore, \cite[Corollary~4.10]{Dipper} and
Proposition~\ref{prop:hecke_decnum} imply the assertions about $\Phi_1$ and
$\Phi_7$. 

So, we are only left with the decomposition numbers in the fourth
column of Table~\ref{tab:decuni2}. All of them are clear from
Table~\ref{tab:scalar2} except for the decomposition numbers 
$(\chi_7, \Phi_4)_G$, $(\chi_{21}, \Phi_4)_G \in \{0,1\}$. Since
$\Phi_7$ is not a summand of $\Psi_4$, we get $(\chi_7, \Phi_4)_G=1$.

To determine the decomposition number $(\chi_{21}, \Phi_4)_G$, we
collect some information on the modular Harish-Chandra series of $G$.
By Proposition~\ref{prop:hecke_decnum}, we already know that $\phi_1$,
$\phi_5$, $\phi_6$, $\phi_7$ are the Brauer characters in the
principal series. 
The Levi subgroup $L_b$ has two cuspidal unipotent irreducible Brauer
characters: the restrictions of the two ordinary cuspidal unipotent
characters of degree $\frac{q}{\sqrt{2}}(q^2-1)$ to the set of
$\ell$-regular elements (this follows from the decomposition numbers
in~\cite{Burkhardt} and \cite[Lemma~4.3]{HissHC}). We write
$\varphi_a$ and~$\varphi_b$ for these two Brauer characters 
and $\Phi_a$, $\Phi_b$ for the characters of the corresponding PIMs
of~$L_b$. Let ${^2B_2}[a]$, ${^2B_2}[b]$ be the modular
Harish-Chandra series of $G$ corresponding 
to~$\varphi_a$ and $\varphi_b$, respectively.
Let $u(\Phi_a)$ be the character of the unipotent quotient of $\Phi_a$
and $u(R_{L_b}^G(\Phi_a))$ the character of the unipotent quotient of
the Harish-Chandra induced character $R_{L_b}^G(\Phi_a)$,
see~\cite[Section~6]{HissHC}. By~\cite[Lemma~6.1]{HissHC},
Harish-Chandra induction commutes with taking unipotent quotients. 
Using the class fusions in~\cite{HimstedtHuang2F4MaxParab} and CHEVIE,
we compute  
\[
u(R_{L_b}^G(\Phi_a)) = R_{L_b}^G(u(\Phi_a)) = {_{P_b}\chi_2}(0)^G =
\chi_2 + \chi_{19}. 
\]
Thus, from~\cite[Section~5]{HissHC} we see that $\phi_2$ and $\phi_{19}$
are the only irreducible Brauer characters in the series
${^2B_2}[a]$. Analogously, by computing $u(R_{L_b}^G(\Phi_b))$
we see that~$\phi_3$ and $\phi_{20}$ are the only irreducible Brauer
characters in the series ${^2B_2}[b]$.

Next, we consider the modular Harish-Chandra series $A_1$. Let
$\Phi_{St}$ be the character of the PIM corresponding to the modular
Steinberg character of $L_a$. Using CHEVIE, we can compute the
unipotent quotient 
\begin{equation} \label{eq:HCLa}
u(R_{L_a}^G(\Phi_{St}))=R_{L_a}^G(u(\Phi_{St}))={_{P_a}\chi_2}(0)^G= 
\chi_4+\chi_5+\chi_6+\chi_7+\chi_{21}.
\end{equation}
By the construction in~\cite[Section~4]{HimstedtHuang2F4Borel},
the character ${_{B}\chi_{8}}^G$ is the Gelfand-Graev character of $G$.
Hence, Table~\ref{tab:proj2} implies that $\phi_{21}$ is the modular
Steinberg character of $G$ and it follows
from~\cite[Theorem~4.2]{GHMCusp} that $\phi_{21}$ is cuspidal. From
the decomposition~(\ref{eq:HCLa}) we get that $\phi_4$ is the only
irreducible Brauer character of~$G$ in the Harish-Chandra series $A_1$
and $(\chi_{21}, \Phi_4)_G=1$. Consequently, the remaining Brauer
characters $\phi_i$ for $8 \le i \le 18$ have to be cuspidal. This
proves all assertions about the unipotent blocks in
Theorem~\ref{thm:decnum2}. 

Finally, we deal with the decomposition numbers of the non-unipotent
irreducible characters of $G$ in
Tables~\ref{tab:dec2}-\ref{tab:decg5noncyc} and 
\ref{tab:dec6}-\ref{tab:decnilp}. Let $s \neq 1$ be a semisimple
$\ell'$-element of $G^*$. If $s$ is not of type $g_5$, then the
decomposition numbers of $\mathcal{E}(G, s)$ are clear by
Bonnaf{\'e}'s and Rouquier's Jordan decomposition, see
Subsection~\ref{jordan}. This proves the decomposition numbers in
Tables~\ref{tab:dec2}, \ref{tab:dec3} and
\ref{tab:dec6}-\ref{tab:decnilp}. The decomposition numbers in
Table~\ref{tab:decg5noncyc} can be determined as follows:
Suppose $s_5 \in G^*$ is a semisimple $\ell'$-element of type $g_5$. 
We get an approximation to the decomposition matrix of 
$\mathcal{E}(G, s_5)$ from the following scalar products of basic set
characters with projective characters:

\smallskip

\begin{center}
\begin{tabular}{l|ccc}
\hline
\rule{0cm}{0.4cm}
& $R_{L_a}^G(\gamma_{L_a})$ & ${_{P_b}\chi_{22}}^G$ & ${_B\chi_{8}}^G$ \\
\hline
\hspace{0.01cm} $\chi_{5,1}$ & $1$ & $.$ & $.$ 
\rule[-0.2cm]{0cm}{0.36cm}\\
\rule{0cm}{0.36cm}
$\chi_{5,q^2(q^2-1)}$ & $.$ & $1$ & $.$
\rule[-0.2cm]{0cm}{0.36cm}\\
\rule{0cm}{0.36cm}
$\chi_{5,St}$ & $1$ & $\frac{q^2-\sqrt{2}q}{4}$ & $1$
\rule[-0.2cm]{0cm}{0.36cm}\\
\hline
\end{tabular}
\end{center}

\smallskip

\noindent Here, $\gamma_{L_a}$ is the Gelfand-Graev character of
$L_a$, the character ${_B\chi_{8}}^G$ is the Gelfand-Graev character
of $G$ and ${_{P_b}\chi_{22}}$ is projective since $\ell \nmid |P_b|$.
This already gives the upper bound for $a'$ in
Theorem~\ref{thm:decnum2}. Let $\Phi_{5,1}, \Phi_{5,2}, \Phi_{5,3}$ be
the characters of the PIMs corresponding to the irreducible Brauer
characters $\phi_{5,1}$, $\phi_{5,2}$, $\phi_{5,3}$. We have the
following relations on the $\ell$-regular elements: 
\begin{eqnarray*}
\breve\chi_{6,1} & = & \breve\chi_{5,1} + \breve\chi_{5,q^2(q^2-1)},\\
\breve\chi_{6,St} & = & -\breve\chi_{5,q^2(q^2-1)} + \breve\chi_{5,St},\\
\breve\chi_{15,1} & = & -\breve\chi_{5,1} -2\cdot\breve\chi_{5,q^2(q^2-1)} + \breve\chi_{5,St}.
\end{eqnarray*}
Via these relations, we can express the decomposition numbers of
$\chi_{6,1}$, $\chi_{6,St}$, $\chi_{15,1}$ in terms of the
decomposition numbers of $\chi_{5,1}$, $\chi_{5,q^2(q^2-1)}$ and
$\chi_{5,St}$. Since decomposition numbers are non-negative, we get 
$(\chi_{5,St}, \Phi_{5,1})_G=1$ and $a' \ge 2$.
This completes the proof of Theorem~\ref{thm:decnum2}. \hfill $\Box$


\subsection{Proof of Theorem~\ref{thm:decnum3}}

Suppose $\ell \, | \, q^2+\sqrt{2}q+1$. The decomposition numbers
of the unipotent blocks of $G$ in Table~\ref{tab:decuni3} can be
determined as follows: As in the proof of Theorem~\ref{thm:decnum1},
the numbers in the right most column are obtained by counting the
elements of $\ell$-power order in $G^*$, which can easily be done
using the representatives in~\cite[Table~B.10]{HimstedtHuangDade2F4}. 
Note that under the assumptions of the theorem, we have
$\frac{1}{96}(\ell^f-1)(\ell^f-5)>0$ if and only if $\ell \neq 5$ or
$n \equiv 7$ or $n \equiv 12$ mod $20$ where
$q^2=2^{2n+1}$. Consequently, the relation in the last row of 
Table~\ref{tab:rels3} exists if and only if $\ell \neq 5$ or 
$n \equiv 7$ or $n \equiv 12$ mod $20$.

Using the relations in Table~\ref{tab:rels3}, the rows of the
decomposition matrix of $\mathcal{E}_\ell(G, 1)$ corresponding to the
non-unipotent irreducible characters can be written as linear
combinations of the rows corresponding to the unipotent irreducible
characters. So, it is sufficient to determine the decomposition
numbers of the unipotent irreducible characters $\chi_1, \dots, \chi_{21}$. 
We construct projective characters $\Psi_1$, \dots, $\Psi_{21}$ of $G$ 
according to Table~\ref{tab:proj3} and compute the scalar products 
$(\chi_i, \Psi_j)_G$ for $1 \le i,j \le 21$ using CHEVIE. In
Table~\ref{tab:proj3}, $\Phi_a$ and $\Phi_b$ are the characters of the
projective covers of the unipotent irreducible Brauer characters
$\varphi_a$ and $\varphi_b$ of $L_b$, respectively, and $\Phi_{St}$
is the character of the projective cover of the modular Steinberg
character $\varphi_{St}$ of $L_b$; see the comments in Subsection~\ref{case3}. For
the calculations in CHEVIE, we only deal with the unipotent
quotients. The scalar products $(\chi_i, \Psi_j)_G$ are given in
Table~\ref{tab:scalar3} in the Appendix. The symbol $*$ in this table
means some non-negative integer depending on $q$. We already see from
Table~\ref{tab:scalar3} that the decomposition matrix of
$\mathcal{E}_\ell(G, 1)$ has a lower unitriangular shape giving us a
natural bijection between the set of ordinary unipotent irreducible
characters and the set of irreducible Brauer characters in
$\mathcal{E}_\ell(G, 1)$. For $1 \le i \le 21$, let 
$\phi_i$ be the irreducible Brauer character corresponding to $\chi_i$
and $\Phi_i$ the character of the corresponding PIM.  

From Table~\ref{tab:scalar3}, we already get the assertions about
$\Phi_i$ for $i=6,7,8,15,16,21$ in the decomposition matrix
Table~\ref{tab:decuni3}. Furthermore, \cite[Corollary~4.10]{Dipper}
and Proposition~\ref{prop:hecke_decnum} imply the assertions about
$\Phi_1$ and $\Phi_4$ and we see that $\phi_1$, $\phi_4$, $\phi_6$ and
$\phi_7$ are the irreducible Brauer character in the principal series.

We introduce abbreviations for some of the decomposition numbers:
Let $a$ and $c$ be the multiplicity of $\chi_{18}$ in $\Phi_{9}$ and
$\Phi_{17}$, respectively. Furthermore, let $r$, $s$, $v$, $w$ be the
multiplicity of $\chi_{21}$ in $\Phi_9$, $\Phi_{10}$, $\Phi_{17}$,
$\Phi_{18}$, respectively. We construct three additional projective
characters of $G$: 
\[
\Psi_{13}' := \chi_3 \otimes \chi_8, \quad 
\Psi_{14}' := \chi_2 \otimes \chi_8, \quad 
\Psi_{18}' := \chi_2 \otimes \chi_6.
\]
These characters are projective since $\chi_6$ and $\chi_8$ have
defect~$0$. Using CHEVIE, we compute the scalar products 
$(\Psi_i', \chi_j)_G$ for $i = 13,14,18$ and $1 \le j \le 21$,
see Table~\ref{tab:addscal}. For all pairs $(\Psi_i',\chi_j)$ not
listed in this table, we have $(\Psi_i', \chi_j)_G=0$. 
The scalar products for $\Psi_{13}'$, $\Psi_{14}'$ imply:
\[
(\chi_{18}, \Phi_{13})_G=(\chi_{20}, \Phi_{13})_G=(\chi_{18},
\Phi_{14})_G=(\chi_{19}, \Phi_{14})_G=0. 
\]

\begin{table}[!ht]
\caption[]{The scalar products $(\Psi_i', \chi_j)_G$.} 
\label{tab:addscal}

\begin{center}
\begin{tabular}{c|ccccccc} \hline
\rule{0cm}{0.45cm}
& $\chi_7$ & $\chi_{13}$ & $\chi_{14}$ & $\chi_{18}$ & $\chi_{19}$ & $\chi_{20}$ & $\chi_{21}$
\rule[-0.2cm]{0cm}{0.3cm}\\
\hline
\rule{-0.12cm}{0.4cm}
$\Psi_{13}'$ & $0$ & $\frac{q}{\sqrt{2}}$ & $0$ & $0$ &
$\frac{q^2+\sqrt{2}q}{4}$ & $0$ & $\frac{\sqrt{2}q(q^2+3\sqrt{2}q+4)}{24}$
\rule[-0.2cm]{0cm}{0.3cm}\\
\rule{-0.12cm}{0.4cm}
$\Psi_{14}'$ & $0$ & $0$ & $\frac{q}{\sqrt{2}}$ & $0$ & $0$ &
$\frac{q^2+\sqrt{2}q}{4}$ & $\frac{\sqrt{2}q(q^2+3\sqrt{2}q+4)}{24}$
\rule[-0.2cm]{0cm}{0.3cm}\\
\rule{-0.12cm}{0.4cm}
$\Psi_{18}'$ & $\frac{q}{\sqrt{2}}$ & $\frac{q}{\sqrt{2}}$ & $0$
& $\frac{q}{\sqrt{2}}$ & $\frac{q^2+2}{2}$ & $\frac{q^2+\sqrt{2}q}{4}$
& $\frac{\sqrt{2}q(q^2+\sqrt{2}q+4)}{8}$
\rule[-0.2cm]{0cm}{0.3cm}\\
\hline
\end{tabular}
\end{center}
\end{table}

The pairs $(\chi_{11}, \chi_{12})$, $(\chi_{13}, \chi_{14})$,
$(\chi_{19}, \chi_{20})$ are pairs of complex conjugate characters,
and so are the pairs $(\Phi_{11}, \Phi_{12})$, 
$(\Phi_{13}, \Phi_{14})$, $(\Phi_{19}, \Phi_{20})$. On the other hand,
$\chi_9$, $\chi_{17}$ and $\chi_{18}$ are real-valued, and so are
$\Phi_9$, $\Phi_{17}$ and $\Phi_{18}$. Hence, we have the
following identities of decomposition numbers:
\begin{center}
\begin{tabular}{cccccccccc}
$(\chi_{18}, \Phi_{11})_G$ & $=$ & $(\chi_{18}, \Phi_{12})_G$ & $=:$ & $b$,\qquad
$(\chi_{19}, \Phi_{9})_G$  & $=$ & $(\chi_{20}, \Phi_{9})_G$ & $=:$ & $d$, \\
$(\chi_{19}, \Phi_{11})_G$ & $=$ & $(\chi_{20}, \Phi_{12})_G$ & $=:$ & $e$,\qquad
$(\chi_{20}, \Phi_{11})_G$ & $=$ & $(\chi_{19}, \Phi_{12})_G$ & $=:$ & $g$,\\
$(\chi_{19}, \Phi_{13})_G$ & $=$ & $(\chi_{20}, \Phi_{14})_G$ & $=:$ & $h$,\qquad
$(\chi_{19}, \Phi_{17})_G$ & $=$ & $(\chi_{20}, \Phi_{17})_G$ & $=:$ & $i$,\\
$(\chi_{19}, \Phi_{18})_G$ & $=$ & $(\chi_{20}, \Phi_{18})_G$ & $=:$ &$j $,\qquad
$(\chi_{21}, \Phi_{11})_G$ & $=$ & $(\chi_{21}, \Phi_{12})_G$ & $=:$ & $t$,\\
$(\chi_{21}, \Phi_{13})_G$ & $=$ & $(\chi_{21}, \Phi_{14})_G$ & $=:$ & $u$,\qquad
$(\chi_{21}, \Phi_{19})_G$ & $=$ & $(\chi_{21}, \Phi_{20})_G$ & $=:$ & $x$.
\end{tabular}
\end{center}

\medskip

From the scalar products in Table~\ref{tab:scalar3}, we readily get
the upper bounds in Theorem~\ref{thm:decnum3} except for the
decomposition numbers $h$, $j$, $u$, $w$. The scalar products with the
projectives in Table~\ref{tab:proj3} lead to upper bounds for these
remaining four decomposition numbers. However, these bounds are not
good enough for our purposes. Instead, we use the projectives in
Table~\ref{tab:addscal}. From this table, we get 
\[
\Psi_{13}' = \frac{q}{\sqrt{2}} \Phi_{13} + A \cdot \Phi_{19} + B
\cdot \Phi_{21} + \Phi,
\]
where $A, B$ are non-negative integers and $\Phi$ is a projective
character belonging to non-unipotent blocks. Thus, we get:
$\frac{q^2+\sqrt{2}q}{4} = \frac{q}{\sqrt{2}} \cdot h + A \ge
\frac{q}{\sqrt{2}} \cdot h$ and so $h \le \frac{\sqrt{2}q}{4} +
\frac{1}{2}$. Since $h$ is an integer, it follows 
$h \le \frac{\sqrt{2}q}{4}$. For $\Psi_{18}'$, we get 
\[
\Psi_{18}' = \frac{q}{\sqrt{2}} \Phi_7 + \frac{q}{\sqrt{2}} \Phi_{13}
+ \frac{q}{\sqrt{2}} \Phi_{18} + A \cdot \Phi_{19} + B \cdot \Phi_{20} 
+ C \cdot \Phi_{21} + \Phi,
\]
where $A, B, C$ are non-negative integers and $\Phi$ is a projective
character belonging to non-unipotent blocks. Then, we get:
$\frac{q^2+\sqrt{2}q}{4} \ge \frac{q}{\sqrt{2}} \cdot j$ which implies
$j \le \frac{\sqrt{2}q}{4}$. In a similar way, one can obtain the
upper bounds for $u$ and $w$ from $\Psi_{13}'$ and $\Psi_{18}'$. 
The lower bounds for the decomposition numbers $a, b, \dots, x$ follow
from the decomposition numbers of the non-unipotent characters in the
last five rows of Table~\ref{tab:decuni3}, since decomposition
numbers are non-negative. This proves all assertions about $\Phi_i$
for $i=9,10,11,12,13,14,17,18,19,20$ in the decomposition matrix 
Table~\ref{tab:decuni3}.

So, we still have to complete the decomposition numbers in columns
$2$, $3$ and $5$ of the decomposition matrix. The relation
corresponding to $\chi_{10,1}$ and the fact that decomposition numbers
are non-negative implies $(\chi_5, \Phi_2)_G=(\chi_5, \Phi_3)_G=1$.
Then, $j \ge 1$ implies that $\Phi_{18}$ cannot be subtracted from
$\Psi_2$, $\Psi_3$ and $\Psi_5$, and we get 
$(\chi_{18}, \Phi_2)_G=(\chi_{18}, \Phi_3)_G=(\chi_{18}, \Phi_5)_G=1$.
From the relations corresponding to
$\chi_{10,\frac{q}{\sqrt{2}}(q^2-1)_a}$ and
$\chi_{10,\frac{q}{\sqrt{2}}(q^2-1)_b}$ and $\chi_{10,St}$, we get
$(\chi_{19}, \Phi_2)_G=(\chi_{20}, \Phi_3)_G=(\chi_{21},
\Phi_2)_G=(\chi_{21}, \Phi_3)_G=1$. 

By \cite[Theorem~4.2]{GHMCusp}, the modular Steinberg character of $G$
is cuspidal. So, the construction of $\Psi_5$ in Table~\ref{tab:proj3}
implies that the projective cover of $\phi_{21}$ is not a summand of
the projective module corresponding to $\Psi_5$. Hence, 
$(\chi_{21}, \Phi_5)_G=1$. We have already seen that $\phi_1$,
$\phi_4$, $\phi_6$ and $\phi_7$ are the irreducible Brauer characters
in the principal series. By the construction of the projectives in
Table~\ref{tab:proj3}, it is clear that $\phi_2$, $\phi_3$, $\phi_5$
are the only irreducible Brauer characters in the Harish-Chandra
series ${^2B_2}[a]$, ${^2B_2}[b]$, ${^2B_2}[\text{St}]$, respectively. This
finishes the proof of Theorem~\ref{thm:decnum3} for the unipotent blocks.

Next, we deal with the non-unipotent blocks in Tables~\ref{tab:dec2},
\ref{tab:dec3} and \ref{tab:decg5cyc}-\ref{tab:decnilp}. Let $s \neq
1$ be a semisimple $\ell'$-element of $G^*$. If $s$ is not of type
$g_5$, then the decomposition numbers of $\mathcal{E}(G, s)$ are clear
by Bonnaf{\'e}'s and Rouquier's Jordan decomposition, see
Subsection~\ref{jordan}. This proves the decomposition numbers in
Tables~\ref{tab:dec2}, \ref{tab:dec3} and
\ref{tab:dec6}-\ref{tab:decnilp}. The decomposition numbers in
Table~\ref{tab:decg5cyc} follow from the fact that $\chi_{5,1}$, 
$\chi_{5,q^2(q^2-1)}$, $\chi_{5,St}$ have defect $0$. This completes
the proof of Theorem~\ref{thm:decnum3}. \hfill $\Box$


\subsection{Proof of Theorem~\ref{thm:decnum4}}

The proof is similar to the proof of Theorem~\ref{thm:decnum3}. So we
only give a brief sketch. Suppose $\ell \, | \, q^2-\sqrt{2}q+1$.
The numbers in the right most column of the decomposition matrix
Table~\ref{tab:decuni4} are obtained by counting the elements of
$\ell$-power order in~$G^*$. Under the assumptions of the theorem, we have
$\frac{1}{96}(\ell^f-1)(\ell^f-5)>0$ if and only if $\ell \neq 5$ or
$n \equiv 2$ or $n \equiv 17$ mod $20$ where $q^2=2^{2n+1}$. Thus, the
relation in the last row of Table~\ref{tab:rels4} exists if and only
if $\ell \neq 5$ or $n \equiv 2$ or $n \equiv 17$ mod $20$.

We construct projective characters $\Psi_1$, \dots, $\Psi_{21}$ of $G$ 
according to Table~\ref{tab:proj4} and compute the scalar products 
$(\chi_i, \Psi_j)_G$ for $1 \le i,j \le 21$ using CHEVIE. Here,
$\Phi_a$ and $\Phi_b$ are the characters of the
projective covers of the unipotent irreducible Brauer characters
$\varphi_a$ and $\varphi_b$ of $L_b$, respectively, and $\Phi_{St}$
is the character of the projective cover of the modular Steinberg
character $\varphi_{St}$ of $L_b$; see the comments in
Subsection~\ref{case4}. In this way, we obtain the approximation
Table~\ref{tab:scalar4} to the decomposition matrix of the unipotent
characters. The unitriangular shape gives a natural bijection between
the set of ordinary unipotent irreducible characters and the set of
irreducible Brauer characters in $\mathcal{E}_\ell(G, 1)$. For 
$1 \le i \le 21$, let $\phi_i$ be the irreducible Brauer character
corresponding to $\chi_i$ and $\Phi_i$ the character of the corresponding PIM.  

From Table~\ref{tab:scalar4}, we get the assertions about
$\Phi_i$ for $i=5,7,9,15,16,21$ in the decomposition matrix
Table~\ref{tab:decuni4}. Furthermore, \cite[Corollary~4.10]{Dipper}
and Proposition~\ref{prop:hecke_decnum} imply the assertions about
$\Phi_1$ and $\Phi_4$ and we see that $\phi_1$, $\phi_4$, $\phi_5$ and
$\phi_7$ are the irreducible Brauer character in the principal series.
The characters 
\[
\Psi_{11}' := \chi_3 \otimes \chi_9, \quad 
\Psi_{11}'':= {_{P_b}\chi_{25}}^G, \quad 
\Psi_{12}' := \chi_2 \otimes \chi_9, \quad 
\Psi_{18}' := \chi_2 \otimes \chi_5
\]
are projective since $\chi_5$, $\chi_9$ and ${_{P_b}\chi_{25}}$ have
defect~$0$. Using CHEVIE, we compute the scalar products of these
projectives with the ordinary unipotent characters of $G$, 
see Table~\ref{tab:addscal2}. All scalar products $(\Psi_i',\chi_j)$
and $(\Psi_{11}'',\chi_j)$ not listed in this table are zero. 

\begin{table}[!ht]
\caption[]{The scalar products $(\Psi_i', \chi_j)_G$ 
and $(\Psi_{11}'', \chi_j)_G$.}  
\label{tab:addscal2}

\begin{center}
\begin{tabular}{c|ccccccc} \hline
\rule{0cm}{0.45cm}
& $\chi_7$ & $\chi_{11}$ & $\chi_{12}$ & $\chi_{18}$ & $\chi_{19}$ & $\chi_{20}$ & $\chi_{21}$
\rule[-0.2cm]{0cm}{0.3cm}\\
\hline
\rule{-0.12cm}{0.4cm}
$\Psi_{11}'$ & $0$ & $\frac{q}{\sqrt{2}}$ & $0$ & $0$ &
$\frac{q^2-\sqrt{2}q}{4}$ & $0$ & $\frac{\sqrt{2}q(q^2-3\sqrt{2}q+4)}{24}$
\rule[-0.2cm]{0cm}{0.3cm}\\
\rule{-0.12cm}{0.4cm}
$\Psi_{11}''$ & $0$ & $1$ & $0$ & $0$ &
$\frac{q}{\sqrt{2}}$ & $0$ & $\frac{q^2-\sqrt{2}q}{4}$
\rule[-0.2cm]{0cm}{0.3cm}\\
\rule{-0.12cm}{0.4cm}
$\Psi_{12}'$ & $0$ & $0$ & $\frac{q}{\sqrt{2}}$ & $0$ & $0$ &
$\frac{q^2-\sqrt{2}q}{4}$ & $\frac{\sqrt{2}q(q^2-3\sqrt{2}q+4)}{24}$
\rule[-0.2cm]{0cm}{0.3cm}\\
\rule{-0.12cm}{0.4cm}
$\Psi_{18}'$ & $\frac{q}{\sqrt{2}}$ & $\frac{q}{\sqrt{2}}$ & $0$
& $\frac{q}{\sqrt{2}}$ & $\frac{q^2+2}{2}$ & $\frac{q^2-\sqrt{2}q}{4}$
& $\frac{\sqrt{2}q(q^2-\sqrt{2}q+4)}{8}$
\rule[-0.2cm]{0cm}{0.3cm}\\
\hline
\end{tabular}
\end{center}
\end{table}

The scalar products for $\Psi_{11}'$, $\Psi_{12}'$ imply: 
\[
(\chi_{18}, \Phi_{11})_G=(\chi_{20}, \Phi_{11})_G=(\chi_{18},
\Phi_{12})_G=(\chi_{19}, \Phi_{12})_G=0. 
\]
From the scalar products in Table~\ref{tab:scalar4} and
Table~\ref{tab:addscal2} we get the upper bounds for the decomposition 
numbers $a$, $b$, \dots, $v$, $w$ in the same way as in the proof of
Theorem~\ref{thm:decnum3}. 
The only difference is the upper bound for $x$ which can be derived as
follows: From Table~\ref{tab:addscal2}, we get 
\[
\Psi_{11}'' = \Phi_{11} + A \cdot \Phi_{19} + B \cdot \Phi_{21} +
\Phi, 
\]
where $A, B$ are non-negative integers and $\Phi$ is a projective
character belonging to non-unipotent blocks. So we get:
$e + A = \frac{q}{\sqrt{2}}$ and 
$t + A \cdot x + B = \frac{q^2-\sqrt{2}q}{4}$. The first equation and 
the upper bound $e \le \frac{\sqrt{2}q-4}{4}$ imply
$A \ge \frac{q}{2\sqrt{2}}+1$ and the the second equation gives:
\[
A \cdot x \le t + A \cdot x + B = \frac{q^2-\sqrt{2}q}{4}.
\]
Hence $x \le \frac{q^2-\sqrt{2}q}{4A} \le \frac{q}{\sqrt{2}} - 3 +
\frac{12}{\sqrt{2}q+4}$. Since~$x$ is an integer, it follows 
$x \le \frac{q}{\sqrt{2}}-2$. 
The proof of the remaining statements is analogous to the proof of 
Theorem~\ref{thm:decnum3}. \hfill $\Box$


\subsection{Proof of Theorem~\ref{thm:decnuml3}}

Suppose $\ell = 3$. As in the proof of Theorem~\ref{thm:decnum1}, the
numbers in the right most column of Table~\ref{tab:decunil3} are
obtained by counting the elements of $3$-power order in $G^*$. 
Note that under the assumptions of the theorem, we have
$\frac{1}{2}(3^f-3)>0$ if and only if $n \equiv 1$ or $n \equiv 4$
mod $6$ where $q^2=2^{2n+1}$. Consequently, the relations corresponding to
$\chi_{6,1}$ and $\chi_{6,St}$ in Table~\ref{tab:relsl3} exist if and
only if $n \equiv 1$ or $n \equiv 4$ mod~$6$.
Furthermore, $\frac{1}{48}(3^f-3)(3^f-9)>0$ 
if and only if $n \equiv 4$ or $n \equiv 13$ mod $18$. So the relation
in the last row of Table~\ref{tab:relsl3} exists if and 
only if $n \equiv 4$ or $n \equiv 13$ mod $18$.

We choose the ordinary basic set
\begin{equation*}
\{\chi_1, \chi_2, \chi_3, \chi_4, \chi_{5,1}, \chi_5, \chi_6, \chi_7, \chi_8, 
\chi_{10}, \chi_{11}, \chi_{12}, \chi_{13}, \chi_{14}, \chi_{15}, \chi_{18}, 
\chi_{19}, \chi_{20}, \chi_{21}\}
\end{equation*}
of $\mathcal{E}_3(G, 1)$ as in Section~\ref{sec:decompmatl3}.
Using the relations in Table~\ref{tab:relsl3}, the rows of the
decomposition matrix of $\mathcal{E}_\ell(G, 1)$ corresponding to
the irreducible characters not belonging to the above basic set can
be written as linear combinations of the rows corresponding to the
irreducible characters in the above basic set. So, we only have to  
determine the decomposition numbers of the irreducible characters in
the above basic set. 

We construct projective characters $\Psi_i$ and $\Psi_{5,1}$ of $G$
according to Table~\ref{tab:projl3} in the Appendix and compute the 
scalar products of these projective characters with the ordinary
characters in the above basic set using CHEVIE, see
Table~\ref{tab:scalarl3} in the Appendix. We already get 
from Table~\ref{tab:scalarl3} that the decomposition matrix of
$\mathcal{E}_\ell(G, 1)$ has a lower unitriangular shape giving us a
natural bijection between the above ordinary basic set and the set of
irreducible Brauer characters in $\mathcal{E}_\ell(G, 1)$.
Let $\phi_i$ be the irreducible Brauer character corresponding
to~$\chi_i$ and $\phi_{5,1}$ be the irreducible Brauer character 
corresponding to $\chi_{5,1}$. We write $\Phi_i$ and~$\Phi_{5,1}$ for
the corresponding PIMs, respectively.

From Table~\ref{tab:scalarl3}, we get the assertions about all
PIMs except for $\Phi_i$, $i = 1$, $4$, $7$, $8$, $10$, $15$, $18$ and
$\Phi_{5,1}$ in the decomposition matrix Table~\ref{tab:decunil3}.
Let $a$ be the multiplicity of $\chi_{18}$ in $\Phi_{15}$ and
$b,c,d,e$ the multiplicity of $\chi_{21}$ in $\Phi_8$, $\Phi_{10}$,
$\Phi_{15}$, $\Phi_{18}$, respectively. Furthermore, let $x_i$ be the
multiplicity of $\chi_i$ in $\Phi_{5,1}$ for $i=7$, $8$, $10$, $15$,
$18$, $21$. The assertions about $\Phi_{5,1}$ follow from $\Psi_{5,1}$ by
subtracting defect~$0$ characters. The entries in
Table~\ref{tab:scalarl3} lead to the upper bounds for
$a$, $b$, $c$, $d$, $e$, $x_i$ in Theorem~\ref{thm:decnuml3}, and the
lower bounds follow from the decomposition numbers of the
characters not in the basic set in the lower part of
Table~\ref{tab:decunil3}. So we have shown all assertions about 
$\Phi_{5,1}$, $\Phi_{10}$, $\Phi_{15}$, $\Phi_{18}$ in
Theorem~\ref{thm:decnuml3}. Furthermore, \cite[Corollary~4.10]{Dipper}  
and Proposition~\ref{prop:hecke_decnum} imply the assertions about
$\Phi_1$ and $\Phi_7$. 
So, we are only left with the decomposition numbers 
\[
(\chi_{5,1}, \Phi_4)_G, (\chi_{7}, \Phi_4)_G, (\chi_{21}, \Phi_4)_G,
(\chi_{10}, \Phi_8)_G \in \{0,1\}.
\]
The relation for $\chi_9$ in Table~\ref{tab:relsl3} and the fact that
decomposition numbers are non-negative imply 
$(\chi_{5,1}, \Phi_4)_G = (\chi_{10}, \Phi_8)_G = 1$. Since
$\Phi_7$ is not a summand of $\Psi_4$, we can deduce $(\chi_7, \Phi_4)_G=1$.
The remaining assertions about the unipotent blocks, in particular,
the modular Harish-Chandra series and $(\chi_{21}, \Phi_4)_G=1$ can be
shown analogously to the proof of Theorem~\ref{thm:decnum2}.

The decomposition numbers of the non-unipotent irreducible characters
of $G$ in Tables~\ref{tab:dec2}, \ref{tab:dec3},
\ref{tab:dec6}-\ref{tab:decnilp} follow from Bonnaf{\'e}'s and 
Rouquier's Jordan decomposition, see Subsection~\ref{jordan}. Note
that for $\ell=3$, one does not have to deal with blocks corresponding to
semisimple elements of type $g_5$. This completes the proof of
Theorem~\ref{thm:decnuml3}. \hfill $\Box$


\section{Degrees of irreducible representations}
\label{sec:degrees}

For a finite group $S$ and an algebraically closed field $k$ of
characteristic $\ell \ge 0$, we write $d_\ell(S)$ for the smallest
degree of a nonlinear irreducible $kS$-representation. 
In~\cite[Problem~1.1]{TiepDurham}, P.~Tiep proposed the following
problem: 

\medskip

\noindent \textit{Given a finite quasisimple group $S$ and $\ell$,
determine $d_\ell(S)$ and all nontrivial irreducible
$kS$-representations of degree $d_\ell(S)$.}

\medskip

\noindent For related problems and applications of smallest degrees
see~\cite{TiepDurham} and \cite{TiepZalesskii}. 

\subsection{Smallest degrees of the Ree groups}
In this section, we consider $d_\ell(G)$ for the simple Ree groups 
$G = {^2F_4}(q^2)$, $q^2=2^{2n+1}$, $n > 0$.  
Of course, due to G.~Malle's ordinary character table, the smallest
degree $d_\ell(G)$ is known for $\ell = 0$ and all primes 
$\ell$ not dividing the order of $G$. For the defining 
characteristic, one has $d_2(G) = 26$, see~\cite{LuebeckSmlDegDef}. 
Using the Brauer trees in~\cite{HissTrees2F4},
F.~L\"ubeck~\cite{LuebeckSmlDeg} showed that $d_\ell(G)$ coincides
with $d_0(G)$ for all primes $\ell$ such that the Sylow
$\ell$-subgroups of $G$ are cyclic. In Theorem~\ref{thm:smalldeg}, we
are going to extend this result to all primes $\ell > 3$. 

\begin{theorem} \label{thm:smalldeg}
Let $G = {^2F_4}(q^2)$, $q^2=2^{2n+1}$, $n > 0$. For every prime
$\ell > 3$ one has
\[
d_\ell(G) = d_0(G) = \frac{q}{\sqrt{2}}(q^2-1)(q^2+1)^2(q^4-q^2+1)
\]
and up to isomorphism, there are two $kG$-modules of this
degree. Their Brauer characters are $\breve{\chi}_2$ and
$\breve{\chi}_3$.
\end{theorem}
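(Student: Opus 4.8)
The plan is to combine the decomposition matrices from Theorems~\ref{thm:decnum1}--\ref{thm:decnum4} with F.~L\"ubeck's result~\cite{LuebeckSmlDeg}, which already gives $d_\ell(G)=d_0(G)$ and the two modules realizing it whenever the Sylow $\ell$-subgroups of $G$ are cyclic, i.e.\ whenever $\ell$ divides $\phi_{12}\phi_{24}'\phi_{24}''$. So I may assume that $\ell>3$ satisfies one of the four conditions in~\eqref{eq:4cases}. From G.~Malle's ordinary character table one reads off that $d_0(G)=\frac{q}{\sqrt2}(q^2-1)(q^2+1)^2(q^4-q^2+1)$ is the smallest degree of a nontrivial $\chi\in\Irr(G)$ and that it is attained exactly by the complex conjugate pair $\chi_2,\chi_3$, and the decomposition matrices show that $\breve\chi_2$ and $\breve\chi_3$ are irreducible. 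Hence it suffices to prove that every irreducible Brauer character $\varphi$ of $G$ with $\varphi\neq\mathbf{1}_G$ satisfies $\varphi(1)\ge d_0(G)$, with equality only for $\breve\chi_2$ and $\breve\chi_3$; since $(K,R,k)$ is a splitting system this then yields the theorem, including the count of modules of degree $d_0(G)$.

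I would dispose of the non-unipotent blocks first. Let $s\neq 1$ be a semisimple $\ell'$-element of $G^*$. If $s$ is not of type $g_5$, then by Bonnaf{\'e}'s and Rouquier's Jordan decomposition (Subsection~\ref{jordan}) the decomposition matrix of $\mathcal E_\ell(G,s)$ agrees with that of $\mathcal E_\ell(C_{G^*}(s),1)$ and the Jordan correspondence rescales all character degrees by the factor $[G^*:C_{G^*}(s)]_{2'}$; consequently every irreducible Brauer character in $\mathcal E_\ell(G,s)$ has degree a positive integral multiple of $[G^*:C_{G^*}(s)]_{2'}$. For $s$ of type $g_5$ the relevant ordinary characters have defect $0$, so their Brauer characters coincide with the (known) ordinary ones. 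Using the centralizer structure in~\cite[Table~B.9]{HimstedtHuangDade2F4} together with Table~\ref{tab:nonuniG}, one checks that for every $s\neq 1$ the index $[G^*:C_{G^*}(s)]_{2'}$ is a polynomial in $q$ of much larger degree than $d_0(G)$ (the centralizers have twisted rank at most two), so all non-unipotent irreducible Brauer characters have degree strictly greater than $d_0(G)$.

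The unipotent blocks carry the real content. From the lower unitriangular matrices in Tables~\ref{tab:decuni1}--\ref{tab:decuni4} one computes the degrees $\phi_i(1)$ recursively via $\phi_i(1)=\chi_i(1)-\sum_{j<i}(\chi_i,\Phi_j)_G\,\phi_j(1)$, reading the $\chi_i(1)$ off Table~\ref{tab:unicharsG}. For the ordinarily cuspidal unipotent characters $\breve\chi_i$ is irreducible by Corollary~\ref{cor:ghconj}(b), so $\phi_i(1)=\chi_i(1)$, and one checks directly that $\chi_i(1)>d_0(G)$. For the rows of the matrix that are completely determined the same recursion gives $\phi_i(1)$ as an explicit polynomial in $q$, which one compares with $d_0(G)$. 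Finally, for the few rows that still involve the undetermined decomposition numbers bounded in Theorems~\ref{thm:decnum2}--\ref{thm:decnum4} (essentially the rows of the largest unipotent characters, such as $\chi_{18}$, $\chi_{19}$, $\chi_{20}$ and the Steinberg character $\chi_{21}$, i.e.\ the genuinely ambiguous Brauer characters occurring for $\ell\mid q^2+1$ and $\ell\mid q^4+1$) I would substitute the \emph{upper} bounds for those decomposition numbers, obtaining a lower bound $\phi_i(1)\ge\chi_i(1)-\sum_{j<i}d_{ij}^{\max}\phi_j(1)$, and verify that it still exceeds $d_0(G)$, uniformly in the unknowns.

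I expect this last verification to be the main obstacle. As anticipated in the introduction, some unipotent characters in the $13$-element family have degrees that are polynomials in $q$ only modestly smaller than $\chi_{21}(1)=q^{24}$, while the cuspidal Brauer characters one must subtract off appear with coefficients for which only polynomial upper bounds of non-negligible degree are available; the resulting lower bounds for the corresponding $\phi_i(1)$ are therefore differences of large polynomials, and one must check carefully that each of them remains strictly above $d_0(G)$ for every $q=\sqrt{2^{2n+1}}$ with $n\ge 1$ --- occasionally drawing on the sharper information in Corollaries~\ref{cor:decnum3q8} and~\ref{cor:decnum4q8} for the smallest group ${^2F_4}(8)$, and if necessary feeding the positivity $\phi_i(1)>0$ back in to improve a coefficient bound. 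Once all these inequalities are established, the only irreducible Brauer characters of degree $d_0(G)$ are $\breve\chi_2$ and $\breve\chi_3$, and the statement about the two $kG$-modules follows at once.
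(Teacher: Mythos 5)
Your outline follows the paper's strategy closely: reduce via L\"ubeck's result to the four cases of~\eqref{eq:4cases}, dispose of the non-unipotent blocks through the Jordan decomposition, and for the unipotent blocks turn the unitriangular decomposition matrices plus the bounds of Theorems~\ref{thm:decnum1}--\ref{thm:decnum4} into lower bounds for the unknown degrees. Your handling of $\phi_{18}$, $\phi_{19}$, $\phi_{20}$ (substitute the upper bounds for the unknown entries, and an upper bound for $\phi_{18}(1)$ into the formula for $\phi_{19}(1)$) is exactly what the paper does. One small slip: for $s$ of type $g_5$ the characters in $\mathcal{E}(G,s_5)$ have defect $0$ only when $\ell\nmid q^2+1$; for $3\neq\ell\mid q^2+1$ the block contains the undetermined entry $a'$, although its three Brauer character degrees are still easily seen to exceed $d_0(G)$ using $a'\le\frac{q^2-\sqrt{2}q}{4}$.

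The genuine gap is in the step you yourself flag as the main obstacle, namely $\deg(\phi_{21})>d_0(G)$ for $\ell\mid q^2\pm\sqrt{2}q+1$. Independently replacing each unknown decomposition number by its upper bound provably fails there: with $x\le\frac{q}{\sqrt{2}}$ and $\phi_{19}(1)+\phi_{20}(1)$ of order $\sqrt{2}\,q^{23}$, the single term $x\cdot(\phi_{19}(1)+\phi_{20}(1))$ already exhausts the leading term $q^{24}$ of $\chi_{21}(1)$, and after further subtracting $w\cdot\phi_{18}(1)\approx\frac{q^{24}}{4}$ the resulting ``lower bound'' is negative --- the paper states this explicitly. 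Feeding back the positivity $\phi_i(1)>0$ does not repair this, because the problem is not the size of the individual bounds but their independence: one needs to know that $x$, $u$, $w$, $t$, $r$, $v$ cannot all be large at once. The paper extracts precisely such coupled constraints from the decompositions of the projective characters $\Psi_{13}'$, $\Psi_{18}'$, $\Psi_{9}$, $\Psi_{11}$, $\Psi_{17}$ into PIMs (e.g.\ $\frac{q}{\sqrt{2}}h+A=\frac{q^2+\sqrt{2}q}{4}$ and $\frac{q}{\sqrt{2}}u+Ax+B=\frac{\sqrt{2}q(q^2+3\sqrt{2}q+4)}{24}$ with $A,B\ge 0$, giving the lower bound~\eqref{eq:lb_u} for $-\frac{u}{2}$ in terms of $x$ and $h$), substitutes these relations into the explicit polynomial~\eqref{eq:d21} for $\deg(\phi_{21})$ \emph{before} optimizing over the remaining unknowns, and treats $n=1$ by a separate numerical computation using Corollary~\ref{cor:decnum3q8}. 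Without this ingredient --- or some substitute for it --- your argument does not establish $\deg(\phi_{21})>d_0(G)$.
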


Before we start with the proof of the theorem, we describe the general
strategy: The decomposition matrices in Appendices~C and D show that 
$\breve{\chi}_2$ and $\breve{\chi}_3$ are irreducible Brauer
characters of degree $d_0(G)$. Furthermore, these decomposition
matrices determine almost all irreducible Brauer characters of $G$ and
in particular their degrees, see Remark~\ref{sub:rmkdecmat}~(c).  
We show that the remaining unknown degrees are larger than $d_0(G)$.  
To achieve this, we use techniques similar to those for Steinberg's
triality groups in~\cite{Himstedt3D4Decomp}: The lower and upper
bounds for the unknown decomposition numbers in
Theorems~\ref{thm:decnum1}, \ref{thm:decnum2}, \ref{thm:decnum3},
\ref{thm:decnum4} lead to lower bounds for the unknown degrees and
eventually show, that these degrees are larger than $d_0(G)$.

However, there are significant differences to~\cite{Himstedt3D4Decomp}
which are due to the fact that the degrees of the ordinary unipotent
characters $\chi_{18}$, $\chi_{19}$, $\chi_{20}$, $\chi_{21}$
are ``asymptotically close together''. Let us have a closer
look at the case $\ell \, | \, q^2+\sqrt{2}q+1$ and the unknown
degree of the modular Steinberg character $\phi_{21}$. From the
decomposition matrix Table~\ref{tab:decuni3}, we get:
\begin{equation} \label{eq:lowerbound}
\deg(\phi_{21}) = \chi_{21}(1) - x \cdot (\phi_{19}(1)+\phi_{20}(1)) - 
w \cdot \phi_{18}(1) - v \cdot \phi_{17}(1) - \dots - \phi_{2}(1).
\end{equation}
By the decomposition matrix Table~\ref{tab:decuni3} we know the degrees
$\phi_1(1), \dots, \phi_{17}(1)$. Assume that we have already
proved sufficiently good lower and upper bounds for the unknown degrees
$\phi_{18}(1)$, $\phi_{19}(1)$, $\phi_{20}(1)$. Plugging in the upper
bounds for the decomposition numbers $r$, $s$, $t$, $u$, $v$, $w$, $x$
given in Theorem~\ref{thm:decnum3} and the upper bounds 
for $\phi_{18}(1)$, $\phi_{19}(1)$, $\phi_{20}(1)$ into the right hand
side of (\ref{eq:lowerbound}) leads to a lower bound for 
$\deg(\phi_{21})$. Unfortunately, this bound turns out to be negative
and does not give any information at all. To overcome this 
difficulty, we use certain dependencies between the various
decomposition numbers which are derived from projective characters.

\begin{proof}
Because of the Brauer trees in~\cite{HissHabil}, 
\cite{HissTrees2F4} and F.~L\"ubeck's result \cite{LuebeckSmlDeg}, we
only have to consider the cases
\[
\ell \, | \, q^2-1, \quad \ell \, | \, q^2+1,  \quad
\ell \, | \, q^2+\sqrt{2}q+1 , \quad \ell \, | \, q^2-\sqrt{2}q+1. 
\]
We only demonstrate the case $\ell \, | \, q^2+\sqrt{2}q+1$. The case 
$\ell \, | \, q^2-\sqrt{2}q+1$ is similar, the other cases are much
easier. 
Suppose $\ell$ is a prime dividing $q^2+\sqrt{2}q+1$. In this case,
Theorem~\ref{thm:decnum3} determines all decomposition numbers of $G$,
except for several decomposition numbers in the ordinary characters 
$\chi_{18}$, $\chi_{19}$, $\chi_{20}$, $\chi_{21}$. Thanks to the
unitriangular shape of the decomposition matrix, this gives us the
degrees of all irreducible Brauer characters of~$G$ except for 
$\phi_{18}(1)$, $\phi_{19}(1)$, $\phi_{20}(1)$, $\phi_{21}(1)$. We see
$\phi_2(1) = \phi_3(1) = d_0(G)$ and that all other known degrees are
strictly bigger than $d_0(G)$.
So, for the four unknown degrees we have to show $\phi_{18}(1)$,
$\phi_{19}(1)$, $\phi_{20}(1)$, $\phi_{21}(1) > d_0(G)$. From the decomposition matrix
Table~\ref{tab:decuni3}, we get 
\begin{equation} \label{eq:deg18}
\phi_{18}(1) = \chi_{18}(1) 
- \sum_{i=1}^3 \phi_{i}(1)
- \phi_{5}(1)
- a \cdot \phi_{9}(1)
- b \cdot (\phi_{11}(1)+\phi_{12}(1))
- c \cdot \phi_{17}(1).
\end{equation}
Plugging in the known degrees and the upper bounds for $a$, $b$, $c$
in Theorem~\ref{thm:decnum3} into the right hand side of
(\ref{eq:deg18}), we see $\deg(\phi_{18}) > d_0(G)$. Furthermore, 
plugging in $a=b=c=0$ into (\ref{eq:deg18}) we get an upper bound for
$\deg(\phi_{18})$. The proof of $\phi_{19}(1) > d_0(G)$ is similar:
From the decomposition matrix Table~\ref{tab:decuni3}, we get 
\begin{equation*}
\phi_{19}(1) = \chi_{19}(1) 
- \phi_2(1)
- d \cdot \phi_{9}(1)
- e \cdot \phi_{11}(1)
- g \cdot \phi_{12}(1)
- h \cdot \phi_{13}(1)
- i \cdot \phi_{17}(1)
- j \cdot \phi_{18}(1).
\end{equation*}
Plugging in the known degrees and the upper bounds for $d$, $e$, $g$, 
$h$, $i$, $j$ in Theorem~\ref{thm:decnum3} and the upper bound for
$\phi_{18}(1)$, we obtain 
$\deg(\phi_{19}) = \deg(\phi_{20}) > d_0(G)$. Note that $\phi_{19}$
and $\phi_{20}$ are complex conjugate to each other.

Next, we show $\deg(\phi_{21}) > d_0(G)$ which is by far the most
difficult part of the proof. We recall that the degrees of the
ordinary unipotent characters $\chi_1, \dots, \chi_{21}$ are known and
are given by polynomials in $q$. Thanks to the unitriangular shape of
the decomposition matrix~\ref{tab:decuni3}, we can express
$\deg(\phi_{21})$ as a polynomial in $q$ and the unknown decomposition
numbers $a$, $b$, $c$, \dots, $w$, $x$ and get:
\begin{eqnarray} \label{eq:d21}
\deg(\phi_{21}) & = & q^{24} -\sqrt{2} x q^{23} +(2xj\mathbf{-w}) q^{22}
\nonumber \\
&& +\left(\frac{xh}{2}
-\frac{1}{4}
+\frac{wa}{12}
\mathbf{-\frac{t}{2}}
+\frac{wb}{2}
-\frac{2xjc}{3}
+\frac{xe}{2}
+\frac{xd}{6}
-xjb \right.\\
&&\left. +\frac{2xi}{3}
\mathbf{-\frac{r}{12}}
-\frac{5xj}{2}
+\frac{xg}{2}
+\frac{5w}{4}
+\frac{wc}{3}
\mathbf{-\frac{u}{2}}
-\frac{xja}{6}
-\frac{s}{6}
\mathbf{-\frac{v}{3}}\right)q^{20} \nonumber \\
&& +P(q, a, b, c \dots, w, x), \nonumber
\end{eqnarray}
where $P(q, a, b, c \dots, w, x)$ is a polynomial in $q$ and the
unknown decomposition numbers $a$, $b$, \dots, $w$, $x$ with
coefficients in $\Q[\sqrt{2}]$ such that the degree of 
$P(q, a, b, c \dots, w, x)$, when considered as a polynomial in $q$,
is at most $19$. Note that $P(q, a, b, c \dots, w, x)$ can be given
explicitly. 

Next, we prove several inequalities between the unknown decomposition
numbers, which allow us to bound the bold face expressions
in~(\ref{eq:d21}) from below. We are going to use some of the
projective characters $\Psi_i$ and $\Psi_j'$ constructed in the proof
of Theorem~\ref{thm:decnum3}. Scalar products of these projective
characters with some ordinary characters are given in
Tables~\ref{tab:addscal} and \ref{tab:scalar3}. We write $\Phi_i$ for
the character of the PIM corresponding to $\phi_i$.

From Table~\ref{tab:addscal}, we get 
$\Psi_{13}' = \frac{q}{\sqrt{2}} \Phi_{13} + A \cdot \Phi_{19} + B
\cdot \Phi_{21} + \Phi$, where $A, B$ are non-negative integers and
$\Phi$ is a projective character belonging to non-unipotent blocks. Thus, we get:
$\frac{q}{\sqrt{2}} \cdot h + A = \frac{q^2+\sqrt{2}q}{4}$ and 
$\frac{q}{\sqrt{2}} \cdot u + A \cdot x + B = \frac{\sqrt{2}q(q^2+3\sqrt{2}q+4)}{24}$.
The first equation implies $A = \frac{q^2+\sqrt{2}q}{4} -
\frac{q}{\sqrt{2}} \cdot h$ and from the second equation, we then get 
\[
\frac{q}{\sqrt{2}} \cdot u + A \cdot x \le 
\frac{q}{\sqrt{2}} \cdot u + A \cdot x + B =
\frac{\sqrt{2}q(q^2+3\sqrt{2}q+4)}{24}. 
\]
So
\begin{equation} \label{eq:lb_u}
-\frac{u}{2} \ge \frac{\sqrt{2}Ax}{2q} -
\frac{q^2+3\sqrt{2}q+4}{24} = 
\frac{\sqrt{2}qx}{8}+\frac{x}{4}-\frac{xh}{2}-\frac{q^2}{24}-\frac{\sqrt{2}q}{8}-\frac{1}{6}. 
\end{equation}
Analogously, using the projective characters $\Psi_{18}'$,
$\Psi_{11}$, $\Psi_{9}$, $\Psi_{17}$, we obtain:
\begin{eqnarray*}
-w & \ge &
u+\frac{3\sqrt{2}qx}{4}+\frac{x}{2}-xh-2xj+\frac{\sqrt{2}x}{q}-\frac{q^2}{4}-\frac{\sqrt{2}q}{4}-1,
\end{eqnarray*}

\begin{eqnarray} \label{eq:decineq}
-\frac{t}{2} & \ge &
\frac{wq^2}{8}+\frac{w\sqrt{2}q}{8}-\frac{wb}{2}+\frac{x\sqrt{2}q^3}{8}-\frac{xe}{2}-\frac{xjq^2}{4}-\frac{xj\sqrt{2}q}{4} \nonumber \\
&& +xjb-\frac{xg}{2}-\frac{q^4}{8},\nonumber \\
-\frac{r}{12} & \ge &
\frac{wq^2}{144}+\frac{w\sqrt{2}q}{48}+\frac{w}{36}-\frac{wa}{12}+\frac{x\sqrt{2}q^3}{144}-\frac{x\sqrt{2}q}{72}-\frac{xd}{6}\\
&&
-\frac{xjq^2}{72}-\frac{xj\sqrt{2}q}{24}-\frac{xj}{18}+\frac{xja}{6}-\frac{q^4}{144}+\frac{1}{36},\nonumber \\
-\frac{v}{3} & \ge &
\frac{wq^2}{9}-\frac{2w}{9}-\frac{wc}{3}+\frac{x\sqrt{2}q^3}{9}+\frac{x\sqrt{2}q}{9}-\frac{2xi}{3}-\frac{2xjq^2}{9}\nonumber \\
&&+\frac{4xj}{9}+\frac{2xjc}{3}-\frac{q^4}{9}-\frac{2}{9}.\nonumber 
\end{eqnarray}
By replacing the bold face $-w$, $-\frac{t}{2}$, $-\frac{r}{12}$,
$-\frac{u}{2}$, $-\frac{v}{3}$ in (\ref{eq:d21}) by the right hand
sides of~(\ref{eq:lb_u}) and (\ref{eq:decineq}), we obtain a lower
bound for $\deg(\phi_{21})$. Again, this lower bound can be written as a
polynomial $Q(q,a,b,c,\dots,w,x)$ in $q$ and $a$, $b$, \dots, $x$
with coefficients in $\Q(\sqrt{2})$. We consider two cases:

\smallskip

Case 1: $n>1$. Expand the polynomial $Q(q,a,b,c,\dots,w,x)$,
and in all terms with a positive coefficient, replace $a$, $b$,
\dots, $x$ by the lower bounds in Theorem~\ref{thm:decnum3}; in all  
terms with a negative coefficient, replace $a$, $b$, \dots, 
$x$ by the upper bounds in Theorem~\ref{thm:decnum3}.
In this way, we obtain another lower bound for $\deg(\phi_{21})$ which
is now a polynomial in $q$ only, and from this bound we get
$\deg(\phi_{21}) > d_0(G)$. 

\smallskip

Case 2: $n=1$, that is $G = {^2F_4}(8)$. First, substitute $q$
by $\sqrt{8}$ in $Q(q,a,b,c,\dots,w,x)$. Then, expand this polynomial
in $a$, $b$, \dots, $w$, $x$, and in all terms with a positive
coefficient, replace $a$, $b$, \dots, $x$ by the lower bounds 
in Theorem~\ref{thm:decnum3}, in all terms with a negative coefficient,
replace $a$, $b$, \dots, $x$ by the upper bounds in
Theorem~\ref{thm:decnum3}. Note that by Theorem~\ref{thm:decnum3} and
Corollary~\ref{cor:decnum3q8}, one can use $h=j=1$, $x=2$ 
and $s \ge 2$ in this case. In this way, we obtain: 
\[
\deg(\phi_{21}) \ge \frac{11769507827}{3} > 64638 = d_0(G).
\]
So, all unknown degrees of irreducible Brauer characters are strictly
bigger than $d_0(G)$ and the claim follows.
\end{proof}

\subsection{Remarks on Theorem~\ref{thm:smalldeg}} \label{rmk:smalldeg}

\begin{enumerate}
\item[(a)] For fixed $q$, the bounds in Theorem~\ref{thm:decnuml3}
imply that the degrees of all nontrivial $3$-modular irreducible Brauer
characters $\neq \breve{\chi}_2$, $\breve{\chi}_3$ of $G$ are larger than
$d_0(G)$, except for possibly $\deg(\phi_{18})$ or $\deg(\phi_{21})$.

\item[(b)] Theorem~\ref{thm:smalldeg} improves the bounds of
V.~Landazuri, G.~Seitz, A.~Zalesskii, P.~Tiep in~\cite{LS}, \cite{SZ},
\cite{TiepGrass} for $\ell > 3$. 

\end{enumerate}

\newpage


\section*{Appendix A: Notation for characters}
\label{sec:appendixA}

\setcounter{table}{0}
\renewcommand{\thetable}{\textrm{A.\arabic{table}}}

\begin{table}[!ht]
\caption[]{Notation, degrees and families of the unipotent irreducible
characters of $G={^2F}_4(q^2)$. The notation we use is the CHEVIE
notation in the left most column. For a definition of $\phi_j$,
$\phi_j'$, $\phi_j''$ see Subsection~\ref{grpsetup}; see
also~\cite[Table~4]{HimstedtHuang2F4MaxParab}.} \label{tab:unicharsG}   

\begin{center}
\begin{tabular}{llllllc} \hline
\multicolumn{4}{c}{Notation} & Degree & Conj. & Value\\
CHEVIE & in \cite{Carter2} & in \cite{HissTrees2F4} & in
\cite{MalleUni2F4} \hspace{-0.25cm} & & Class &
\rule[-0.1cm]{0cm}{0.5cm}\\
\hline
\rule{0cm}{0.5cm}
${\chi}_1$ & $1$ & ${\xi}_1$ & ${\chi}_1$ & $1$ &&
\rule[-0.2cm]{0cm}{0.5cm}\\
\hline
\rule{0cm}{0.4cm}
${\chi}_2$    & ${^2B}_2[a],1$ & $\xi_5$  & $\chi_5$  &
$\frac{q}{\sqrt{2}}\phi_1\phi_2\phi_4^2\phi_{12}$ & $c_{1,11}$ & $-\frac{q}{\sqrt{2}}+\epsilon_4q^2$
\rule[-0.2cm]{0cm}{0.5cm}\\
\rule{0cm}{0.5cm}
${\chi}_3$ & ${^2B}_2[b],1$ & $\xi_6$ & $\chi_6$ & $\frac{q}{\sqrt{2}}\phi_1\phi_2\phi_4^{2}\phi_{12}$ & $c_{1,11}$ & $-\frac{q}{\sqrt{2}}-\epsilon_4q^2$
\rule[-0.2cm]{0cm}{0.5cm}\\
\hline
\rule{0cm}{0.4cm}
${\chi}_4$    & $\epsilon'$ & $\xi_{2}$ & $\chi_{2}$ & $q^{2}\phi_{12}\phi_{24}$ &&
\rule[-0.2cm]{0cm}{0.5cm}\\
\hline
\rule{0cm}{0.5cm}
${\chi}_5$    & $\rho_2'$ & $\xi_{9}$ & $\chi_{9}$ & $\frac{q^{4}}{4}\phi_4^{2}\phi_8''^2\phi_{12}\phi_{24}'$&&
\rule[-0.2cm]{0cm}{0.5cm}\\
\rule{0cm}{0.5cm}
${\chi}_6$    & $\rho_2''$ & $\xi_{10}$ & $\chi_{10}$ & $\frac{q^{4}}{4}\phi_4^{2}\phi_8'^2\phi_{12}\phi_{24}''$&&
\rule[-0.2cm]{0cm}{0.5cm}\\
\rule{0cm}{0.5cm}
${\chi}_7$    & $\rho_2$ & $\xi_{11}$ & $\chi_{11}$ & $\frac{q^{4}}{2}\phi_8^{2}\phi_{24}$ &&
\rule[-0.2cm]{0cm}{0.5cm}\\
\rule{0cm}{0.5cm}
${\chi}_8$    & cusp & $\xi_{12}$ & $\chi_{12}$ & $\frac{q^4}{12}\phi_1^{2}\phi_2^{2}\phi_8'^2\phi_{12}\phi_{24}'$ &&
\rule[-0.2cm]{0cm}{0.5cm}\\
\rule{0cm}{0.5cm}
${\chi}_9$    & cusp & $\xi_{13}$ & $\chi_{13}$ & $\frac{q^4}{12}\phi_1^{2}\phi_2^{2}\phi_8''^2\phi_{12}\phi_{24}''$ &&
\rule[-0.2cm]{0cm}{0.5cm}\\
\rule{0cm}{0.5cm}
${\chi}_{10}$ & cusp & $\xi_{14}$ & $\chi_{14}$ & $\frac{q^{4}}{6}\phi_1^{2}\phi_2^{2}\phi_4^{2}\phi_{24}$ &&
\rule[-0.2cm]{0cm}{0.5cm}\\
\rule{0cm}{0.5cm}
${\chi}_{11}$ & cusp & $\xi_{15}$ & $\chi_{15}$ &
$\frac{q^{4}}{4}\phi_1^{2}\phi_2^{2}\phi_4^{2}\phi_{12}\phi_{24}''$ & $c_{1,11}$ &
$-\frac{q^4}{4}-\epsilon_4\frac{q^3}{\sqrt{2}}$
\rule[-0.2cm]{0cm}{0.5cm}\\
\rule{0cm}{0.5cm}
${\chi}_{12}$ & cusp & $\xi_{16}$ & $\chi_{16}$ &
$\frac{q^{4}}{4}\phi_1^{2}\phi_2^{2}\phi_4^{2}\phi_{12}\phi_{24}''$ & $c_{1,11}$ &
$-\frac{q^4}{4}+\epsilon_4\frac{q^3}{\sqrt{2}}$
\rule[-0.2cm]{0cm}{0.5cm}\\
\rule{0cm}{0.5cm}
${\chi}_{13}$ & cusp & $\xi_{17}$ & $\chi_{17}$ &
$\frac{q^{4}}{4}\phi_1^{2}\phi_2^{2}\phi_4^{2}\phi_{12}\phi_{24}'$ &
$c_{1,11}$ & $-\frac{q^4}{4}-\epsilon_4\frac{q^3}{\sqrt{2}}$
\rule[-0.2cm]{0cm}{0.5cm}\\
\rule{0cm}{0.5cm}
${\chi}_{14}$ & cusp & $\xi_{18}$ & $\chi_{18}$ &
$\frac{q^{4}}{4}\phi_1^{2}\phi_2^{2}\phi_4^{2}\phi_{12}\phi_{24}'$ &
$c_{1,11}$ & $-\frac{q^4}{4}+\epsilon_4\frac{q^3}{\sqrt{2}}$ 
\rule[-0.2cm]{0cm}{0.5cm}\\
\rule{0cm}{0.5cm}
${\chi}_{15}$ & cusp & $\xi_{19}$ & $\chi_{19}$ &
$\frac{q^{4}}{3}\phi_1^{2}\phi_2^{2}\phi_4^{2}\phi_8^{2}$ & $c_{5,3}$ &
$\frac{q^2}{6}-\frac{1}{3} +\epsilon_4\frac{\sqrt{3}q^2}{2}$ \hspace{-0.2cm}
\rule[-0.2cm]{0cm}{0.5cm}\\
\rule{0cm}{0.5cm}
${\chi}_{16}$ & cusp & $\xi_{20}$ & $\chi_{20}$ & $\frac{q^{4}}{3}\phi_1^{2}\phi_2^{2}\phi_4^{2}\phi_8^{2}$ & $c_{5,3}$ &
$\frac{q^2}{6}-\frac{1}{3} -\epsilon_4\frac{\sqrt{3}q^2}{2}$ \hspace{-0.2cm}
\rule[-0.2cm]{0cm}{0.5cm}\\
\rule{0cm}{0.5cm}
${\chi}_{17}$ & cusp & $\xi_{21}$ & $\chi_{21}$ & $\frac{q^{4}}{3}\phi_1^{2}\phi_2^{2}\phi_{12}\phi_{24}$ &&
\rule[-0.2cm]{0cm}{0.5cm}\\
\hline
\rule{0cm}{0.4cm}
${\chi}_{18}$ & $\epsilon''$ & $\xi_{3}$ & $\chi_{3}$ & $q^{10}\phi_{12}\phi_{24}$ &&
\rule[-0.2cm]{0cm}{0.5cm}\\
\hline
\rule{0cm}{0.4cm}
${\chi}_{19}$ & ${^2B}_2[a],\epsilon$ & $\xi_{7}$ & $\chi_{7}$ &
$\frac{q^{13}}{\sqrt{2}}\phi_1\phi_2\phi_4^{2}\phi_{12}$ & $c_{1,3}$ &
$\epsilon_4\frac{q^9}{\sqrt{2}}$
\rule[-0.2cm]{0cm}{0.5cm}\\
\rule{0cm}{0.5cm}
${\chi}_{20}$ & ${^2B}_2[b],\epsilon$ & $\xi_{8}$ & $\chi_{8}$ &
$\frac{q^{13}}{\sqrt{2}}\phi_1\phi_2\phi_4^{2}\phi_{12}$ & $c_{1,3}$ &
$-\epsilon_4\frac{q^9}{\sqrt{2}}$
\rule[-0.2cm]{0cm}{0.5cm}\\
\hline
\rule{0cm}{0.4cm}
${\chi}_{21}$ & $\epsilon$ & $\xi_{4}$ & $\chi_{4}$ & $q^{24}$ &&
\rule[-0.2cm]{0cm}{0.3cm}\\
\hline
\end{tabular}
\end{center}
\end{table}

\newpage


\begin{table}[!ht]
\caption[]{Notation, degrees and numbers of the non-unipotent
  irreducible characters of $G={^2F}_4(q^2)$. Dependencies on
  parameters $k,l$ are omitted. For a definition of $\phi_j$,
  $\phi_j'$, $\phi_j''$, see Subsection~\ref{grpsetup}.} \label{tab:nonuniG} 

\begin{center}
\begin{tabular}{llll} \hline
\rule{0cm}{0.36cm}
Character & Notation & Degree & Number of \hspace{-0.2cm}\\
& in CHEVIE && Characters
\rule[-0.1cm]{0cm}{0.36cm}\\
\hline
\rule{0cm}{0.36cm}
$\chi_{2,1}$ &
${\chi}_{22}$ & 
$\phi_4^{2}\phi_8\phi_{12}\phi_{24}$ & 
$\frac{1}{2}(q^2-2)$
\rule[-0.2cm]{0cm}{0.36cm}\\
\rule{0cm}{0.36cm}
$\chi_{2,\frac{q}{\sqrt{2}}(q^2-1)_a}$ &
${\chi}_{23}$ & 
$\frac{q}{\sqrt{2}}\phi_1\phi_2\phi_4^{2}\phi_8\phi_{12}\phi_{24}$ & 
\rule[-0.2cm]{0cm}{0.36cm}\\
\rule{0cm}{0.36cm}
$\chi_{2,\frac{q}{\sqrt{2}}(q^2-1)_b}$ &
${\chi}_{24}$ & 
$\frac{q}{\sqrt{2}}\phi_1\phi_2\phi_4^{2}\phi_8\phi_{12}\phi_{24}$ & 
\rule[-0.2cm]{0cm}{0.36cm}\\
\rule{0cm}{0.36cm}
$\chi_{2,St}$ &
${\chi}_{25}$ & 
$q^{4}\phi_4^{2}\phi_8\phi_{12}\phi_{24}$ & 
\rule[-0.2cm]{0cm}{0.36cm}\\
\hline
\rule{0cm}{0.36cm}
$\chi_{3,1}$ &
${\chi}_{26}$ & 
$\phi_4\phi_8^{2}\phi_{12}\phi_{24}$ & 
$\frac{1}{2}(q^2-2)$
\rule[-0.2cm]{0cm}{0.36cm}\\
\rule{0cm}{0.36cm}
$\chi_{3,St}$ &
${\chi}_{27}$ & 
$q^{2}\phi_4\phi_8^{2}\phi_{12}\phi_{24}$ & 
\rule[-0.2cm]{0cm}{0.36cm}\\
\hline
\rule{0cm}{0.36cm}
$\chi_{4,1}$ &
${\chi}_{28}$ & 
$\phi_4^{2}\phi_8^{2}\phi_{12}\phi_{24}$ & 
$\frac{1}{16}(q^4-10q^2+16)$
\rule[-0.2cm]{0cm}{0.36cm}\\
\hline
\rule{0cm}{0.36cm}
$\chi_{5,1}$ &
${\chi}_{29}$ & 
$\phi_1\phi_2\phi_8^{2}\phi_{24}$ & 
$1$
\rule[-0.2cm]{0cm}{0.36cm}\\
\rule{0cm}{0.36cm}
$\chi_{5,q^2(q^2-1)}$ &
${\chi}_{30}$ & 
$q^{2}\phi_1^{2}\phi_2^{2}\phi_8^{2}\phi_{24}$ & 
\rule[-0.2cm]{0cm}{0.36cm}\\
\rule{0cm}{0.36cm}
$\chi_{5,St}$ &
${\chi}_{31}$ & 
$q^{6}\phi_1\phi_2\phi_8^{2}\phi_{24}$ & 
\rule[-0.2cm]{0cm}{0.36cm}\\
\hline
\rule{0cm}{0.36cm}
$\chi_{6,1}$ &
${\chi}_{32}$ & 
$\phi_1\phi_2\phi_8^{2}\phi_{12}\phi_{24}$ & 
$\frac{1}{2}(q^2-2)$
\rule[-0.2cm]{0cm}{0.36cm}\\
\rule{0cm}{0.36cm}
$\chi_{6,St}$ &
${\chi}_{33}$ & 
$q^{2}\phi_1\phi_2\phi_8^{2}\phi_{12}\phi_{24}$ & 
\rule[-0.2cm]{0cm}{0.36cm}\\
\hline
\rule{0cm}{0.36cm}
$\chi_{7,1}$ &
${\chi}_{34}$ & 
$\phi_1\phi_2\phi_4\phi_8^{2}\phi_{12}\phi_{24}$ & 
$\frac{1}{4}(q^4-2q^2)$
\rule[-0.2cm]{0cm}{0.36cm}\\
\hline
\rule{0cm}{0.36cm}
$\chi_{8,1}$ &
${\chi}_{42}$ & 
$\phi_1\phi_2\phi_4^{2}\phi_8'\phi_{12}\phi_{24}$ & 
$\frac{1}{4}(q^2-\sqrt{2}q)$
\rule[-0.2cm]{0cm}{0.36cm}\\
\rule{0cm}{0.36cm}
$\chi_{8,\frac{q}{\sqrt{2}}(q^2-1)_a}$ &
${\chi}_{43}$ & 
$\frac{q}{\sqrt{2}}\phi_1^{2}\phi_2^{2}\phi_4^{2}\phi_8'\phi_{12}\phi_{24}$ & 
\rule[-0.2cm]{0cm}{0.36cm}\\
\rule{0cm}{0.36cm}
$\chi_{8,\frac{q}{\sqrt{2}}(q^2-1)_b}$ &
${\chi}_{44}$ & 
$\frac{q}{\sqrt{2}}\phi_1^{2}\phi_2^{2}\phi_4^{2}\phi_8'\phi_{12}\phi_{24}$ & 
\rule[-0.2cm]{0cm}{0.36cm}\\
\rule{0cm}{0.36cm}
$\chi_{8,St}$ &
${\chi}_{45}$ & 
$q^{4}\phi_1\phi_2\phi_4^{2}\phi_8'\phi_{12}\phi_{24}$ & 
\rule[-0.2cm]{0cm}{0.36cm}\\
\hline
\rule{0cm}{0.36cm}
$\chi_{9,1}$ &
${\chi}_{35}$ & 
$\phi_1\phi_2\phi_4^{2}\phi_8\phi_8'\phi_{12}\phi_{24}$ & 
$\frac{1}{8}(q^4-\sqrt{2}q^3-2q^2+2\sqrt{2}q)$
\rule[-0.2cm]{0cm}{0.36cm}\\
\hline
\rule{0cm}{0.36cm}
$\chi_{10,1}$ &
${\chi}_{46}$ & 
$\phi_1\phi_2\phi_4^{2}\phi_8''\phi_{12}\phi_{24}$ & 
$\frac{1}{4}(q^2+\sqrt{2}q)$
\rule[-0.2cm]{0cm}{0.36cm}\\
\rule{0cm}{0.36cm}
$\chi_{10,\frac{q}{\sqrt{2}}(q^2-1)_a}$ \hspace{-0.2cm} &
${\chi}_{47}$ & 
$\frac{q}{\sqrt{2}}\phi_1^{2}\phi_2^{2}\phi_4^{2}\phi_8''\phi_{12}\phi_{24}$ & 
\rule[-0.2cm]{0cm}{0.36cm}\\
\rule{0cm}{0.36cm}
$\chi_{10,\frac{q}{\sqrt{2}}(q^2-1)_b}$ \hspace{-0.2cm} &
${\chi}_{48}$ & 
$\frac{q}{\sqrt{2}}\phi_1^{2}\phi_2^{2}\phi_4^{2}\phi_8''\phi_{12}\phi_{24}$ & 
\rule[-0.2cm]{0cm}{0.36cm}\\
\rule{0cm}{0.36cm}
$\chi_{10,St}$ &
${\chi}_{49}$ & 
$q^{4}\phi_1\phi_2\phi_4^{2}\phi_8''\phi_{12}\phi_{24}$ & 
\rule[-0.2cm]{0cm}{0.36cm}\\
\hline
\rule{0cm}{0.36cm}
$\chi_{11,1}$ &
${\chi}_{36}$ & 
$\phi_1\phi_2\phi_4^{2}\phi_8\phi_8''\phi_{12}\phi_{24}$ & 
$\frac{1}{8}(q^4+\sqrt{2}q^3-2q^2-2\sqrt{2}q)$
\rule[-0.2cm]{0cm}{0.36cm}\\
\hline
\rule{0cm}{0.36cm}
$\chi_{12,1}$ &
${\chi}_{37}$ & 
$\phi_1^{2}\phi_2^{2}\phi_4^{2}\phi_8\phi_{12}\phi_{24}$ & 
$\frac{1}{16}(q^4-2q^2)$
\rule[-0.2cm]{0cm}{0.36cm}\\
\hline
\rule{0cm}{0.36cm}
$\chi_{13,1}$ &
${\chi}_{50}$ & 
$\phi_1^{2}\phi_2^{2}\phi_4^{2}\phi_8'^{2}\phi_{12}\phi_{24}$ & 
$\frac{1}{96}(q^4-2\sqrt{2}q^3-2q^2+4\sqrt{2}q)$
\rule[-0.2cm]{0cm}{0.36cm}\\
\hline
\rule{0cm}{0.36cm}
$\chi_{14,1}$ &
${\chi}_{51}$ & 
$\phi_1^{2}\phi_2^{2}\phi_4^{2}\phi_8''^{2}\phi_{12}\phi_{24}$ & 
$\frac{1}{96}(q^4+2\sqrt{2}q^3-2q^2-4\sqrt{2}q)$
\rule[-0.2cm]{0cm}{0.36cm}\\
\hline
\rule{0cm}{0.36cm}
$\chi_{15,1}$ &
${\chi}_{38}$ & 
$\phi_1^{2}\phi_2^{2}\phi_8^{2}\phi_{12}\phi_{24}$ & 
$\frac{1}{48}(q^4-10q^2+16)$
\rule[-0.2cm]{0cm}{0.36cm}\\
\hline
\rule{0cm}{0.36cm}
$\chi_{16,1}$ &
${\chi}_{39}$ & 
$\phi_1^{2}\phi_2^{2}\phi_4^{2}\phi_8^{2}\phi_{24}$ & 
$\frac{1}{6}(q^4-q^2-2)$
\rule[-0.2cm]{0cm}{0.36cm}\\
\hline
\rule{0cm}{0.36cm}
$\chi_{17,1}$ &
${\chi}_{40}$ & 
$\phi_1^{2}\phi_2^{2}\phi_4^{2}\phi_8^{2}\phi_{12}\phi_{24}'$ & 
$\frac{1}{12}(q^4+q^2-\sqrt{2}q^3-\sqrt{2}q)$
\rule[-0.2cm]{0cm}{0.36cm}\\
\hline
\rule{0cm}{0.36cm}
$\chi_{18,1}$ &
${\chi}_{41}$ & 
$\phi_1^{2}\phi_2^{2}\phi_4^{2}\phi_8^{2}\phi_{12}\phi_{24}''$ & 
$\frac{1}{12}(q^4+\sqrt{2}q^3+q^2+\sqrt{2}q)$
\rule[-0.2cm]{0cm}{0.36cm}\\
\hline
\end{tabular}
\end{center}
\end{table}

\newpage


\begin{landscape}
\section*{Appendix B: Scalar products, projective characters and relations}
\label{sec:appendixB}

\setcounter{table}{0}
\renewcommand{\thetable}{\textrm{B.\arabic{table}}}

\begin{table}[!ht]
\caption[]{Scalar products of the unipotent irreducible characters of
$G$ with some projective characters for 
$3 \neq \ell \, | \, q^2+1$. See Table~\ref{tab:proj2} for a
construction of these projectives.} \label{tab:scalar2} 

\begin{center}
 
\end{center}
\end{landscape}

\newpage


\section*{Appendix D: Decomposition numbers of non-unipotent characters}
\label{sec:appendixD}

\setcounter{table}{0}
\renewcommand{\thetable}{\textrm{D.\arabic{table}}}

\begin{table}[!ht]
\caption[]{The decomposition numbers of the irreducible
characters in $\mathcal{E}(G, s_2) = \{\chi_{2,1},
\chi_{2,\frac{q}{\sqrt{2}}(q^2-1)_a}, \chi_{2,\frac{q}{\sqrt{2}}(q^2-1)_b}, 
\chi_{2,St}\}$ where $s_2 \in G^*$ is a semisimple $\ell'$-element 
of type~$g_2$.}  
\label{tab:dec2}

\begin{center}
\begin{tabular*}{12.5cm}{
l@{\extracolsep\fill}|cccc|cccc|cccc
}
\hline
\rule{0cm}{0.4cm}
& \multicolumn{4}{l|}{$\ell \, | \, q^2+\sqrt{2}q+1$} &
  \multicolumn{4}{l|}{$\ell \, | \, q^2-\sqrt{2}q+1$} &
  \multicolumn{4}{l}{otherwise}\\
& \hspace{-0.15cm} $\phi_{2,1}$ \hspace{-0.15cm} & \hspace{-0.15cm} $\phi_{2,2}$ \hspace{-0.15cm} & \hspace{-0.15cm} $\phi_{2,3}$ \hspace{-0.15cm} & \hspace{-0.15cm} $\phi_{2,4}$ \hspace{-0.15cm}
& \hspace{-0.15cm} $\phi_{2,1}$ \hspace{-0.15cm} & \hspace{-0.15cm} $\phi_{2,2}$ \hspace{-0.15cm} & \hspace{-0.15cm} $\phi_{2,3}$ \hspace{-0.15cm} & \hspace{-0.15cm} $\phi_{2,4}$ \hspace{-0.15cm}
& \hspace{-0.15cm} $\phi_{2,1}$ \hspace{-0.15cm} & \hspace{-0.15cm} $\phi_{2,2}$ \hspace{-0.15cm} & \hspace{-0.15cm} $\phi_{2,3}$ \hspace{-0.15cm} & \hspace{-0.15cm} $\phi_{2,4}$ \hspace{-0.15cm}
\rule[-0.2cm]{0cm}{0.36cm}\\
\hline
\rule{0cm}{0.36cm}
$\chi_{2,1}$ & $1$ & $.$ & $.$ & $.$ & $1$ & $.$ & $.$ & $.$ & $1$ & $.$ & $.$ & $.$ 
\rule[-0.2cm]{0cm}{0.36cm}\\
\rule{0cm}{0.36cm}
$\chi_{2,\frac{q}{\sqrt{2}}(q^2-1)_a}$ & $.$ & $1$ & $.$ & $.$ & $.$ &
$1$ & $.$ & $.$ & $.$ & $1$ & $.$ & $.$ 
\rule[-0.2cm]{0cm}{0.36cm}\\
\rule{0cm}{0.36cm}
$\chi_{2,\frac{q}{\sqrt{2}}(q^2-1)_b}$ & $.$ & $.$ & $1$ & $.$ & $.$ &
$.$ & $1$ & $.$ & $.$ & $.$ & $1$ & $.$ 
\rule[-0.2cm]{0cm}{0.36cm}\\
\rule{0cm}{0.36cm}
$\chi_{2,St}$ & $1$ & $1$ & $1$ & $1$ & $1$ & $.$ & $.$ & $1$ & $.$ &
$.$ & $.$ & $1$ 
\rule[-0.2cm]{0cm}{0.36cm}\\
\hline
\end{tabular*} 
\end{center}
\end{table}


\begin{table}[!ht]
\caption[]{The decomposition numbers of the irreducible characters
in $\mathcal{E}(G, s_3) = \{\chi_{3,1}, \chi_{3,St}\}$ where
$s_3 \in G^*$ is a semisimple $\ell'$-element of type~$g_3$.} \label{tab:dec3}

\begin{center}
\begin{tabular*}{12.5cm}{
l@{\extracolsep\fill}|cc|cc|cc
}
\hline
\rule{0cm}{0.4cm}
& \multicolumn{2}{l|}{$\ell \, | \, q^2-1$} & 
  \multicolumn{2}{l|}{$\ell \, | \, q^2+1$} & 
  \multicolumn{2}{l}{otherwise}\\
& $\phi_{3,1}$ & $\phi_{3,2}$ & 
  $\phi_{3,1}$ & $\phi_{3,2}$ & 
  $\phi_{3,1}$ & $\phi_{3,2}$ 
\rule[-0.2cm]{0cm}{0.36cm}\\
\hline
\rule{0cm}{0.36cm}
$\chi_{3,1}$ \hspace{0.8cm} & $1$ & $.$ & $1$ & $.$ & $1$ & $.$ 
\rule[-0.2cm]{0cm}{0.36cm}\\
\rule{0cm}{0.36cm}
$\chi_{3,St}$ & $.$ & $1$ & $1$ & $1$ & $.$ & $1$
\rule[-0.2cm]{0cm}{0.36cm}\\
\hline
\end{tabular*} 
\end{center}
\end{table}


\begin{table}[!ht]
\caption[]{The decomposition numbers of the irreducible characters~in
$\mathcal{E}_\ell(G, s_5) = \{\chi_{5,1}, \chi_{5,q^2(q^2-1)},
\chi_{5,St}, \chi_{6,1}, \chi_{6,St}, \chi_{15,1}\}$ where $s_5 \in G^*$ is a
semisimple $\ell'$-element of type~$g_5$ and 
$3 \neq \ell \, | \, q^2+1$. In the right most column, $\ell^f$ is the 
largest power of $\ell$ dividing $q^2+1$.} 
\label{tab:decg5noncyc} 

\begin{center}
\begin{tabular*}{12.5cm}{
l@{\extracolsep\fill}|ccc|l
}
\hline
\rule{0cm}{0.4cm}
& \multicolumn{3}{l|}{$3 \neq \ell \, | \, q^2+1$} & \\
& $\phi_{5,1}$ & $\phi_{5,2}$ & $\phi_{5,3}$ & Number
\rule[-0.2cm]{0cm}{0.36cm}\\
\hline
\rule{0cm}{0.36cm}
$\chi_{5,1}$ & $1$ & $.$ & $.$ & $1$
\rule[-0.2cm]{0cm}{0.36cm}\\
\rule{0cm}{0.36cm}
$\chi_{5,q^2(q^2-1)}$ & $.$ & $1$ & $.$ & $1$
\rule[-0.2cm]{0cm}{0.36cm}\\
\rule{0cm}{0.36cm}
$\chi_{5,St}$ & $1$ & $a'$ & $1$ & $1$
\rule[-0.2cm]{0cm}{0.36cm}\\
\hline
\rule{0cm}{0.4cm}
$\{\chi_{6,1}\}$ & $1$ & $1$ & $.$ & $\ell^f-1$
\rule[-0.2cm]{0cm}{0.36cm}\\
\rule{0cm}{0.36cm}
$\{\chi_{6,St}\}$ & $1$ & $a'-1$ & $1$ & $\ell^f-1$
\rule[-0.2cm]{0cm}{0.36cm}\\
\hline
\rule{0cm}{0.4cm}
$\{\chi_{15,1}\}$ & $.$ & $a'-2$ & $1$ & $\frac{1}{6}(\ell^f-1)(\ell^f-2)$
\rule[-0.2cm]{0cm}{0.36cm}\\
\hline
\end{tabular*} 
\end{center}
\end{table}

\newpage


\begin{table}[!ht]
\caption[]{The decomposition numbers of the irreducible characters
in $\mathcal{E}(G, s_5) = \{\chi_{5,1}, \chi_{5,q^2(q^2-1)},
\chi_{5,St}\}$ where $s_5 \in G^*$ is a semisimple $\ell'$-element of
type~$g_5$ and $\ell \nmid q^2+1$.} \label{tab:decg5cyc}

\begin{center}
\begin{tabular*}{12.5cm}{
l@{\extracolsep\fill}|ccc|ccc|ccc
}
\hline
\rule{0cm}{0.4cm}
& \multicolumn{3}{l|}{$\ell \, | \, q^2-1$} & 
  \multicolumn{3}{l|}{$\ell \, | \, q^4-q^2+1$}&
  \multicolumn{3}{l}{otherwise}\\ 
& $\phi_{5,1}$ & $\phi_{5,2}$ & $\phi_{5,3}$ 
& $\phi_{5,1}$ & $\phi_{5,2}$ & $\phi_{5,3}$ 
& $\phi_{5,1}$ & $\phi_{5,2}$ & $\phi_{5,3}$ 
\rule[-0.2cm]{0cm}{0.36cm}\\
\hline
\rule{0cm}{0.36cm}
$\chi_{5,1}$ & $1$ & $.$ & $.$ & $1$ & $.$ & $.$ & $1$ & $.$ & $.$ 
\rule[-0.2cm]{0cm}{0.36cm}\\
\rule{0cm}{0.36cm}
$\chi_{5,q^2(q^2-1)}$ & $.$ & $1$ & $.$ & $.$ & $1$ & $.$ & $.$ & $1$ & $.$ 
\rule[-0.2cm]{0cm}{0.36cm}\\
\rule{0cm}{0.36cm}
$\chi_{5,St}$ & $.$ & $.$ & $1$ & $1$ & $.$ & $1$ & $.$ & $.$ & $1$ 
\rule[-0.2cm]{0cm}{0.36cm}\\
\hline
\end{tabular*} 
\end{center}
\end{table}


\begin{table}[!ht]
\caption[]{The decomposition numbers of the irreducible characters
in $\mathcal{E}(G, s_6) = \{\chi_{6,1}, \chi_{6,St}\}$ where 
$s_6 \in G^*$ is a semisimple $\ell'$-element of type~$g_6$.} \label{tab:dec6}

\begin{center}
\begin{tabular*}{12.5cm}{
l@{\extracolsep\fill}|cc|cc|cc
}
\hline
\rule{0cm}{0.4cm}
& \multicolumn{2}{l|}{$\ell \, | \, q^2-1$} & 
  \multicolumn{2}{l|}{$\ell \, | \, q^2+1$} & 
  \multicolumn{2}{l}{otherwise}\\
& $\phi_{6,1}$ & $\phi_{6,2}$ & 
  $\phi_{6,1}$ & $\phi_{6,2}$ & 
  $\phi_{6,1}$ & $\phi_{6,2}$ 
\rule[-0.2cm]{0cm}{0.36cm}\\
\hline
\rule{0cm}{0.36cm}
$\chi_{6,1}$ \hspace{0.8cm} & $1$ & $.$ & $1$ & $.$ & $1$ & $.$
\rule[-0.2cm]{0cm}{0.36cm}\\
\rule{0cm}{0.36cm}
$\chi_{6,St}$ & $.$ & $1$ & $1$ & $1$ & $.$ & $1$
\rule[-0.2cm]{0cm}{0.36cm}\\
\hline
\end{tabular*} 
\end{center}
\end{table}


\begin{table}[!ht]
\caption[]{The decomposition numbers of the irreducible characters in
$\mathcal{E}(G, s_8) = \{\chi_{8,1},
\chi_{8,\frac{q}{\sqrt{2}}(q^2-1)_a}, \chi_{8,\frac{q}{\sqrt{2}}(q^2-1)_b}, 
\chi_{8,St}\}$ where $s_8 \in G^*$ is a semisimple $\ell'$-element of
type~$g_8$.}  
\label{tab:dec8}

\begin{center}
\begin{tabular*}{12.5cm}{
l@{\extracolsep\fill}|cccc|cccc|cccc
}
\hline
\rule{0cm}{0.4cm}
& \multicolumn{4}{l|}{$\ell \, | \, q^2+\sqrt{2}q+1$} & 
  \multicolumn{4}{l|}{$\ell \, | \, q^2-\sqrt{2}q+1$} &
  \multicolumn{4}{l}{otherwise}\\
& \hspace{-0.15cm} $\phi_{8,1}$ \hspace{-0.15cm} & \hspace{-0.15cm} $\phi_{8,2}$ \hspace{-0.15cm} & \hspace{-0.15cm} $\phi_{8,3}$ \hspace{-0.15cm} & \hspace{-0.15cm} $\phi_{8,4}$ \hspace{-0.15cm}
& \hspace{-0.15cm} $\phi_{8,1}$ \hspace{-0.15cm} & \hspace{-0.15cm} $\phi_{8,2}$ \hspace{-0.15cm} & \hspace{-0.15cm} $\phi_{8,3}$ \hspace{-0.15cm} & \hspace{-0.15cm} $\phi_{8,4}$ \hspace{-0.15cm}
& \hspace{-0.15cm} $\phi_{8,1}$ \hspace{-0.15cm} & \hspace{-0.15cm} $\phi_{8,2}$ \hspace{-0.15cm} & \hspace{-0.15cm} $\phi_{8,3}$ \hspace{-0.15cm} & \hspace{-0.15cm} $\phi_{8,4}$ \hspace{-0.15cm}
\rule[-0.2cm]{0cm}{0.36cm}\\
\hline
\rule{0cm}{0.36cm}
$\chi_{8,1}$ & $1$ & $.$ & $.$ & $.$ & $1$ & $.$ & $.$ & $.$ & $1$ & $.$ & $.$ & $.$ 
\rule[-0.2cm]{0cm}{0.36cm}\\
\rule{0cm}{0.36cm}
$\chi_{8,\frac{q}{\sqrt{2}}(q^2-1)_a}$ & $.$ & $1$ & $.$ & $.$ & $.$ &
$1$ & $.$ & $.$ & $.$ & $1$ & $.$ & $.$ 
\rule[-0.2cm]{0cm}{0.36cm}\\
\rule{0cm}{0.36cm}
$\chi_{8,\frac{q}{\sqrt{2}}(q^2-1)_b}$ & $.$ & $.$ & $1$ & $.$ & $.$ &
$.$ & $1$ & $.$ & $.$ & $.$ & $1$ & $.$ 
\rule[-0.2cm]{0cm}{0.36cm}\\
\rule{0cm}{0.36cm}
$\chi_{8,St}$ & $1$ & $1$ & $1$ & $1$ & $1$ & $.$ & $.$ & $1$ & $.$ & $.$ & $.$ & $1$ 
\rule[-0.2cm]{0cm}{0.36cm}\\
\hline
\end{tabular*} 
\end{center}
\end{table}


\begin{table}[!ht]
\caption[]{The decomposition numbers of the irreducible
characters in $\mathcal{E}(G,s_{10}) = \{\chi_{10,1},  
\chi_{10,\frac{q}{\sqrt{2}}(q^2-1)_a}, \chi_{10,\frac{q}{\sqrt{2}}(q^2-1)_b}, \chi_{10,St}\}$ where
$s_{10}\in~G^*$ is a semisimple $\ell'$-element of type~$g_{10}$.}   
\label{tab:dec10}

\begin{center}
\begin{tabular*}{12.5cm}{
l@{\extracolsep\fill}|cccc|cccc|cccc
}
\hline
\rule{0cm}{0.4cm}
& \multicolumn{4}{l|}{$\ell \, | \, q^2+\sqrt{2}q+1$} & 
  \multicolumn{4}{l|}{$\ell \, | \, q^2-\sqrt{2}q+1$} &
  \multicolumn{4}{l}{otherwise}\\
& \hspace{-0.22cm} $\phi_{10,1}$ \hspace{-0.22cm} & \hspace{-0.22cm}
  $\phi_{10,2}$ \hspace{-0.22cm} & \hspace{-0.22cm}
  $\phi_{10,3}$ \hspace{-0.22cm} & \hspace{-0.22cm}
  $\phi_{10,4}$ \hspace{-0.22cm} 
& \hspace{-0.22cm} $\phi_{10,1}$ \hspace{-0.22cm} & \hspace{-0.22cm}
  $\phi_{10,2}$ \hspace{-0.22cm} & \hspace{-0.22cm}
  $\phi_{10,3}$ \hspace{-0.22cm} & \hspace{-0.22cm}
  $\phi_{10,4}$ \hspace{-0.22cm} 
& \hspace{-0.22cm} $\phi_{10,1}$ \hspace{-0.22cm} & \hspace{-0.22cm}
  $\phi_{10,2}$ \hspace{-0.22cm} & \hspace{-0.22cm}
  $\phi_{10,3}$ \hspace{-0.22cm} & \hspace{-0.22cm}
  $\phi_{10,4}$ \hspace{-0.22cm} 
\rule[-0.2cm]{0cm}{0.36cm}\\
\hline
\rule{0cm}{0.36cm}
$\chi_{10,1}$ & $1$ & $.$ & $.$ & $.$ & $1$ & $.$ & $.$ & $.$ & $1$ & $.$ & $.$ & $.$ 
\rule[-0.2cm]{0cm}{0.36cm}\\
\rule{0cm}{0.36cm}
$\chi_{10,\frac{q}{\sqrt{2}}(q^2-1)_a}$ & $.$ & $1$ & $.$ & $.$ & $.$
& $1$ & $.$ & $.$ & $.$ & $1$ & $.$ & $.$  
\rule[-0.2cm]{0cm}{0.36cm}\\
\rule{0cm}{0.36cm}
$\chi_{10,\frac{q}{\sqrt{2}}(q^2-1)_b}$ & $.$ & $.$ & $1$ & $.$ & $.$
& $.$ & $1$ & $.$ & $.$ & $.$ & $1$ & $.$ 
\rule[-0.2cm]{0cm}{0.36cm}\\
\rule{0cm}{0.36cm}
$\chi_{10,St}$ & $1$ & $1$ & $1$ & $1$ & $1$ & $.$ & $.$ & $1$ & $.$ & $.$ & $.$ & $1$ 
\rule[-0.2cm]{0cm}{0.36cm}\\
\hline
\end{tabular*} 
\end{center}
\end{table}


\begin{table}[!ht]
\caption[]{The decomposition numbers of the irreducible
characters in $\mathcal{E}(G,s_i) = \{\chi_{i,1}\}$ where
$s_i\in~G^*$ is a semisimple $\ell'$-element of type~$g_i$ and 
$i \in \{4,7,9,11,12,13,14,15,16,17,18\}$. These are the basic sets
corresponding to the regular semisimple $\ell'$-elements of $G^*$.}    
\label{tab:decnilp}

\begin{center}
\begin{tabular*}{3cm}{
l@{\extracolsep\fill}|l
}
\hline
\rule{0cm}{0.4cm}
& $\ell$ odd\\
& $\phi_{i,1}$ 
\rule[-0.2cm]{0cm}{0.36cm}\\
\hline
\rule{0cm}{0.36cm}
$\chi_{i,1}$ \hspace{0.4cm} & $1$ \hspace{0.8cm}
\rule[-0.2cm]{0cm}{0.36cm}\\
\hline
\end{tabular*} 
\end{center}
\end{table}

\newpage



\end{document}